\newtheorem{theorem}{Theorem}[section]
\newtheorem{proposition}[theorem]{Proposition}
\newtheorem{remark}[theorem]{Remark}%
\newtheorem{definition}[theorem]{Definition}%
 \numberwithin{equation}{section}
\newcommand{\R}{{\mathbb{R}}}
\newcommand{\Rn}{{\mathbb{R}^d}}
\newcommand{\pfrac}{{(-\Delta)^s_p}}
\newcommand{\Wspz}{{W^{s,p}_0(\Omega)}}
\newcommand{\Wspzz}{{W^{s,p}_0(\Omega_0)}}
\newcommand{\dt}{{\Delta t}}
\newcommand{\vpz}{{\psi_{1,\Omega_0}}}
\newcommand{\pow}[2]{{\lceil#1\rceil^{#2}}}
\begin{document}

\title[Logistic elliptic and parabolic problems]{Logistic elliptic and parabolic problems involving the fractional $p$-Laplacian}

\author[L. Constantin]{Lo\"ic Constantin}

\address{%
LMAP, UMR E2S UPPA CNRS 5142\\
Universit\'e de Pau et Pays de l'Adour\\
B\^atiment IPRA, avenue de l'universit\'e, BP 1155\\
64013 Pau\\
FRANCE}

\email{loic.constantin@univ-pau.fr}
\author[C. A. Santos]{Carlos Alberto Santos}
\address{Department of Mathematics
Universidade de Brasilia\br
Federal district\br
70910-900 Brasilia\\
BRAZIL}
\email{csantos@unb.br}
\author[G. Warnault]{Guillaume Warnault}

\address{ (Corresponding author)\\
LMAP, UMR E2S UPPA CNRS 5142\\
Universit\'e de Pau et Pays de l'Adour\\
B\^atiment IPRA, avenue de l'universit\'e, BP 1155\\
64013 Pau\\
FRANCE}

\email{guillaume.warnault@univ-pau.fr}

\subjclass{35B30, 35B44, 35R11,  35K61}
\keywords{Fractional p-laplacian, Blow-up behavior, Stabilization}
\date{23th January 2026}

\begin{abstract}
In this paper, we deal with the questions of the existence and the uniqueness of weak solutions of the following nonlocal nonlinear logistic equation 
\begin{equation*}
\left\{ \begin{array}{l l}
\pfrac u_\lambda=\lambda u_\lambda^q  - b(x)u_\lambda^r \quad \text{in} \;\Omega,\\
u_\lambda=0 \quad \text{in} \; ( \Rn \backslash \Omega), \\
u_\lambda>0 \text{ in} \; \Omega.
\end{array}\right.
\end{equation*}
We also study convergence behaviors of $u_\lambda$ with respect to $\lambda$ underlining the effect of the nonlocal operator. We then look into the associated parabolic problem establishing local and global existences, uniqueness and asymptotic behaviors such as stabilization and blow up.
\end{abstract}

\maketitle
\section{Introduction}
We study the following nonlocal logistic problem in $\Omega$ a bounded regular domain of $\mathbb R^d$, $d\geq 2$,
\begin{equation*}\label{PbLE}\tag{$\mathcal E_{\lambda}$}
\left\{\begin{array}{l l}
\pfrac u=\lambda u^q  - b(x)u^r &\text{in} \;\Omega,\\
u=0 & \text{in} \; ( \Rn \backslash \Omega), \\
u>0 &\text{in} \; \Omega ,
\end{array}\right.
\end{equation*}
where:
\begin{itemize}
    \item the operator $\pfrac$ is the $p$-fractional Laplacian defined, up to a constant, for some $p\in(1,+\infty)$ and $s\in (0,1)$ by:
$$\pfrac u(x)= 2\,P.V. \int_\Rn \frac{|u(x)-u(y)|^{p-2}(u(x)-u(y))}{|x-y|^{d+sp}}dy$$
where $P.V.$ stands for Cauchy principal value;
\item  the parameters always satisfy $q>0$, $r> p-1$ and $\lambda>0$;
\item the function $b\not\equiv 0$ is bounded and nonnegative such that $b^{-1}(\{0\})=:\overline \Omega_0$ {where $\Omega_0$ is $C^{1,1}$-open set} and the Lebesgue measure of $\Omega_0$ is positive. 
\end{itemize}

The study of nonlocal operators has recently attracted increasing attention due to their occurrence in a wide range of physical phenomena characterized by long-range interactions. Elliptic and parabolic equations involving these operators, arise in diverse fields such as finance, physics, fluid dynamics, image processing, stochastic processes of L\'evy type, phase transitions, population dynamics, optimal control and game theory (see {\it e.g.} \cite{drapaca,silling,zimmermann}). 
Logistic equations have been vastly studied for their applications in mathematical biology. Indeed logistic equations can describe population dynamics, where the solution represents a population density. In this optic, taking the term $b=0$ on $\Omega_0$ can be seen as representing a positive effect on the population only on a subset of where the population lives.
\begin{equation*}
\left\{
\begin{array}{l l}
\pfrac u=f(x,u) &\text{in }\Omega,\\
u=0 &\text{in } ( \Rn \backslash \Omega),
\end{array}\right.
\end{equation*}
The previous elliptic problem has been studied for different source term in the literature see {\it e.g.} \cite{chen, hopf, iannizzotto, ian-mug}. As for the logistic problem the local case $s=1$ and $p=2$ has been studied in \cite{garcia,lopezgomez,santos,lopezgomez2} where the authors study the case $q=1$ to get existence, uniqueness and behavior results depending on $\lambda$. 
In the case of a linear operator, the nonlocal logistic equation has been extensively studied using various definitions of the fractional operator. For instance, the authors of  \cite{Montefusco} establish existence results of the logistic equation involving the half Laplacian defined by the spectral theory.  Similar results have be obtained in \cite{Carboni,Marinelli} considering the square root of $-\Delta$. With regard to the fractional Laplacian defined above by setting $p=2$, various logistic-type problems have been studied; see {\it e.g.} \cite{Caffarelli,Chhetri,Dwivedi,Khafagy,Marinelli2,Quaas}. In particular, the authors of \cite{Chhetri,Dwivedi,Khafagy} use sub- and supersolutions methods to prove the existence of solutions, furthermore a qualitative study proving bifurcation and symmetry results has been done in  \cite{Marinelli2}.\\
Our current work builds on the results of \cite{santos,Marinelli2} by extending them to the study of logistic equations involving nonlinear, nonlocal operators.
The main differences lie in the fact that the function $b$ vanishes in a subspace of $\Omega$ and in the role of the parameter $\lambda$ in the logistic term.
In the same way, the logistic problem \eqref{PbLE} has been processed in \cite{papageorgiou} for $b=1$ or in \cite{ian-mug} for more general nonlinearities addressing questions of existence, nonexistence and uniqueness.

In this article, we deal with the questions of existence, uniqueness and qualitative behaviors of the solution depending on the parameter $\lambda$.
More precisely, we use the Mountain Pass Theorem to establish the existence of a weak solution and then by Picone type inequality we prove the uniqueness as well as a comparison principle in the subhomogeneous case. We finish the study of the elliptic problem getting convergence properties as the parameter $\lambda$ goes to critical values depending to $p$ and $q$.
The main difference between the works of \cite{garcia,lopezgomez, santos,lopezgomez2} and ours in the case $q=p-1$ is the behavior of $u_\lambda$ depending on $\lambda$. In fact in these papers the authors prove the blow up on $\Omega_0$ and convergence to a degenerate solution on $\Omega\backslash\Omega_0$, whereas in our case, the nonlocal operator implies the blow up on the whole set $\Omega$.

We then study the parabolic problem associated to \eqref{PbLE}:
\begin{equation*}\label{pbp}\tag{$\mathcal P_\lambda$}
 \left\{ \begin{array}{l l}
     \partial_t u+ \pfrac u= \lambda u^q -b(x)u^r &\text{in }Q_T,\\
      u=0 &\text{in } (0,T)\times\Omega^c,\\
      u\geq 0 &\text{in } Q_T,\\
      u(0)=u_0 &\text{in } \Omega,
    \end{array}\right.
\end{equation*}
where $Q_T=(0,T)\times \Omega$, $u_0\in \Wspz\cap L^\infty(\Omega)$ is a nonnegative function and $\lambda>0$.
Parabolic problems involving the fractional $p$-Laplacian have been vastly studied in the literature, see {\it e.g.} \cite{abdellaoui,MR4913772,tiwari,mazon,vazquez}. In \cite{tiwari} the authors study the problem with $f(x,u)-g(x,u)$ as source term. The conditions they imposed on $f,g$ differs from our source term as well as the assumptions on the initial condition. The parabolic logistic equation has also been studied in the local case $s=1$, $p=2$, {\it e.g.} \cite{pardo}. The authors study global behavior of the parabolic logistic equation for $\Omega_0$ non-smooth. The problem \eqref{pbp} has also been study for $p=2$ in \cite{Chhetri2} where the authors prove the local existence using a sub- and supersolution method for a source term of the form $\lambda(a(x)u-bu^2-h(x))$ .
Up to our knowledge the parabolic logistic equation has never been studied in details for the fractional $p$-Laplacian case.
In this paper, we prove the existence of a bounded weak solution using a discretization method similarly as in \cite{MR4913772} and uniqueness from Gronwall  Lemma. Next, in the subhomogeneous case, a comparison principle and a sub-supersolution technique yield the convergence of a global solution of the parabolic problem to the solution of \eqref{PbLE}. Finally using energy method as well as Sattinger stable and unstable sets, we show blow up of our solution.

\section{Main results}
We first begin with preliminary results. We consider the fractional Sobolev space $\Wspz$ defined as follows
$$\Wspz= \big \{ u\in W^{s,p}(\Rn) \; | \; u=0 \text{ on } \Rn \backslash \Omega \big \},$$
with the Banach norm
$$\|u\|_\Wspz = \left( \int_{\Rn\times \Rn}  \frac{|u(x)-u(y)|^p}{|x-y|^{d+sp}}\,dxdy\right)^{\frac1p}$$
where $W^{s,p}(\Rn)=\big\{ u\in L^p(\Rn) \; | \; \|u\|_\Wspz<\infty \big\}$.\\
The set $\Wspz$ is a reflexive Banach space. If $sp<d$ the embeddings $\Wspz\hookrightarrow L^m(\Omega)$ are continuous for $m\in [1,\frac{dp}{d-sp}]$ and compact for $m<p^*:=\frac{dp}{d-sp}$ (see {\it e.g.} \cite[Corollary 7.2]{rando}). Furthermore, for $sp\geq d$,  $\Wspz\hookrightarrow L^r(\Omega)$ is compact for any $r\in [1,+\infty)$. For additional properties, we refer for instance \cite{bisci, rando}.\\
By the definition of the fractional $p$-Laplacian, we have for any $u,\,v \in \Wspz$:
$$\langle\pfrac u,v\rangle=\int_{\Rn\times \Rn} \frac{\pow{u(x)-u(y)}{p-1}(v(x)-v(y))}{|x-y|^{d+sp}}\,dx dy$$
where we use the notation $\pow{t}{p-1}=|t|^{p-2}t$.\\
Let $\lambda_1(\mathcal O)$ be the first eigenvalue for a general bounded regular domain $\mathcal O$
$$\lambda_1(\mathcal O)= \inf_{v\in W^{s,p}_0(\mathcal O) \backslash \{0\}}\frac{\|v\|^p_{W^{s,p}_0(\mathcal O)}}{\|v\|^p_{L^p(\mathcal O)}},$$
then we have that the mapping $\mathcal O\mapsto\lambda_1(\mathcal O)$ is decreasing in the sense that for $\mathcal O_1\subset\mathcal O_2$ such that the Lebesgue measure of $\mathcal O_2\backslash\mathcal O_1$ is positive, we have $\lambda_1(\mathcal O_1)>\lambda_1(\mathcal O_2)$.\\
In this work, we consider the notion of weak solution of problem \eqref{PbLE} defined as follows:
\begin{definition}[Weak solution]
We call a weak solution of \eqref{PbLE} a positive function $u \in \Wspz$ such that for any $\phi \in \Wspz$
\begin{equation}\label{ws}
\langle \pfrac u , \phi \rangle =\int_\Omega(\lambda u^q  - b(x)u^r)\phi\,dx.
\end{equation}
\end{definition}
\noindent For $q\leq p-1$, we introduce the set
$$\Lambda_q=\left\{ 
\begin{array}{l l}
    \mathbb (0,+\infty) & \mbox{if $q<p-1$} \\
     (\lambda_1(\Omega), \lambda_1(\Omega_0)) &  \mbox{if $q=p-1$.}
     \end{array}\right.$$
We then recall the existence theorem involving $\Lambda_q$:
\begin{theorem}\label{thexi1}
Problem \eqref{PbLE} admits a unique weak solution $u_\lambda$ {\it i.f.f.} $\lambda\in \Lambda_q$. Furthermore, for any $\lambda \in \Lambda_q$,  $u_\lambda$ belongs to $C^s(\Rn)$ satisfying $u_\lambda \geq cd(\cdot,\Omega^c)^s$ in $\Omega$ and for $\Lambda_q\ni\mu>\lambda $, 
we have $u_{\lambda}\leq u_{\mu}$ in $\Omega$.
\end{theorem}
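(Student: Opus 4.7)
The plan is to split the proof into four parts: existence for $\lambda\in\Lambda_q$, non-existence for $\lambda\notin\Lambda_q$ (an issue only when $q=p-1$), uniqueness and the comparison principle, and finally the regularity together with the boundary lower bound.

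For existence, I would work variationally with the $C^1$ energy functional
\[
J_\lambda(u) = \frac{1}{p}\|u\|_{\Wspz}^p - \frac{\lambda}{q+1}\int_\Omega (u^+)^{q+1}\,dx + \frac{1}{r+1}\int_\Omega b(x)(u^+)^{r+1}\,dx,
\]
whose non-negative critical points are exactly the weak solutions of \eqref{PbLE}. When $q<p-1$, the subhomogeneous nonlinearity is dominated by the $p$-Dirichlet term and by the super-$(p-1)$-homogeneous damping, so $J_\lambda$ is coercive and weakly lower semicontinuous, and $J_\lambda(t\varphi)<0$ for small $t>0$ along any positive $\varphi$, yielding a non-trivial minimizer. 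When $q=p-1$ and $\lambda\in(\lambda_1(\Omega),\lambda_1(\Omega_0))$, coercivity is more delicate: on a normalised blow-up sequence, the damping term forces the weak limit to vanish on $\{b>0\}=\Omega\setminus\Omega_0$, and the strict inequality $\lambda<\lambda_1(\Omega_0)$ together with the Rayleigh quotient on $W^{s,p}_0(\Omega_0)$ recovers coercivity; non-triviality then follows from testing $J_\lambda$ at a small multiple of the first eigenfunction on $\Omega$, exploiting $\lambda>\lambda_1(\Omega)$. In both cases the strong maximum principle for $\pfrac$ promotes a non-negative minimizer to a strictly positive weak solution.

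Non-existence, only relevant when $q=p-1$, would be handled as follows. If $\lambda\leq\lambda_1(\Omega)$, testing the equation against $u_\lambda$ itself and comparing with the Rayleigh characterisation of $\lambda_1(\Omega)$ forces $\int_\Omega bu_\lambda^{r+1}\,dx=0$ and $u_\lambda$ to be proportional to the first eigenfunction on $\Omega$, contradicting $u_\lambda>0$ on $\{b>0\}$. If $\lambda\geq\lambda_1(\Omega_0)$, I would take $\vpz$ (extended by zero) as test object and apply a discrete fractional Picone inequality, combined with the Hopf-type lower bound $u_\lambda\geq c\,d(\cdot,\Omega^c)^s$, to derive a strict inequality incompatible with $\lambda\geq\lambda_1(\Omega_0)$.

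Uniqueness in both cases is expected to follow from the same discrete Picone inequality: for two solutions $u_1,u_2$, insertion of $(u_i+\varepsilon)^p/(u_j+\varepsilon)^{p-1}$ as test functions, passage to the limit $\varepsilon\to0$ (justified by the Hopf bound) and symmetrisation yield an integral identity whose sign is controlled by the strict monotonicity of $s\mapsto(\lambda s^q-b(x)s^r)/s^{p-1}$, forcing $u_1\equiv u_2$. The same machinery gives the weak comparison principle: if $\mu>\lambda$, then $u_\lambda$ is a weak subsolution of the problem with $\mu$ in place of $\lambda$, so comparison yields $u_\lambda\leq u_\mu$. Finally, $C^s$-regularity follows from first establishing $u_\lambda\in L^\infty(\Omega)$ by Moser iteration against the bounded right-hand side and then invoking the global $C^s$-regularity theory for $\pfrac$ with $L^\infty$ data, while the estimate $u_\lambda\geq c\,d(\cdot,\Omega^c)^s$ comes from the associated Hopf boundary lemma. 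The main obstacle is the Picone-based arguments in the $q=p-1$ case: the nonlocality of $\pfrac$ makes $\vpz$ (which vanishes outside $\Omega_0$) interact with $u_\lambda$ through the singular kernel across $\partial\Omega_0$, and controlling these cross-terms - both for non-existence at $\lambda=\lambda_1(\Omega_0)$ and for uniqueness - is precisely the delicate step that distinguishes this nonlocal problem from its local counterpart.
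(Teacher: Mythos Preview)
Your proposal is correct and aligns with the paper's treatment. The paper does not give a self-contained proof of this theorem: it simply cites \cite{ian-mug} (Theorem~1 and Lemma~2.3) for existence and regularity, \cite{hopf} (Theorem~1.5) for the Hopf lower bound $u_\lambda\geq c\,d(\cdot,\Omega^c)^s$, and its own Proposition~\ref{mono} for the comparison principle (hence uniqueness and the monotonicity $u_\lambda\leq u_\mu$). Your Picone-based uniqueness/comparison argument is exactly the content of Proposition~\ref{mono}, with the same test functions $(u_\varepsilon-v_\varepsilon^p/u_\varepsilon^{p-1})_-$ and $(v_\varepsilon-u_\varepsilon^p/v_\varepsilon^{p-1})_+$ and the same use of the boundary control to justify the limit $\varepsilon\to0$. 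Your variational existence sketch (direct minimisation, with coercivity recovered when $q=p-1$ via a normalised blow-up forcing the limit into $W^{s,p}_0(\Omega_0)$ and then invoking $\lambda<\lambda_1(\Omega_0)$) is a standard route to the result quoted from \cite{ian-mug}; the paper does not spell this out. The non-existence part for $q=p-1$ is not detailed in the paper either, so your testing/Picone argument for $\lambda\leq\lambda_1(\Omega)$ and $\lambda\geq\lambda_1(\Omega_0)$ is a reasonable completion; your closing caveat about the cross-boundary interaction of $\vpz$ with $u_\lambda$ through the nonlocal kernel is indeed the only genuinely delicate point, and it is handled in the literature precisely via the Hopf-type bound you invoke.
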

The existence and the regularity come from  Theorem 1 and Lemma 2.3 in \cite{ian-mug} and from Theorem 1.5 in \cite{hopf}. The comparison principle is etablished by Proposition \ref{mono}.\\
We extend the previous existence result considering the superlinear case.
\begin{theorem}\label{thexi2}
Let $q\in (p-1, p^*-1)$ and $r< q$. For any $\lambda>0$,  \eqref{PbLE} admits at least a weak solution belonging to $C^s(\Rn)$. 
\end{theorem}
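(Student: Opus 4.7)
The plan is to obtain $u_\lambda$ as a mountain pass critical point of the truncated energy
\[
J_\lambda(u) = \frac{1}{p}\|u\|_{\Wspz}^{p} \;-\; \frac{\lambda}{q+1}\int_\Omega (u^+)^{q+1}\,dx \;+\; \frac{1}{r+1}\int_\Omega b(x)(u^+)^{r+1}\,dx
\]
on $\Wspz$. Since $q+1\in (p,p^\ast)$ and $p<r+1<q+1<p^\ast$, every exponent is subcritical and compact embeddings are available. The truncation by $u^+$ is only used to force nonnegativity of limit points; once a nonnegative critical point $u$ is found, it automatically solves \eqref{PbLE} in the weak sense.

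First, I would verify the mountain pass geometry. By the Sobolev embedding $\Wspz\hookrightarrow L^{q+1}(\Omega)$,
\[
J_\lambda(u)\;\geq\;\tfrac{1}{p}\|u\|^{p}-C\|u\|^{q+1},
\]
which is bounded below by some $\alpha>0$ on a small sphere $\|u\|=\rho$, because $q+1>p$. For any fixed nonnegative $\varphi\in C_c^\infty(\Omega)\setminus\{0\}$,
\[
J_\lambda(t\varphi)\;\leq\;\tfrac{t^{p}}{p}\|\varphi\|^{p}-\tfrac{\lambda t^{q+1}}{q+1}\int_\Omega\varphi^{q+1}+\tfrac{t^{r+1}}{r+1}\|b\|_\infty\int_\Omega\varphi^{r+1}\;\xrightarrow[t\to\infty]{}-\infty
\]
(again because $q+1>p$ and $q+1>r+1$), so an element $e$ with $J_\lambda(e)<0$ and $\|e\|>\rho$ exists.

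Next, Palais--Smale: if $(u_n)$ is a $(PS)_c$ sequence, the Ambrosetti--Rabinowitz combination with $\theta=q+1$ yields
\[
J_\lambda(u_n)-\frac{1}{q+1}\langle J_\lambda'(u_n),u_n\rangle = \left(\tfrac{1}{p}-\tfrac{1}{q+1}\right)\|u_n\|^{p}+\left(\tfrac{1}{r+1}-\tfrac{1}{q+1}\right)\int_\Omega b(u_n^+)^{r+1}\,dx,
\]
in which both coefficients are strictly positive and $b\geq 0$; this gives $\|u_n\|$ bounded. Passing to a subsequence, $u_n\rightharpoonup u$ in $\Wspz$ and, by the compact subcritical embeddings, $u_n\to u$ strongly in $L^{q+1}(\Omega)$ and $L^{r+1}(\Omega)$. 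Consequently $\langle J_\lambda'(u_n)-J_\lambda'(u),\,u_n-u\rangle\to 0$ reduces to $\langle\pfrac u_n-\pfrac u,\,u_n-u\rangle\to 0$, and the $(S_+)$ property of $\pfrac$ on $\Wspz$ upgrades the convergence to strong. Hence $(PS)$ holds and the mountain pass theorem produces a critical point $u$ with $J_\lambda(u)\geq\alpha>0$, so $u\not\equiv 0$.

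It remains to check that $u$ is a genuine solution of \eqref{PbLE}. Testing $J_\lambda'(u)=0$ against $-u^-\in\Wspz$ annihilates the lower order terms (since $(u^+)^k u^-=0$ a.e.), leaving $\langle\pfrac u,\,-u^-\rangle=0$. Using the pointwise inequality
\[
|u(x)-u(y)|^{p-2}(u(x)-u(y))\bigl(u^-(x)-u^-(y)\bigr)\leq -|u^-(x)-u^-(y)|^{p},
\]
one obtains $\|u^-\|_{\Wspz}^{p}\leq 0$, so $u\geq 0$; thus $u$ is a weak solution of \eqref{PbLE} in the sense of \eqref{ws}, provided we know $u>0$. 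The $C^s(\Rn)$ regularity then follows from the boundedness statement and regularity results in \cite{ian-mug}, and strict positivity $u>0$ in $\Omega$ (in fact $u\geq c\,d(\cdot,\Omega^c)^s$) comes from the Hopf Lemma \cite[Theorem 1.5]{hopf}, exactly as invoked after Theorem \ref{thexi1}.

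The main obstacle is the Palais--Smale verification: one needs the AR-type identity above to work with \emph{both} nonlinear terms (the superlinear good term and the superlinear absorption term) simultaneously, which is precisely why the hypothesis $r<q$ is used. Everything else is a standard application of mountain pass combined with the $(S_+)$ property of $\pfrac$ and the cited regularity/Hopf results.
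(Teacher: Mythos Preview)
Your proof is correct and follows essentially the same mountain pass strategy as the paper: same geometry verification, same Ambrosetti--Rabinowitz combination (with $\theta=q+1$, using $q>r$ and $q+1>p$) for Palais--Smale, same $(S_+)$ compactness, and the same regularity/Hopf chain at the end. The only tactical differences are that you truncate with $(u^+)$ and test against $-u^-$ to force nonnegativity, whereas the paper keeps the untruncated energy and argues that the mountain pass level is unchanged when restricting to nonnegative paths (via $\||v|\|_{\Wspz}\leq\|v\|_{\Wspz}$), and for the second geometry condition the paper picks $\phi\in\Wspzz$ so that the $b$-term simply vanishes rather than being dominated via $\|b\|_\infty$.
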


\begin{remark}
The case $r<q$ is not studied in the article \cite{papageorgiou} but the proof of Theorem \ref{thexi2} still holds for positive functions $b\in L^\infty(\Omega)$.
\end{remark}

The next result provides the uniform behavior of the sequence $(u_\lambda)_\lambda$ as $\lambda$ goes to the extremities of $\Lambda_q$:
\begin{theorem}\label{unifbu}
Let the sequence $(u_\lambda)_{\lambda\in \Lambda_q}$ defined by Theorem \ref{thexi1}. Then:
\begin{enumerate}
    \item [(i)] $(u_\lambda)_\lambda$ blows up uniformly on compact subsets of $\Omega$   (\it {i.e.}  for any $K$ compact subset of $\Omega$, $\min_K u_\lambda \to +\infty$) as $\lambda\to \sup \Lambda_q$ under the additional condition $p\geq 2$ if $q<p-1$.\\ More precisely, the sequence $(u_\lambda)_\lambda$ blows up uniformly on compact subsets of $\Omega_0$ in any case without additional hypothesis. 
    \item[(ii)] $(u_\lambda)_\lambda$ converges uniformly to $0$ as $\lambda\to \inf \Lambda_q$.
\end{enumerate}
\end{theorem}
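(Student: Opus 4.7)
The overall strategy exploits the monotonicity $\lambda \mapsto u_\lambda$ from Theorem \ref{thexi1} together with the characterization of $\Lambda_q$ as the maximal set of parameters admitting a positive weak solution. Point (ii) is the easier half. By monotonicity, the pointwise limit $u_*:=\lim_{\lambda\searrow\inf\Lambda_q}u_\lambda\geq 0$ exists. When $q<p-1$, so $\inf\Lambda_q=0$, testing the weak formulation against $u_\lambda$ and using the continuous embedding $\Wspz\hookrightarrow L^{q+1}(\Omega)$ yields $\|u_\lambda\|_{\Wspz}^{p-1-q}\leq C\lambda\to 0$. When $q=p-1$, so $\inf\Lambda_q=\lambda_1(\Omega)$, any nontrivial $u_*$ would, via monotone convergence and the $(S_+)$-property of $\pfrac$, be a positive weak solution at $\lambda=\lambda_1(\Omega)\notin\Lambda_q$, contradicting Theorem \ref{thexi1}; hence $u_*\equiv 0$. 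Promoting the a.e.\ convergence to uniform convergence on $\overline\Omega$ is then standard: the $C^s$ regularity of Theorem \ref{thexi1}, combined with the uniform bound $u_\lambda\leq u_{\lambda_0}$ for some fixed $\lambda_0\in\Lambda_q$, gives equicontinuity, and Dini's theorem applied to the monotone convergence on the compact $\overline\Omega$ upgrades this to uniform convergence.

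For point (i), I first establish blow-up on compact subsets of $\Omega_0$. In the subhomogeneous case $q<p-1$, given $K\subset\Omega_0$ compact, choose an open $\omega$ with $K\subset\omega$, $\overline\omega\subset\Omega_0$, and let $\psi_\omega$ be the positive first Dirichlet eigenfunction of $\pfrac$ on $\omega$, normalized by $\|\psi_\omega\|_\infty=1$ and extended by $0$. A direct computation shows that $t\psi_\omega$ is a weak subsolution of $(\mathcal E_\lambda)$ on $\Omega$ provided $t^{p-1-q}\lambda_1(\omega)\leq\lambda$; the Picone-type comparison mentioned in the introduction then yields $u_\lambda\geq t_\lambda\psi_\omega$ with $t_\lambda\asymp\lambda^{1/(p-1-q)}\to+\infty$, so $\min_K u_\lambda\to+\infty$. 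When $q=p-1$ this construction fails, since $\lambda_1(\omega)>\lambda_1(\Omega_0)>\lambda$ for every $\omega\subset\Omega_0$. I argue instead by rescaling: first, $M_\lambda:=\|u_\lambda\|_\infty\to+\infty$, for otherwise uniform $C^s$ bounds and the $(S_+)$-property would extract a positive weak solution at $\lambda_1(\Omega_0)$, contradicting $\lambda_1(\Omega_0)\notin\Lambda_q$. Setting $v_\lambda:=u_\lambda/M_\lambda$, the rescaled equation $\pfrac v_\lambda=\lambda v_\lambda^{p-1}-bM_\lambda^{r-p+1}v_\lambda^r$ forces $v_\lambda\to 0$ on $\{b>0\}$; interior regularity on compacts of $\Omega_0$ (where $|\pfrac v_\lambda|\leq\lambda_1(\Omega_0)$) provides uniform convergence on compacts to a limit $v_*\in W^{s,p}_0(\Omega_0)$ satisfying $\pfrac v_*=\lambda_1(\Omega_0)v_*^{p-1}$, which by simplicity of the first eigenvalue equals $c\vpz$ for some $c>0$; hence $\min_K u_\lambda=M_\lambda\min_K v_\lambda\to+\infty$.

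To propagate the blow-up from $\Omega_0$ to all of $\Omega$, let $K\subset\Omega$ be compact; only points $x\in K\setminus\Omega_0$ need attention. Fix a compact $K_0\subset\Omega_0$ with $\mathrm{dist}(K_0,K)\geq\delta>0$; the previous step gives $u_\lambda\geq A_\lambda$ on $K_0$ with $A_\lambda\to+\infty$. Suppose for contradiction $u_\lambda(x_\lambda)\leq M$ along a subsequence with $x_\lambda\in K$. Splitting the principal value integral into parts where $u_\lambda(y)$ is larger or smaller than $u_\lambda(x_\lambda)$, the contribution of $K_0$ to the negative part is bounded below by $c_\delta(A_\lambda-M)^{p-1}\to+\infty$, hence
\[
\pfrac u_\lambda(x_\lambda)\leq(\text{positive part})-c_\delta(A_\lambda-M)^{p-1}\to-\infty,
\]
whereas the equation forces $\pfrac u_\lambda(x_\lambda)=\lambda u_\lambda(x_\lambda)^q-b(x_\lambda)u_\lambda(x_\lambda)^r\geq -\|b\|_\infty M^r$, bounded—contradiction. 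The extra hypothesis $p\geq 2$ when $q<p-1$ enters precisely here in controlling the positive part of the principal value integral (for $q=p-1$ the equation itself bounds it through $\lambda u_\lambda(x)^{p-1}\leq\lambda_1(\Omega_0)M^{p-1}$, whence no additional assumption is needed). This propagation — cleanly balancing the two competing contributions of the nonlocal kernel when $u_\lambda$ is bounded at $x$ but explodes nearby — is the main technical obstacle of the proof.
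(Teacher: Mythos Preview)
Your treatment of part (ii) and of the blow-up on $\Omega_0$ for $q<p-1$ is sound and close in spirit to the paper (the paper uses the solution of $\pfrac V=V^q$ on $\Omega_0$ rather than a first eigenfunction on $\omega\Subset\Omega_0$, but your subsolution works equally well). However, two genuine gaps remain.

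\textbf{Blow-up on $\Omega_0$ for $q=p-1$.} Your rescaling argument does not establish that the limit $v_*$ is nontrivial. From $\|v_\lambda\|_\infty=1$ alone you cannot conclude $v_*\not\equiv 0$: testing the equation gives $\|v_\lambda\|_\Wspz^p\leq\lambda\|v_\lambda\|_{L^p}^p$, so if $v_*\equiv 0$ then $v_\lambda\to 0$ in $\Wspz$, which is not contradictory unless $sp>d$. You would need to locate the maximum of $v_\lambda$ inside a fixed compact of $\Omega_0$ and invoke a Harnack-type estimate there, and neither step is immediate. The paper proceeds differently: it introduces the weighted eigenvalue problem $\pfrac\psi_\mu=\lambda_\mu\psi_\mu^{p-1}-\mu b\psi_\mu^{p-1}$ on $\Omega$, proves $\lambda_\mu\nearrow\lambda_1(\Omega_0)$ and $\psi_\mu\to\vpz$ in $C(\overline\Omega)$, and checks that $\mu^{1/(r-p+1)}\psi_\mu$ is a subsolution of $(\mathcal E_\lambda)$ for $\lambda\in[\lambda_\mu,\lambda_1(\Omega_0))$. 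Comparison then gives $u_\lambda\geq\mu^{1/(r-p+1)}\psi_\mu$ directly, with no rescaled limit to identify.

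\textbf{Propagation to $\Omega\setminus\Omega_0$.} Your pointwise argument evaluates $\pfrac u_\lambda(x_\lambda)$ and splits the principal value integral by the sign of $u_\lambda(x_\lambda)-u_\lambda(y)$. This is not justified: $u_\lambda$ is only $C^s$, and for the fractional $p$-Laplacian the integrand near $y=x_\lambda$ is of order $|x_\lambda-y|^{s(p-1)-d-sp}=|x_\lambda-y|^{-d-s}$, which is not integrable; the principal value exists (if at all) only through cancellation between positive and negative contributions, which your splitting destroys. In particular the ``positive part'' is not a finite quantity to be controlled. The paper avoids pointwise evaluation entirely: fixing a ball $\mathcal O\subset\Omega$ and a distant set $\omega\Subset\Omega_0$, it sets $v_\lambda=u_\lambda-u_\lambda\mathbb 1_\omega$ and uses a known identity to obtain, in the weak sense on $W^{s,p}_0(\mathcal O)$,
\[
\pfrac v_\lambda+bv_\lambda^r=\lambda u_\lambda^q+h_\lambda,\qquad h_\lambda(x)=2\int_\omega\frac{u_\lambda(x)^{p-1}-(u_\lambda(x)-u_\lambda(y))^{p-1}}{|x-y|^{d+sp}}\,dy.
\]
Since $\omega$ is at positive distance from $\mathcal O$, $h_\lambda$ has no singularity. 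The role of $p\geq 2$ (for $q<p-1$) is then transparent: the algebraic inequality $(t^{p-1}-(t-s)^{p-1})\geq c\,s^{p-1}$ valid for $p\geq 2$ gives $h_\lambda(x)\geq C(\inf_\omega u_\lambda)^{p-1}\to+\infty$; for $q=p-1$ the extra term $\lambda u_\lambda^{p-1}$ is absorbed via $(1+\alpha)t^{p-1}-(t-s)^{p-1}\geq c_\alpha s^{p-1}$, valid for all $p>1$. One then compares weakly with $Rw$, $w$ the torsion function on $\mathcal O$, testing against $(Rw-v_\lambda)_+$.
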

For $q=p-1$, this results differs from the local case. In \cite{lopezgomez},  the authors study the problem \eqref{PbLE} involving the Laplacian operator, they obtain the blow-up only in $\Omega_0$ and the convergence to the minimal large positive solution of a singular problem on $\Omega\backslash\Omega_0$. Similar results are also obtained in the case of the $p$-Laplacian operator see \cite{duguo}. In our case, the nonlocality of the operator which considers the far effects implies the blow-up in $\Omega\backslash \Omega_0$.

We now state results on the associated parabolic problem \eqref{pbp} where we consider the class of weak solutions in the following framework 
$$X_T:=\Big\{v\in  L^\infty(Q_T)\, |\; \partial_tv\in L^2(Q_T)\Big\}\cap L^\infty([0,T];\Wspz)$$
and
\begin{definition}[Weak solution]\label{defsol2}
A nonnegative function $u \in X_T$ is a weak solution of \eqref{pbp} if $u(0,\cdot)=u_0$ {\it a.e.} in $\Omega$ such that for any $t\in [0, T]$:
\begin{equation}\label{fv2}
\int_0^t\int_\Omega \partial_tu \phi \,dxd\tau+\int_0^t\langle\pfrac u,\phi\rangle\,d\tau =\int_0^t\int_{\Omega}(\lambda u^q -b(x)u^r)\phi\,dxd\tau,
\end{equation}
for any $\phi \in L^1(0,T;\Wspz)\cap L^2(Q_T)$. 
\end{definition} 
We define the operator $\mathcal A: v\mapsto \mathcal Av=\pfrac v +b(x)|v|^{r-1}v$ and its domain
\begin{equation*}
D(\mathcal A)=\{ v\in \Wspz\cap L^\infty(\Omega)\;|\;\mathcal Av\in L^\infty(\Omega)\}.
\end{equation*}
\begin{remark}\label{pmbis}
Proposition 2.1 in \cite{MR4913772} implies that the operator $\mathcal A$ satisfies the comparison principle {\it i.e.} for any $u$, $v\in W^{s,p}(\Rn)$ such that $u\leq v$ {\it a.e.} in $\Rn\backslash \Omega$ and 
$$\langle \mathcal Au,\varphi\rangle\leq \langle \mathcal Av,\varphi\rangle$$
for any $\varphi \in \Wspz$, $\varphi\geq 0$. Then $u\leq v$ {\it a.e.} in $\Omega$.
\end{remark}
We have the following theorem of existence, uniqueness and regularity.
\begin{theorem}\label{mainpara}
Let $u_0\in \Wspz\cap L^\infty(\Omega)$ be a nonnegative function. Then, there exists $\mathcal T\in (0,+\infty]$ such that for any $T< \mathcal T$, \eqref{pbp} admits a weak solution $u$ in $Q_{T}$ belonging to $C([0,T];\Wspz)$. Additionally we have
\begin{enumerate}
    \item[(i)]  if $q\leq1$  or $\big(q\in(0, p-1] \text{ and }\lambda< \sup\Lambda_q\big)$ then   $\mathcal T=+\infty$. In this case, we say that the solution is global.
    \item[(ii)] if $q\geq 1$, then the weak solution is unique and for $u_0\in D(\mathcal A)$, $u$ belongs to $C([0,\mathcal T);C(\overline \Omega)).$
    \end{enumerate}
\end{theorem}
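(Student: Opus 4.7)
The plan is to follow the time-discretization strategy of \cite{MR4913772}. Partition $[0,T]$ into $N$ equal steps of length $\dt = T/N$ and define $u^n$ inductively by solving
\[
\frac{u^n - u^{n-1}}{\dt} + \mathcal A u^n = \lambda (u^{n-1})^q \quad \text{in } \Omega,
\]
with $u^0 = u_0$ and homogeneous exterior condition. Since $v \mapsto v/\dt + \mathcal A v$ is coercive, bounded and strictly monotone on $\Wspz$, a direct minimization argument yields a unique $u^n \in \Wspz\cap L^\infty(\Omega)$; Remark \ref{pmbis} applied with the supersolution $0$ preserves $u^n \geq 0$.

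Two sets of uniform estimates are then required. For the $L^\infty$ bound one compares $u^n$ to a spatially-constant supersolution $m(t)$: taking $m$ to solve $m' = \lambda m^q$ with $m(0) = \|u_0\|_\infty$ gives a bound on $[0,\mathcal T)$ where $\mathcal T$ is the ODE blow-up time; when $q \leq 1$ the sublinear bound $m^q \leq 1+m$ gives $\mathcal T = +\infty$; and when $q \in (0,p-1]$ with $\lambda < \sup\Lambda_q$ one instead uses the stationary supersolution $K u_\mu$ coming from Theorem \ref{thexi1} for some $\mu \in (\lambda,\sup\Lambda_q)$ and $K$ large enough that $K u_\mu \geq u_0$, again giving $\mathcal T = +\infty$. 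For the energy estimate, testing the scheme with $u^n - u^{n-1}$ and exploiting the convexity inequality $\langle\pfrac u^n, u^n - u^{n-1}\rangle \geq \frac1p(\|u^n\|^p_{\Wspz} - \|u^{n-1}\|^p_{\Wspz})$ yields, after summing, uniform bounds on the piecewise-linear interpolant $\tilde u_\dt$ in $L^\infty(0,T;\Wspz)$ and on $\partial_t \tilde u_\dt$ in $L^2(Q_T)$.

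Aubin-Lions compactness then gives, up to a subsequence, $\tilde u_\dt \to u$ in $L^p(Q_T)$ and a.e., with $u \in X_T$. The polynomial source terms pass to the limit by dominated convergence thanks to the uniform $L^\infty$ bound, and the nonlinear $\pfrac$ term is treated by the Minty/monotonicity trick, which also gives strong convergence in the Gagliardo seminorm and hence $u \in C([0,T];\Wspz)$. Global existence in case (i) is immediate since the supersolutions constructed above are time-independent or globally defined in time. For uniqueness in the case $q\geq 1$, subtracting the equations for two solutions $u_1, u_2$ and testing with $w = u_1 - u_2$ gives
\[
\tfrac12\tfrac{d}{dt}\|w\|_{L^2}^2 + \langle\pfrac u_1 - \pfrac u_2,w\rangle = \int_\Omega \bigl(\lambda(u_1^q - u_2^q) - b(u_1^r - u_2^r)\bigr)w\,dx,
\]
where monotonicity of $\pfrac$ drops the second left-hand term, the Lipschitz bound $|u_1^q - u_2^q| \leq C\|u\|_\infty^{q-1}|w|$ (valid for $q\geq 1$ on bounded solutions) controls the first source, and $-b(u_1^r - u_2^r)w \leq 0$ by monotonicity of $s\mapsto s^r$ on $[0,\infty)$; Gronwall's lemma then forces $w\equiv 0$.

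For the improved regularity when $u_0 \in D(\mathcal A)$, I would take discrete time-differences $u^{n+1} - u^n$ as test functions in the scheme to propagate an $L^\infty$ estimate on $(u^{n+1} - u^n)/\dt$, obtaining in the limit $\partial_t u \in L^\infty(0,\mathcal T;L^\infty(\Omega))$; applying then the stationary $C^s$-regularity of \cite{ian-mug,iannizzotto} at each time slice to the equation $\pfrac u(t) = \lambda u(t)^q - b u(t)^r - \partial_t u(t) \in L^\infty$ yields $u(t) \in C^s(\overline\Omega)$ with continuous dependence on $t$, hence $u \in C([0,\mathcal T),C^0(\overline\Omega))$. The main obstacle will be handling the superlinear regime $q>1$: one must verify that the discrete $L^\infty$-supersolution bound survives the passage to the limit with $\mathcal T$ coinciding with the ODE blow-up time and independent of $\dt$, which requires a careful consistency argument linking the discrete and continuous maximum principles.
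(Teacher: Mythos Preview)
Your overall architecture matches the paper's: time discretization, $L^\infty$ barriers, energy estimates, Aubin--Lions, and Gronwall for uniqueness. However, there is a genuine gap in your global-existence barrier for the case $q\in(0,p-1]$, $\lambda<\sup\Lambda_q$. You propose $Ku_\mu$ with $u_\mu$ the solution of $(\mathcal E_\mu)$ on $\Omega$ and $K$ large enough that $Ku_\mu\geq u_0$. But $u_\mu$ is in $C^s(\Rn)$ with $u_\mu\leq Cd(\cdot,\Omega^c)^s$, so $Ku_\mu$ vanishes at $\partial\Omega$ like $d^s$; meanwhile $u_0\in\Wspz\cap L^\infty(\Omega)$ need not decay at that rate (for instance $u_0\sim d^{s'}$ with $s'<s$ small enough is admissible), and then no finite $K$ gives $Ku_\mu\geq u_0$ near $\partial\Omega$. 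The paper avoids this by solving $(\mathcal E_{\tilde\lambda})$ on a strictly larger domain $\widetilde{\mathcal O}\supset\overline\Omega$ (extending $b$ positively on $\widetilde{\mathcal O}\setminus\Omega$ so that $\Omega_0$ is unchanged and $\sup\Lambda_q$ is preserved): the resulting $\tilde u$ satisfies $\tilde u\geq c>0$ uniformly on $\overline\Omega$, and then $\alpha\tilde u\geq u_0$ for $\alpha$ large is immediate from $u_0\in L^\infty$.

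Two smaller points also deserve care. First, the Minty trick gives convergence of the approximations, but $u\in C([0,T];\Wspz)$ does not follow from that alone; the paper obtains it in Proposition~\ref{propReg} by first getting right-continuity from the energy inequality plus weak continuity, and then upgrading the inequality to an energy \emph{equality} by testing with the difference quotient $\tau_h u$, which yields left-continuity. Second, for the $C^0(\overline\Omega)$ regularity, testing the discrete scheme with $u^{n+1}-u^n$ produces $L^2$-type bounds, not the $L^\infty$ bound on $(u^n-u^{n-1})/\dt$ you need. The paper instead uses that $\mathcal A$ is accretive in $L^\infty$ (Proposition~\ref{accre}): subtracting the scheme at consecutive steps and invoking accretivity gives $\|u_n-u_{n-1}\|_{L^\infty}\leq(1+C\lambda\dt)\|u_{n-1}-u_{n-2}\|_{L^\infty}$, and the hypothesis $u_0\in D(\mathcal A)$ is what bounds the first increment $\|u_1-u_0\|_{L^\infty}\leq\dt\|\lambda u_0^q-\mathcal A u_0\|_{L^\infty}$.
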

\begin{remark}\label{tmax}
When $q\geq1$, the uniqueness of the weak solution gives, as in Theorem 1.5 of \cite{MR4913772}, the existence of maximum life time of the solution: 
$$T_{max}:=\sup \{ T>0 \; | \;  u \text{ is the weak solution $u$ of \eqref{pbp} on }Q_T \}.$$
And if $T_{max}<\infty$ then $||u(t)||_{L^\infty(\Omega)}\to \infty$ as $t\to T_{max}$.
\end{remark}
\begin{theorem}\label{stab1}
Let $q\leq p-1$ and $\lambda\in \Lambda_q$. Let $u_0\in \Wspz\cap L^\infty(\Omega)$ such that $u_0\geq c d(\cdot, \tilde \Omega^c)^s$ for some $\tilde \Omega\subset \Omega$ a regular open set. In addition if $q=p-1$, we assume $\Omega_0  \subset\tilde \Omega$. Let $u$ be the global solution of \eqref{pbp} obtained by Theorem \ref{mainpara}. Then, $u(t)$ converges to $u_\lambda$ in $L^m(\Omega)$ for any $m\in [1,+\infty)$ as $t\to +\infty $. Furthermore if $q>1$ and $u_0\leq Cd(\cdot, \Omega^c)^s$ then $u(t)$ converges to $u_\lambda$ as $t\to +\infty $ in $L^\infty(\Omega)$. 
\end{theorem}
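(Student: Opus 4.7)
The strategy is a sub/super-solution sandwich at the parabolic level. The stationary building blocks are $\eta u_\lambda$ for $\eta>0$: from $\pfrac u_\lambda=\lambda u_\lambda^q-bu_\lambda^r$ one computes
$$\pfrac(\eta u_\lambda)-\bigl(\lambda(\eta u_\lambda)^q-b(\eta u_\lambda)^r\bigr)=\lambda(\eta^{p-1}-\eta^q)u_\lambda^q-b(\eta^{p-1}-\eta^r)u_\lambda^r,$$
which is $\le 0$ for $\eta\le 1$ (subsolution of $(\mathcal E_\lambda)$) and $\ge 0$ for $\eta\ge 1$ (supersolution), using $q\le p-1<r$. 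The plan is then to find $0<\varepsilon\le 1\le M$ and $t_0\ge 0$ such that $\varepsilon u_\lambda\le u(t_0)\le Mu_\lambda$ a.e.\ in $\Omega$, and to run monotone parabolic flows from $\varepsilon u_\lambda$ and $Mu_\lambda$ that will squeeze $u(t+t_0)$ in the limit $t\to\infty$.

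Establishing the sandwich at time $t_0$ is the main issue. At $t_0=0$ it need not hold since $u_0$ may vanish on $\Omega\setminus\tilde\Omega$ and $u_0\in \Wspz\cap L^\infty$ does not imply $u_0\le Cd(\cdot,\Omega^c)^s$. I would therefore let the nonlocal flow regularize for a short time $t_0>0$: a parabolic strong maximum principle and Hopf-type lemma (in the spirit of \cite{hopf} and Theorem \ref{thexi1}) give $u(t_0)\ge c(t_0)\,d(\cdot,\Omega^c)^s$, the extra hypothesis $\Omega_0\subset\tilde\Omega$ when $q=p-1$ being exactly what is needed to prevent the damping $bu^r$ from destroying the initial positivity on $\Omega_0$ before the nonlocal spread populates the rest of $\Omega$. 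Dually, instantaneous $C^s$-regularization provides $u(t_0)\le C(t_0)\,d(\cdot,\Omega^c)^s$. Comparing both bounds with $u_\lambda\asymp d(\cdot,\Omega^c)^s$ from Theorem \ref{thexi1} yields the desired sandwich.

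Let $\underline u,\overline u$ denote the solutions of $(\mathcal P_\lambda)$ furnished by Theorem \ref{mainpara} with initial data $\varepsilon u_\lambda$ and $Mu_\lambda$. Because these data are stationary sub- and supersolutions, the parabolic comparison principle of Remark \ref{pmbis} (combined with the Gronwall argument from Theorem \ref{mainpara}) applied to $(\underline u(\cdot),\underline u(\cdot+h))$ and $(\overline u(\cdot+h),\overline u(\cdot))$ shows that $t\mapsto \underline u(t)$ is nondecreasing and $t\mapsto\overline u(t)$ nonincreasing, each confined to the order interval $[\varepsilon u_\lambda, Mu_\lambda]$. Monotone convergence gives $\underline u(t)\nearrow u_\ast$ and $\overline u(t)\searrow u^\ast$ in every $L^m(\Omega)$, $m\in[1,\infty)$; passing to the limit in the weak formulation of $(\mathcal P_\lambda)$ (the time-derivative term is killed via the energy identity $\int_0^\infty\|\partial_t\underline u\|_{L^2}^2\,d\tau<\infty$) identifies $u_\ast$ and $u^\ast$ as weak solutions of $(\mathcal E_\lambda)$, hence both equal to $u_\lambda$ by the uniqueness in Theorem \ref{thexi1}. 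Parabolic comparison $\underline u(t)\le u(t+t_0)\le \overline u(t)$ then concludes the $L^m$ statement.

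For the $L^\infty$ refinement, the condition $q>1$ makes $u\mapsto u^q$ locally Lipschitz and $u_0\le Cd(\cdot,\Omega^c)^s$ removes the need for the time-regularization. The envelope trajectories then admit uniform-in-$t$ $C^s(\overline\Omega)$ bounds coming from the bounded source $\lambda u^q-bu^r$ and the regularity theory of \cite{ian-mug,hopf}; the compact embedding $C^s(\overline\Omega)\hookrightarrow C(\overline\Omega)$ together with Dini's theorem on the monotone envelopes upgrades the pointwise convergence to uniform convergence, so $u(t)\to u_\lambda$ in $L^\infty(\Omega)$. The main technical obstacle remains the instantaneous sandwich: making the parabolic Hopf and the dual $d^s$-decay estimate quantitative in $t_0$ starting from a merely $\Wspz\cap L^\infty$ initial datum, and isolating precisely the role of the hypothesis $\Omega_0\subset\tilde\Omega$ at the borderline $q=p-1$.
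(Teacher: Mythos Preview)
Your sandwich idea with $\eta u_\lambda$ is natural but, as you yourself flag at the end, it rests on two parabolic regularity statements that are \emph{not} available here: an instantaneous Hopf lower bound $u(t_0)\ge c(t_0)\,d(\cdot,\Omega^c)^s$ and an instantaneous $C^s$ upper bound $u(t_0)\le C(t_0)\,d(\cdot,\Omega^c)^s$, both starting from an initial datum that is merely in $\Wspz\cap L^\infty$. Neither is proved in the paper, and for the nonlinear nonlocal parabolic flow these are substantial results in their own right. The paper sidesteps this entirely by a domain trick: the \emph{subsolution} is taken to be $\epsilon w_\lambda$ where $w_\lambda$ solves $(\mathcal E_\lambda)$ on the \emph{smaller} set $\tilde\Omega$; since $w_\lambda\le C d(\cdot,\tilde\Omega^c)^s$ and the hypothesis gives $u_0\ge c\,d(\cdot,\tilde\Omega^c)^s$, one has $\epsilon w_\lambda\le u_0$ directly at $t=0$ (this is also where $\Omega_0\subset\tilde\Omega$ enters, to guarantee the elliptic solution on $\tilde\Omega$ exists when $q=p-1$). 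The \emph{supersolution} is taken on \emph{enlarged} domains $\mathcal O_n\supset\overline\Omega$, so it is bounded below by a positive constant on $\overline\Omega$ and hence dominates $u_0\in L^\infty$ at $t=0$ without any decay assumption. One then lets $n\to\infty$ at the end. No parabolic Hopf or $C^s$ smoothing is ever needed.

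There is a second gap: your monotonicity and comparison arguments invoke the Gronwall uniqueness from Theorem \ref{mainpara}, which requires $q\ge 1$. But here $q\le p-1$ may well be $<1$, so the parabolic flow need not be unique and the comparison $(\underline u(\cdot),\underline u(\cdot+h))$ is not justified. The paper instead proves both monotonicity in time and the ordering $\underline u\le u\le \overline u_n$ at the level of the implicit discretization scheme \eqref{approx}, using only the elliptic comparison of Remark \ref{pmbis} together with an induction on $n$ (see Remark \ref{compari}); this works for every $q>0$. The $L^\infty$ part of your argument (Dini plus $C^s$ compactness) is essentially what the paper does, though the paper obtains the needed regularity of the barriers by noting that the well-chosen initial data $\overline u_0,\underline u_0$ lie in $D(\mathcal A)$ and invoking Proposition \ref{propReg}.
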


For the case $q=p-1$, the following theorem completes the asymptotic behavior given by Remark \ref{tmax} when $T_{max}=+\infty$  and given by the previous theorem.

\begin{theorem}\label{stabex}
Let $q=p-1$ and $\tilde \Omega\subset \Omega_0$ be a regular open set. Let $u_0\in \Wspz\cap L^\infty(\Omega)$ such that $u_0\geq c d(\cdot, \tilde \Omega^c)^s$.
If $\lambda\geq \lambda_1(\tilde \Omega)$ then $$\lim_{t\to \infty}\|u(t)\|_{L^\infty(\Omega)}=+\infty$$
where $u$ is the solution obtained by Theorem \ref{mainpara}.
\end{theorem}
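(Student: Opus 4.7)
The plan is to construct a nontrivial stationary subsolution of \eqref{PbLE} out of the first eigenfunction of $\pfrac$ on $\tilde\Omega$, use it as a monotone-in-time lower barrier for $u$, and then exclude the possibility that this lower barrier stays bounded by invoking the nonexistence of weak solutions to \eqref{PbLE} for $\lambda\notin\Lambda_q$.

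First I would take $\psi_1:=\psi_{1,\tilde\Omega}$, the positive first eigenfunction of $\pfrac$ on $\tilde\Omega$ extended by zero to $\Rn$, and check that $c\psi_1$ is a weak subsolution of \eqref{PbLE} for every $c>0$. On $\tilde\Omega$ this reduces to $\lambda_1(\tilde\Omega)\le\lambda$ together with $b\psi_1^r\equiv 0$, which holds because $\operatorname{supp}\psi_1\subset\overline{\tilde\Omega}\subset\overline{\Omega_0}$ and $b\equiv 0$ on $\Omega_0$. On $\Omega\setminus\tilde\Omega$, a direct computation of the singular integral gives $\pfrac\psi_1(x)=-2\int_{\tilde\Omega}\psi_1(y)^{p-1}|x-y|^{-d-sp}\,dy\le 0$, which yields the inequality where $\psi_1$ vanishes. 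The boundary estimate $\psi_1\le C\,d(\cdot,\tilde\Omega^c)^s$ (from Theorem \ref{thexi1} applied to the eigenvalue problem) combined with the hypothesis $u_0\ge c'd(\cdot,\tilde\Omega^c)^s$ then lets me choose $c$ small enough that $c\psi_1\le u_0$.

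Next, let $v$ denote the solution of \eqref{pbp} with initial datum $c\psi_1$ provided by Theorem \ref{mainpara}. Since $c\psi_1$ is a time-independent subsolution, the parabolic comparison principle (available as in Remark \ref{pmbis} and the proof of Theorem \ref{mainpara}) gives $v(t)\ge c\psi_1$ for every $t\ge 0$; comparing $v(\cdot+h)$ with $v(\cdot)$ then shows that $v$ is nondecreasing in time, and a further comparison yields $u(t)\ge v(t)\ge c\psi_1$ in $\Omega$ for all $t<T_{\max}$. In particular $t\mapsto\|v(t)\|_{L^\infty(\Omega)}$ is nondecreasing and admits a limit $M\in(0,+\infty]$.

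To conclude, if $T_{\max}<\infty$ the claim follows from Remark \ref{tmax}. Otherwise assume $T_{\max}=+\infty$ and, towards a contradiction, $M<\infty$. Then $v(t)\nearrow v_\infty$ pointwise with $v$ uniformly bounded in $L^\infty(\Omega)$, and the energy identity $\tfrac{d}{dt}J(v)=-\|\partial_t v\|_{L^2}^2$ together with this $L^\infty$ bound (which makes $J$ bounded below along the orbit) forces $\int_0^\infty\|\partial_t v\|_{L^2}^2\,dt<\infty$. Extracting $t_k\to\infty$ with $\partial_t v(t_k)\to 0$ in $L^2$ and $v(t_k)$ bounded in $\Wspz$, a Minty-type monotonicity argument upgrades the weak convergence to a strong convergence in $\Wspz$ and allows one to pass to the limit inside $\pfrac v(t_k)$, so that $v_\infty$ is a weak solution of \eqref{PbLE} with $v_\infty\ge c\psi_1>0$ on $\tilde\Omega$, hence $v_\infty>0$ in $\Omega$ by the strong maximum principle. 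This contradicts Theorem \ref{thexi1}, since $\lambda\ge\lambda_1(\tilde\Omega)\ge\lambda_1(\Omega_0)=\sup\Lambda_q$. Thus $M=+\infty$, and $\|u(t)\|_{L^\infty(\Omega)}\ge\|v(t)\|_{L^\infty(\Omega)}\to+\infty$. The main obstacle is the limit passage in this last step: identifying $v_\infty$ as a genuine weak solution requires Minty's monotonicity machinery applied to $\pfrac$ together with a careful lower bound on $J$ along the orbit, and is delicate precisely because $q=p-1$ is the homogeneity threshold at which the Dirichlet form and the reaction term have the same scaling.
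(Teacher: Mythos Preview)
Your proof is correct and follows the same architecture as the paper: take a small multiple of the first eigenfunction on $\tilde\Omega$ as an initial subsolution, run the parabolic flow from it to get a time-nondecreasing lower barrier $\underline u\le u$, and derive a contradiction from Theorem \ref{thexi1} if this barrier stays bounded. The paper's version is terser because it delegates both the monotonicity of $\underline u$ and the identification of $\underline u_\infty$ as a stationary solution to the machinery already developed in the proof of Theorem \ref{stab1}, where the limit passage is carried out via a semigroup argument (continuity of $S(t,\cdot)$ together with $S(t)\underline u_\infty=\lim_k S(t+k\tilde T)\underline u_0=\underline u_\infty$). Your route to the same conclusion is different: you use the energy identity \eqref{energyineq} and the assumed $L^\infty$ bound to force $\int_0^\infty\|\partial_t v\|_{L^2}^2<\infty$, extract $t_k$ with $\partial_t v(t_k)\to 0$, and then close by a Minty/monotonicity argument on $\pfrac$. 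This is more self-contained and does not rely on continuity of the solution map in the initial data, at the cost of being longer. Two minor remarks: the Minty step is not strictly needed here, since the monotone pointwise convergence $v(t_k)\nearrow v_\infty$ already gives a.e.\ convergence of the difference quotient $\frac{|v(t_k)(x)-v(t_k)(y)|^{p-2}(v(t_k)(x)-v(t_k)(y))}{|x-y|^{(d+sp)/p'}}$ and hence weak $L^{p'}$ convergence, which suffices to pass to the limit in $\langle\pfrac v(t_k),\phi\rangle$ (this is exactly the device used in the proof of Theorem \ref{thvp}); and the globality of your barrier $v$ when $q>1$ should be justified (it follows from $v\le u$ and Remark \ref{tmax} once $T_{\max}(u)=+\infty$ is assumed).
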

In the same way, we establish 
\begin{proposition}\label{lemma}
Let $q=p-1$, let $\lambda> \lambda_1(\Omega_0)$ and $u_0\in \Wspz\cap L^\infty(\Omega)$ such that $u_0\geq cd(\cdot,\Omega_0^c)^s$. Let $u$ be the solution of \eqref{pbp} obtained by Theorem \ref{mainpara}. Then, $\|u(t)\|_{L^2(\Omega_0)}$ blows up as $t\to \infty$ for $q\leq1$ and  $T_{max}<+\infty$ for $q>1$.
\end{proposition}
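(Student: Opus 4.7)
The plan is to compare $u$ from below with a multiple of the first eigenfunction $\psi:=\psi_{1,\Omega_0}$ of $\pfrac$ on $\Omega_0$ (extended by zero outside $\Omega_0$), and then to test \eqref{pbp} with $\psi^p/u^{p-1}$, exploiting a fractional Picone inequality together with the strict inequality $\lambda>\lambda_1(\Omega_0)$.

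First I would establish the pointwise lower bound $u(t,\cdot)\geq \varepsilon\psi$ on the existence interval for some $\varepsilon>0$. By a Hopf/regularity estimate one has $\psi\leq Cd(\cdot,\Omega_0^c)^s$ on $\Omega_0$, so the hypothesis $u_0\geq cd(\cdot,\Omega_0^c)^s$ gives $u_0\geq \varepsilon\psi$ for $\varepsilon$ small. The function $\varepsilon\psi$ is a subsolution of \eqref{pbp}: on $\Omega_0$ (where $b\equiv 0$) one has $\pfrac(\varepsilon\psi)=\lambda_1(\Omega_0)(\varepsilon\psi)^{p-1}<\lambda(\varepsilon\psi)^{p-1}$ since $\lambda>\lambda_1(\Omega_0)$; on $\Omega\setminus\Omega_0$, $\varepsilon\psi\equiv 0$ while $\pfrac(\varepsilon\psi)\leq 0$ straight from the nonlocal definition (as $\psi\geq 0$), so the subsolution inequality trivially holds. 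A parabolic analogue of the comparison principle in Remark \ref{pmbis} then yields $u\geq \varepsilon\psi>0$ on $\Omega_0$ throughout the existence interval.

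Next I would test \eqref{pbp} with $\varphi=\psi^p/u^{p-1}$, which is admissible thanks to the lower bound of Step 1. Since $\psi$ is supported in $\overline{\Omega_0}$ and $b\equiv 0$ there, the reaction term $b(x)u^r\varphi$ disappears and $\lambda u^{p-1}\varphi=\lambda\psi^p$. The fractional Picone inequality (Brasco--Franzina type) gives
\[
\langle \pfrac u,\,\psi^p/u^{p-1}\rangle \leq \|\psi\|_{\Wspz}^p = \lambda_1(\Omega_0)\int_{\Omega_0}\psi^p\,dx.
\]
Using $\partial_\tau u\cdot u^{1-p}=\frac{1}{2-p}\partial_\tau(u^{2-p})$ for $p\neq 2$ (or $\partial_\tau\log u$ for $p=2$) and integrating in $(0,t)$ leads to
\[
\frac{1}{2-p}\int_{\Omega_0}\bigl(u^{2-p}(t)-u_0^{2-p}\bigr)\psi^p\,dx \;\geq\; (\lambda-\lambda_1(\Omega_0))\,t\int_{\Omega_0}\psi^p\,dx,
\]
with the obvious $\log$-modification when $p=2$.

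The two regimes are then separated by the sign of $2-p$. When $q\leq 1$ (i.e.\ $p\leq 2$) the factor $1/(2-p)>0$ forces $\int_{\Omega_0}u^{2-p}(t)\psi^p\,dx\to+\infty$ (resp.\ $\int_{\Omega_0}\log u(t)\,\psi^2\,dx\to+\infty$ for $p=2$); a Jensen/Hölder argument using $\psi\in L^\infty(\Omega_0)$ then upgrades this to $\|u(t)\|_{L^2(\Omega_0)}\to+\infty$. When $q>1$ (i.e.\ $p>2$) the sign is reversed and the inequality becomes
\[
\int_{\Omega_0}u^{-(p-2)}(t)\psi^p\,dx \;\leq\; \int_{\Omega_0}u_0^{-(p-2)}\psi^p\,dx - (p-2)(\lambda-\lambda_1(\Omega_0))\,t\int_{\Omega_0}\psi^p\,dx;
\]
the integral $\int u_0^{-(p-2)}\psi^p\,dx$ is finite (it behaves like $\int d(\cdot,\Omega_0^c)^{2s}\,dx$ after the pointwise bounds on $u_0$ and $\psi$), so the right-hand side eventually becomes negative while the left-hand side stays non-negative, a contradiction. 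This forces $T_{\max}<+\infty$ through Remark \ref{tmax}. The main technical hurdle I anticipate is the rigorous justification of $\psi^p/u^{p-1}$ as a test function and of the time integration of $\int \partial_\tau u\cdot u^{1-p}\psi^p\,dx$: a truncation/approximation procedure (replacing $u$ by $u+\delta$ and $\psi$ by $\min(\psi,n)$) will be needed, and the Hopf-type lower bound from the first step is what allows one to pass to the limit.
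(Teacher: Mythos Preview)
Your argument is correct and the three regimes are handled properly, but the route is genuinely different from the paper's. The paper does not work directly on $u$: it picks $v_0\leq u_0$ given by (a small multiple of) the first eigenfunction $\psi_{1,\Omega_0}$, notes that $E_{\Omega_0}(v_0)<0$ since $\lambda>\lambda_1(\Omega_0)$, and then applies a separate energy lemma to the auxiliary problem $\partial_t v+\pfrac v=\lambda v^{p-1}$ on $\Omega_0$ (for $p\leq 2$ the energy inequality gives $d_t\|v(t)\|_{L^2}^2\geq -pE_{\Omega_0}(v_0)>0$; for $p>2$ it invokes the concavity/blow-up methods of Li--Xie and Fujii). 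The conclusion for $u$ then follows from the parabolic comparison $u\geq v$ built into the discretization scheme. So the paper's proof is modular and delegates the hard work to results already in the literature, at the cost of the $p>2$ case being essentially a black-box citation.

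Your Picone approach, by contrast, is self-contained and yields an explicit differential inequality in all three cases, with the sign of $2-p$ doing all the case-splitting. The price is the technical justification you flag: one must check that $\psi^p/u^{p-1}$ (or its $(u+\delta)$-regularization) is an admissible test function in $L^1(0,T;\Wspz)$ and that the pointwise Picone inequality of Brasco--Franzina survives on the set where $u$ or $\psi$ vanishes. The lower bound $u\geq\varepsilon\psi$ from your first step makes this go through, since $\psi>0$ forces $u>0$ and the remaining boundary cases reduce to trivial inequalities; still, this has to be written out, whereas the paper sidesteps the issue entirely by comparing with $v$ rather than testing against $u$.
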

Now introduce the following set
$$\mathcal H:=\{w\in \Wspz \ |\ \exists v_0\in \Wspzz \mbox{ {\it s.t.} } 0\leq v_0\leq w \mbox{ and } E_{\Omega_0}(v_0)<0\} $$
where for any $v\in \Wspzz$, we define the energy functional 
\begin{equation}\label{Eomega0}
    E_{\Omega_0}(v)=E(v,\Omega_0):=\frac{1}{p}\|v\|^p_\Wspzz-\frac{\lambda}{q+1}\|v\|^{q+1}_{L^{q+1}(\Omega_0)}.
\end{equation}

\begin{theorem}\label{q>p*-1}
Let $\lambda>0$, $q>p-1$ and $u_0\in \mathcal H \cap L^\infty(\Omega)$ be a nonnegative function. Then, the solution $u$  of \eqref{pbp} obtained by Theorem \ref{mainpara} satisfies
\begin{enumerate}
    \item[(i)] if $q>1$, then $T_{max}<+\infty,$
    \item[(ii)] if $q=1$, then $\|u(t)\|_{L^2(\Omega_0)}$ blows up as $t\to +\infty$,
    \item[(iii)] if $q<1$, then $\sup_{(0,+\infty)}\|u\|_{L^\infty(\Omega_0)}=+\infty$.
\end{enumerate}
\end{theorem}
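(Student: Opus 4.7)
The plan is to reduce the problem on $\Omega$ to a pure-power parabolic problem on $\Omega_0$ via comparison, and then run the standard Sattinger/Levine negative-energy argument on the reduced problem. First I would construct a local weak solution $w\in X_{T_w}$ of
$$\partial_t w+\pfrac w=\lambda w^q\text{ in }(0,T_w)\times\Omega_0,\quad w=0\text{ on }\Rn\setminus\Omega_0,\quad w(0)=v_0,$$
using the time-discretization of Theorem \ref{mainpara} with base domain $\Omega_0$ and vanishing absorption; the resulting $w$ is nonnegative by Remark \ref{pmbis}. Since $b\equiv 0$ on $\Omega_0$, $u$ itself is a (super)solution of the same equation on $\Omega_0$, with $u\ge 0=w$ on $\Rn\setminus\Omega_0$ and $u_0\ge v_0$. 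Using $(w-u)_+\in\Wspzz$ as test function in the difference of the two weak formulations, the monotonicity of $\pfrac$ together with the local Lipschitz character of $s\mapsto s^q$ on $[0,\|u\|_\infty]$ (or the order-preserving property of the semigroup when $q<1$) forces $w\le u$ on the joint existence interval via Gronwall.

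Next I would run the energy method on $w$. Testing the equation with $\partial_t w$, after a suitable regularisation, yields the standard identity
$$E_{\Omega_0}(w(t))+\int_0^t\|\partial_\tau w\|_{L^2(\Omega_0)}^2\,d\tau=E_{\Omega_0}(v_0)<0,$$
so the energy stays strictly negative. Testing with $w$ and setting $F(t):=\|w(t)\|^2_{L^2(\Omega_0)}$ gives $\tfrac12 F'(t)=\lambda\|w\|^{q+1}_{L^{q+1}(\Omega_0)}-\|w\|^p_{\Wspzz}$; inserting the negative-energy bound $\|w\|^p_{\Wspzz}<\tfrac{p\lambda}{q+1}\|w\|^{q+1}_{L^{q+1}(\Omega_0)}$ produces the key differential inequality
$$F'(t)\ge C_0\,\|w(t)\|^{q+1}_{L^{q+1}(\Omega_0)},\qquad C_0:=2\lambda\Big(1-\tfrac{p}{q+1}\Big)>0,$$
thanks to $q>p-1$. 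Note also $F(0)=\|v_0\|^2_{L^2(\Omega_0)}>0$, since $E_{\Omega_0}(v_0)<0$ excludes $v_0\equiv 0$.

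The three cases are then distinguished by how one lower-bounds $\|w\|^{q+1}_{L^{q+1}(\Omega_0)}$ in terms of $F$. For (i) $q>1$, Hölder gives $\|w\|^{q+1}_{L^{q+1}(\Omega_0)}\ge c\,F(t)^{(q+1)/2}$ with $(q+1)/2>1$, so $F$ satisfies a super-quadratic ODE and blows up in finite time; since $\|u(t)\|_{L^\infty(\Omega_0)}^2\ge F(t)/|\Omega_0|$, Remark \ref{tmax} forces $T_{max}<+\infty$. For (ii) $q=1$, the same step yields $F'\ge C_0 F$, hence $F(t)\ge F(0)e^{C_0 t}\to+\infty$, and $\|u(t)\|_{L^2(\Omega_0)}^2\ge F(t)\to\infty$ (the solution being global by Theorem \ref{mainpara}(i)). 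For (iii) $p-1<q<1$, Hölder runs the wrong way, so I argue by contradiction: if $M:=\sup_{t>0}\|u(\cdot,t)\|_{L^\infty(\Omega_0)}<\infty$ then $0\le w\le M$ and the monotonicity $s^{q-1}\ge M^{q-1}$ on $(0,M]$ (recall $q-1<0$) gives $\|w\|^{q+1}_{L^{q+1}(\Omega_0)}\ge M^{q-1}F(t)$; the inequality $F'\ge C_0 M^{q-1}F$ then yields exponential growth of $F$, contradicting the a priori bound $F(t)\le|\Omega_0|M^2$.

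The two main technical hurdles I anticipate are (a) the parabolic comparison of Step 1, which requires adapting the elliptic Proposition 2.1 of \cite{MR4913772} to the time-dependent setting and to the mismatched exterior conditions $\Rn\setminus\Omega$ and $\Rn\setminus\Omega_0$ (with extra care when $q<1$, where $s\mapsto s^q$ is only Hölder), and (b) the chain rule behind the energy identity: only the $X_{T_w}$-regularity is available for $w$, which is not quite enough to test the $p$-fractional term with $\partial_t w$ directly, so a Steklov-average or discretisation argument in the spirit of \cite{MR4913772} will be needed to make the computation rigorous.
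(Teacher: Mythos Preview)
Your strategy is exactly the paper's: compare $u$ with the solution $v$ of the pure-power problem \eqref{8} on $\Omega_0$ with initial datum $v_0$, then invoke the negative-energy blow-up trichotomy for $v$ (which the paper cites as Theorem~1.8 of \cite{MR4913772} rather than reproving, but your energy computation is the same argument). Your two anticipated hurdles are precisely what the paper has already packaged: the comparison $u\ge v$ is obtained directly from the discretisation scheme via Remark~\ref{compari} (so the $q<1$ non-Lipschitz issue never arises), and the energy identity is exactly \eqref{NRJ}, established by the Proposition~\ref{propReg} argument applied to \eqref{8}.
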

In order to  extend Theorem \ref{q>p*-1}, 
we introduce the following sets. We define, for any open set $\mathcal O\subseteq \Omega$, the unstable set 
$$\mathcal U_{\mathcal O}:=\{v\in W^{s,p}_0(\mathcal O) \;|\;E_{\mathcal O}(v)<m_{\mathcal O} \mbox{ and } I_{\mathcal O}(v)<0\},$$
and the stable set
$$\mathcal S_{\mathcal O}:=\{v\in W^{s,p}_0(\mathcal O) \;|\;E_{\mathcal O}(v)<m_{\mathcal O} \mbox{ and } I_{\mathcal O}(v)>0\}\cup\{0\}$$
where 
$$m_{\mathcal O}=m(\mathcal O):=\inf_{v\in W^{s,p}_0(\mathcal O)\backslash\{0\}} \left(\sup_{\theta>0}E_{\mathcal O}(\theta v)\right)$$ and $$I_{\mathcal O}(v)=I(v,\mathcal O):=\|v\|^p_{W^{s,p}_0(\mathcal O)}-\lambda\|v\|^{q+1}_{L^{q+1}(\mathcal O)}.$$ 
And finally, we define the sets
$$ \mathcal H_u:=\{w\in \Wspzz \ | \ \exists v_0\in \mathcal U_{\Omega_0} \text{ s.t. } w\geq v_0\geq 0\}$$
and
$$\mathcal H_s:=\{w\in \Wspz \ | \ \exists v_0\in \mathcal S_{\Omega}\cap L^\infty(\Omega) \text{ s.t. } v_0\geq w\geq 0\}.$$
\begin{theorem}\label{exploextin}
Let $\lambda>0$, $q>p-1$ and $u_0\in \Wspz \cap L^\infty(\Omega)$. Let $u$ be the solution obtained by Theorem \ref{mainpara}. Then,
\begin{itemize}
\item if $u_0\in \mathcal H_u$, then {\it (i)}-{\it (iii)} of Theorem \ref{q>p*-1} hold.
\item if $u_0\in \mathcal H_s$ and $sp>d$ then the solution $u$ is global and $u(t)$ converges to $0$ in $L^m(\Omega)$ for any $m\in [1,+\infty)$ as $t\to +\infty$.
\end{itemize}
\end{theorem}

\begin{remark}
Using Theorem 1.7 of \cite{MR4913772} and a comparison principle, we get finite time extinction for $q\in (p-1,1]$ and $\|u_0\|_{L^\infty(\Omega)}$ small enough.
\end{remark}
Let us summarize the results and main contributions to the literature. 
\begin{itemize}
\item Theorem \ref{thexi2} generalizes the existence theorems of the elliptic logistic equation in the superlinear case.

\item Theorem \ref{unifbu} proves the qualitative behavior of the solution of \eqref{PbLE} depending on $\lambda$ highlighting the differences between the local and nonlocal cases. Indeed, for $q=p-1$, in the nonlocal case we obtain the uniform blow up on the entire space $\Omega$. 

\item About the parabolic problem, Theorems \ref{mainpara} to \ref{exploextin} generalize the parabolic $p$-fractional equation that has been studied for different source types (see {\it e.g.}\cite{abdellaoui,MR4913772,mazon,vazquez}). They also generalize the parabolic logistic equation that has been mainly in the local linear case in \cite{pardo2,pardo}.
\end{itemize}

The rest of the paper is organized as follows : in Section \ref{sec2}, we study \eqref{PbLE}, we prove  Theorem \ref{thexi2} in Section \ref{superl} and  the convergence properties of solutions with respect to $\lambda$ and to the power $q$ in Section \ref{sec21}.
Section \ref{sec3} is dedicated to the parabolic problem. More precisely we prove Theorem \ref{mainpara} in Section \ref{sec31} and Theorems \ref{stab1} to \ref{exploextin} in Section \ref{sec32}.

\section{Study of the elliptic problem \texorpdfstring{\eqref{PbLE}}{E} }\label{sec2}
We start with a comparison principle for problem \eqref{PbLE}. Let begin by the notion of sub- and supersolution.
\begin{definition}[Sub- and supersolution]
We say that $u$ is a subsolution (resp. supersolution) of \eqref{PbLE} if $u$ is a nonnegative function in $\Omega$ belonging to $W^{s,p}(\Rn)$ such that $u\leq 0$ (resp. $u\geq 0$) in $\Rn\backslash \Omega$ and satisfies  for any $\phi\in \Wspz$, $\phi \geq 0$:
$$ \langle\pfrac u, \phi \rangle \leq \int_\Omega (\lambda u^q -bu^r) \phi\,dx, \; \text{  (resp. }\geq).$$
\end{definition}

\begin{proposition}\label{mono}
Let $q\leq p-1$. Let $u,\,v\in \Wspz$ such that $u\geq c_1d(\cdot, \Omega^c)^s$, $v\leq c_2d(\cdot, \Omega^c)^s$
and are respectively super and subsolution of \eqref{PbLE}. Then $u \geq v$ {\it a.e.} in $\Omega$.
\end{proposition}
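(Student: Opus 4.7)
The plan is to run a Díaz--Saa type comparison argument adapted to the nonlocal setting, using the fractional Picone inequality of Brasco--Franzina as the main analytic tool. The structural ingredient is that, under the hypotheses $q \leq p-1$, $r > p-1$, and $b \geq 0$, the map $s \mapsto f(x,s)/s^{p-1} = \lambda s^{q-p+1} - b(x) s^{r-p+1}$ is nonincreasing on $(0,+\infty)$, with strict decrease where $b(x) > 0$ (and globally strict when $q < p-1$). This ``sublinear'' monotonicity is precisely what drives uniqueness/comparison in Díaz--Saa-type arguments.

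I would argue by contradiction: assume $A := \{v > u\} \cap \Omega$ has positive measure. The Hopf-type lower bound $u \geq c_1 d(\cdot,\Omega^c)^s$ combined with $v \leq c_2 d(\cdot,\Omega^c)^s$ gives $v/u \leq c_2/c_1$ on $\Omega$, so the functions
\begin{equation*}
\phi := \frac{(v^p - u^p)^+}{u^{p-1}}, \qquad \psi := \frac{(v^p - u^p)^+}{v^{p-1}}
\end{equation*}
are nonnegative, bounded, and vanish outside $\Omega$. A standard $\varepsilon$-regularization (working with $u+\varepsilon$, $v+\varepsilon$, passing to the limit) combined with a fractional chain rule/Hardy estimate shows $\phi,\psi \in \Wspz$, so they are admissible test functions.

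Testing the supersolution inequality for $u$ against $\phi$ and the subsolution inequality for $v$ against $\psi$ and subtracting yields
\begin{equation*}
\langle \pfrac u, \phi \rangle - \langle \pfrac v, \psi \rangle \geq \int_A (v^p - u^p)\Bigl[\tfrac{f(x,u)}{u^{p-1}} - \tfrac{f(x,v)}{v^{p-1}}\Bigr]\,dx \geq 0,
\end{equation*}
the last step by the monotonicity just noted. Conversely, applying the Brasco--Franzina discrete Picone inequality to $(u, \max(u,v))$ and to $(v, \min(u,v))$, and combining via the pointwise lattice bound $|\max(u,v)(x)-\max(u,v)(y)|^p + |\min(u,v)(x)-\min(u,v)(y)|^p \leq |u(x)-u(y)|^p + |v(x)-v(y)|^p$, gives the opposite bound $\langle \pfrac u, \phi \rangle - \langle \pfrac v, \psi \rangle \leq 0$. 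Chaining the two forces $(v^p - u^p)[f(x,u)/u^{p-1} - f(x,v)/v^{p-1}] = 0$ a.e.\ on $A$. When $q<p-1$ the strict decrease of $\lambda s^{q-p+1}$ rules out $|A|>0$ immediately; in the borderline case $q=p-1$ the same equality first forces $A \subset \overline{\Omega_0}$ (since $b>0$ on $\Omega\setminus\overline{\Omega_0}$), and the equality case of Picone forces $v=cu$ on $A$ with $c>1$, which is ruled out after transporting the sub/super relations across $\partial\Omega_0$ through the nonlocal tail of $\pfrac$.

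The main obstacle I anticipate is twofold: first, rigorously justifying the admissibility $\phi,\psi \in \Wspz$, which is where the hypotheses $u \geq c_1 d^s$ and $v \leq c_2 d^s$ enter critically (via a fractional Hardy inequality to control the Gagliardo seminorm up to $\partial\Omega$); second, closing the homogeneous case $q=p-1$ on $\Omega_0$, where strict monotonicity of $f(\cdot,s)/s^{p-1}$ degenerates and the nonlocal character of $\pfrac$ must be exploited to propagate the inequality from $\Omega\setminus\Omega_0$ into $\Omega_0$.
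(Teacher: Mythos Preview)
Your proposal is essentially the paper's own proof: both use D\'{\i}az--Saa test functions $\phi=(v^p-u^p)^+/u^{p-1}$, $\psi=(v^p-u^p)^+/v^{p-1}$ (with an $\varepsilon$-shift to justify admissibility) together with the Brasco--Franzina discrete Picone inequality, and then reach a contradiction from the monotonicity of $s\mapsto \lambda s^{q-p+1}-b(x)s^{r-p+1}$. The only cosmetic difference is that the paper obtains nonnegativity of the nonlocal term by splitting the double integral into $\{u<v\}^2$ (Picone) and $\{u<v\}\times\{u\geq v\}$ (direct monotonicity of $z\mapsto (1-z)^{p-1}$), whereas you package the same computation via $\max(u,v)$, $\min(u,v)$ and the lattice inequality; your explicit flagging of the degenerate case $q=p-1$ on $\Omega_0$ is more cautious than the paper, which absorbs it into the strict inequality $r>p-1$.
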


\begin{remark}\label{monob}
Proposition \ref{mono} still works for $b=0,$ thus we have a comparison principle for the subhomogeneous equation: $\pfrac u =\lambda u^q$ where $q\leq p-1.$
\end{remark}

\begin{proof}
We define $\Psi_\epsilon= (u_\epsilon-\frac{v_\epsilon^p}{u_\epsilon^{p-1 }})_-$, $\Phi_\epsilon=(v_\epsilon-\frac{u_\epsilon^p}{v_\epsilon^{p-1 }})_+$ in $\Omega$ and $\Psi_\epsilon=\Phi_\epsilon=0$ in $\Rn\backslash \Omega$ where $u_\epsilon=u+\epsilon$, $v_\epsilon=v+\epsilon$ and $f_+$, $f_-$ are respectively the positive and negative part of $f$ such that $f=f_++ f_-$.\\
The conditions on $u$ and $v$ imply  that  $\Psi_\epsilon,\, \Phi_\epsilon \in L^\infty (\Omega)\cap \Wspz$ hence we use $-\Psi_\epsilon\geq 0$ in the definition of the supersolution $u$ and in the same way $\Phi_\epsilon\geq0$ in the definition of the subsolution $v$ to obtain:
\begin{equation}\label{7}
\begin{split}
J_\epsilon:=\langle \pfrac u &,\Psi_\epsilon\rangle +\langle \pfrac v, \Phi_\epsilon\rangle\\
& \leq I_\epsilon:= \int_{\{u<v\}} (\lambda u^{q}-bu^r)\Psi_\epsilon+(\lambda v^{q}-bv^r)\Phi_\epsilon\,dx.
\end{split}
\end{equation}
We have, for any $\epsilon>0$
\begin{equation*}
J_\epsilon=  \int_{\{u<v\}^2}W(x,y)\,dxdy+ 2\int_{\{u<v\}\times\{u>v\}}\widetilde W(x,y)\,dxdy
\end{equation*}
where
\begin{equation*}
    \begin{split}
W(x,y):=&\pow{v_\epsilon(x)-v_\epsilon(y)}{p-1} \Big((v_\epsilon- \frac{u_\epsilon^p}{v_\epsilon^{p-1}})(x)-(v_\epsilon-\frac{u_\epsilon^p}{v_\epsilon^{p-1}})(y)\Big)\\
&+ \pow{u_\epsilon(x)-u_\epsilon(y)}{p-1} \Big((u_\epsilon-\frac{v_\epsilon^p}{u_\epsilon^{p-1}})(x)-(u_\epsilon-\frac{v_\epsilon^p}{u_\epsilon^{p-1}})(y)\Big)\geq 0
\end{split}
\end{equation*}
by Proposition 4.2 in \cite{picone} and 
\begin{equation*}
\begin{split}
\widetilde W(x,y):=&\pow{u_\epsilon(x)-u_\epsilon(y)}{p-1} (u_\epsilon-\frac{v_\epsilon^p}{u_\epsilon^{p-1}})(x)\\
&+\pow{v_\epsilon(x)-v_\epsilon(y)}{p-1} (v_\epsilon-\frac{u_\epsilon^p}{v_\epsilon^{p-1}})(x) \\
=&\left(\pow{1-\frac{u_\epsilon(y)}{u_\epsilon(x)}}{p-1} -\pow{1-\frac{v_\epsilon(y)}{v_\epsilon(x)}}{p-1}\right)(u_\epsilon^p(x)-v_\epsilon^p(x))\geq 0
\end{split}
\end{equation*}
since $0\leq \frac{u_\epsilon(y)}{u_\epsilon(x)}\leq \frac{u_\epsilon(y)}{u_\epsilon(x)}$ on $\{u<v\}\times\{u>v\}$ and the mapping $z\mapsto \pow{1-z}{p-1}$ is decreasing on $\R^+$.\\
Hence $J_\epsilon\geq 0$ and by \eqref{7}, we also have $I_\epsilon\geq 0$.\\
The conditions on $u$ and $v$ insure that the functions $\frac{u}{v}$ and $\frac{v}{u}$ are bounded on $\{u<v\}$ thus from the Dominated Convergence Theorem, we pass to the limit as $\epsilon\to 0$ and we get
$$I_\epsilon \to\int_{\{u<v\}} \lambda(u^p-v^p)(u^{q-p+1}-v^{q-p+1}) -b (u^p-v^p)(u^{r-p+1}-v^{r-p+1})\,dx.$$
Assume that the measure of $\{u<v\}$ is positive, then the right-hand side in the previous limit is negative  since $q\leq p-1<r$, which contradicts $I_\epsilon\geq 0$ for any $\epsilon>0$. Thus we deduce that $u\geq v$ {\it a.e.} in $\Omega$.
\end{proof}
\subsection{The superlinear case}\label{superl}
\begin{proof}[Proof of Theorem \ref{thexi2}]
We use the Mountain Pass Theorem see for instance \cite[Theorem 2.4]{mountainpass}. We define, for any $v\in \Wspz$:

$$J(v):= \frac{1}{p} \|v\|^p_\Wspz - \frac{\lambda}{q+1}\|v\|^{q+1}_{L^{q+1}(\Omega)}+\frac{1}{r+1}\int_\Omega b |v|^{r+1}\,dx.$$
Then,  $J$ belongs to $C^1(\Wspz, \mathbb{R})$ and
$$J'(u).v=\langle \pfrac u, v\rangle -\lambda \int_\Omega |u|^{q-1}uv\,dx + \int_\Omega b|u|^{r-1}uv\,dx.$$
Since $q+1<p^*$ and $b\geq 0$, there exists $C>0$ such that 
$$J(v)\geq  
\|v\|^p_\Wspz\left(\frac{1}{p} -C \|v\|^{q+1-p}_\Wspz\right). $$
Hence from $q+1>p$, we have that $J(v)> 0$  for any $\|v\|_\Wspz\neq 0$ small enough and we deduce that $0_\Wspz$ is a local minimizer of $J$.\\
Moreover, let $\phi \in \Wspzz$, then for $t$ large enough
$$J(t\phi) = t^p \frac{1}{p} \|\phi\|^p_\Wspz - t^{q+1} \frac{\lambda}{q+1}\|\phi\|^{q+1}_{L^{q+1}(\Omega)}<0. $$
By continuity, there exists $t_0>0$ such that $J( t_0 \phi)=0$.\\
Let now show that $J$ satisfies the Palais-Smale condition. Let $(u_n)_n\subset \Wspz$ such that $|J(u_n)|\leq C$ and $J'(u_n)$ converges to $0$ as $n \to \infty$. We have in particular, for $n$ large enough, $|J'(u_n).u_n|\leq ||u_n||_\Wspz$ hence
\begin{equation*}\label{5}
-\|u_n\|^p_\Wspz +\lambda \|u_n \|^{q+1}_{L^{q+1}(\Omega)}-\int_\Omega b |u_n|^{r+1} \leq \|u_n\|_\Wspz.
\end{equation*}
and from $|J(u_n)|\leq C$, we get 
\begin{equation*}\label{6}
\frac{1}{p} \|u_n\|^p_\Wspz +\frac{1}{r+1}\int_\Omega b |u_n|^{r+1} \leq C+ \frac{\lambda}{q+1}\|u_n\|^{q+1}_{L^{q+1}(\Omega)}.
\end{equation*}
Combining the both previous inequalities, we obtain
$$(\frac{q+1}{p}-1)\|u_n\|^p_\Wspz +(\frac{q+1}{r+1}-1)\int_\Omega b {|u_n|^{r+1}}\leq \|u_n\|_\Wspz+C'.$$
Since $q+1>p$ and $q>r$, we deduce than the sequence $(u_n)$ is bounded in $\Wspz$. Thus $u_n \rightharpoonup u$ in $\Wspz$ up to a subsequence and from the compact embeddings involving $\Wspz$, $q\,,r < p^*-1$, we have
$$-\lambda \int_\Omega |u_n|^{q-1}u_n(u_n-u) + \int_\Omega b|u_n|^{r-1}u_n(u_n-u)\to 0.$$
Since $J'(u_n).(u_n-u)\to 0$, we infer that 
$$\langle \pfrac u_n, u_n-u\rangle\to 0$$
combined with the weak convergence of  $(u_n)$ in $\Wspz$, we get 
$$\langle \pfrac u_n-\pfrac u, u_n-u\rangle\to 0$$
and it follows  $u_n\to u$ in $\Wspz$. Thus $J$ verifies the Palay-Smale condition and there exists $u\in \Wspz$ such that $J'(u)=0$. Theorem 3.1 of \cite{iannizzotto} and Theorem 2.7 of \cite{hold} insure that 
$u\in C^s(\Rn)$.\\ 
Its remains to prove that $u>0$ in $\Omega$. Since $J(u)>0$, we have that $u\not\equiv  0$ and by definition of $u$, we have
$$J(u)=\inf_{\gamma \in \Gamma} \left(\max_{s\in [0,1]} J(\gamma(s))\right),$$
where $\Gamma=\{\gamma\in C([0,1],\Wspz), \;\gamma(0)=0, \;J (\gamma(1))<0\}$.\\
Noting that, for any $v\in \Wspz$,  $\|v\|_\Wspz\geq \||v|\|_\Wspz$ 
, we deduce that on $\Gamma_+=\{\gamma\in \Gamma, \gamma\geq0 \mbox{ in } [0,1]\}$
$$J(u)= \inf_{\gamma \in \Gamma_+} \left(\max_{s\in [0,1]} J(\gamma(s))\right).$$
Thus $u\geq0$ and since $u$ satisfies  $\pfrac u \geq -||b||_\infty||u||_\infty^{r-p+1}u^{p-1}$ then Theorem 1.4 in \cite{hopf} implies that $u$ is positive in $\Omega$.
\end{proof}

\subsection{Convergence properties}\label{sec21}
In this section, we establish Theorem \ref{unifbu}. More precisely, subsection \ref{sup} gives the proof of {\it (i)}, the second point is obtained by Theorem \ref{conv0}. We also study the behavior of the sequence of solutions in the superlinear case obtained in subsection \ref{superl}. 

\subsubsection{As \texorpdfstring{$\lambda\to \sup \Lambda_q$}{lambda to sup Lambda q}}\label{sup}\ \\
We first study the uniform blow up on compacts subset of $\Omega_0$ as $\lambda \to \sup \Lambda_q$ using a method similar to that in \cite{lopezgomez}. For this, for the case $q=p-1$,  we introduce an auxiliary eigenvalue problem: let $\mu>0$,
\begin{equation*}\label{PbI}\tag{$\mathcal Q_\mu$}
\left\{\begin{array}{l l}
\pfrac \psi_\mu=\lambda_\mu \psi_\mu^{p-1}  - \mu b(x)\psi_\mu^{p-1} &\text{in} \;\Omega,\\
\psi_\mu=0 &\text{in} \; ( \Rn \backslash \Omega), \\
\psi_\mu>0 &\text{in} \; \Omega .
\end{array}\right.
\end{equation*}
We first have:
\begin{theorem}\label{thvp}
For any $\mu>0$, there exist $\lambda_\mu\in (0,\lambda_1(\Omega_0))$ and a positive function 
$\psi_\mu\in \Wspz \cap L^\infty(\Omega)$ satisfying $\|\psi_\mu\|_{L^p(\Omega)}=1$,  such that
\begin{equation*}
    \begin{split}
        \lambda_\mu&=\inf_{w\in \Wspz, \|w\|_{L^p(\Omega)}=1} \|w\|_{\Wspz}^p+\mu\int_\Omega b|w|^p\,dx\\
        &= \|\psi_\mu\|_{\Wspz}^p+\mu\int_\Omega b\psi_\mu ^p\,dx.   
    \end{split}
\end{equation*}
Moreover, the sequence $(\psi_\mu,\lambda_\mu)_\mu$ converges to $(\vpz,\lambda_1(\Omega_0))$ in $C(\overline \Omega)\times \R$ as $\mu\to +\infty$ where $\vpz$ is the normalized eigenfunction of $\pfrac$ associated to $\lambda_1(\Omega_0)$.
\end{theorem}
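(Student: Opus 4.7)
The plan is to build $\psi_\mu$ by the direct method of calculus of variations applied to the Rayleigh-type functional
$$F_\mu(w)=\|w\|_{\Wspz}^p+\mu\int_\Omega b|w|^p\,dx\quad\text{under the constraint}\quad \|w\|_{L^p(\Omega)}=1,$$
to derive \eqref{PbI} as its Euler--Lagrange equation, and then to pass to the limit $\mu\to+\infty$. The two non-routine steps are proving the strict inequality $\lambda_\mu<\lambda_1(\Omega_0)$, which really uses the nonlocal ``tail'' of $\vpz$ felt from $\Omega\setminus\overline{\Omega_0}$, and upgrading the limit to the $C(\overline\Omega)$ topology via uniform H\"older regularity.

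\textbf{Existence, positivity, $L^\infty$ bound.} For a minimizing sequence $(w_n)$ of $F_\mu$ on the $L^p$-sphere, the bound $F_\mu(w)\geq\lambda_1(\Omega)\|w\|_{L^p(\Omega)}^p>0$ controls $(w_n)$ in $\Wspz$. I extract $w_n\rightharpoonup\psi_\mu$ weakly in $\Wspz$ and strongly in $L^p(\Omega)$, so $\|\psi_\mu\|_{L^p(\Omega)}=1$. Since $||u(x)|-|u(y)||\leq|u(x)-u(y)|$, passing to $|\psi_\mu|$ I may assume $\psi_\mu\geq 0$. Weak lower semicontinuity of the Gagliardo seminorm together with Fatou's lemma on the nonnegative integrand $b|w_n|^p$ shows that $\psi_\mu$ achieves $\lambda_\mu$. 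The Lagrange multiplier rule gives the weak form of \eqref{PbI}. Because $\psi_\mu$ is a nonnegative subsolution of $\pfrac u=\lambda_\mu u^{p-1}$ (using $b\geq 0$), the $L^\infty$ estimate of \cite[Theorem 3.1]{iannizzotto} and the fractional Hopf lemma \cite[Theorem 1.5]{hopf} together give $\psi_\mu\in L^\infty(\Omega)$ and $\psi_\mu>0$ in $\Omega$.

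\textbf{Strict two-sided bound on $\lambda_\mu$.} The lower bound $\lambda_\mu\geq\lambda_1(\Omega)>0$ is immediate from Poincar\'e. For the upper bound, extend $\vpz$ (normalized by $\|\vpz\|_{L^p(\Omega_0)}=1$) by zero to $\Omega$; since $b\equiv 0$ on $\Omega_0$ one has $F_\mu(\vpz)=\|\vpz\|_\Wspzz^p=\lambda_1(\Omega_0)$, so $\lambda_\mu\leq\lambda_1(\Omega_0)$. To exclude equality, suppose it holds: then $\vpz$ is another minimizer, hence satisfies
$$\langle\pfrac\vpz,\phi\rangle=\lambda_1(\Omega_0)\int_\Omega\vpz^{p-1}\phi\,dx\qquad\forall\,\phi\in\Wspz.$$
Test against $\phi\in C_c^\infty(\Omega\setminus\overline{\Omega_0})$ with $\phi\geq 0$, $\phi\not\equiv 0$. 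Splitting the Gagliardo double integral along the partition $\Omega_0\sqcup(\Rn\setminus\Omega_0)$ and using $\vpz\equiv 0$ outside $\Omega_0$, the left-hand side reduces to the strictly negative quantity $-2\iint_{\Omega_0\times\mathrm{supp}\,\phi}\vpz(x)^{p-1}\phi(y)\,|x-y|^{-d-sp}\,dx\,dy$, while the right-hand side vanishes. Contradiction.

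\textbf{Passage to the limit and $C(\overline\Omega)$ upgrade.} The functional $F_\mu$ is pointwise nondecreasing in $\mu$, so $\mu\mapsto\lambda_\mu$ is nondecreasing and $\lambda_\mu\nearrow L\leq\lambda_1(\Omega_0)$. From $\|\psi_\mu\|_\Wspz^p\leq\lambda_\mu\leq\lambda_1(\Omega_0)$ I extract $\psi_\mu\rightharpoonup\psi^*$ in $\Wspz$ and $\psi_\mu\to\psi^*$ in $L^p(\Omega)$, whence $\|\psi^*\|_{L^p(\Omega)}=1$. Dividing $\mu\int_\Omega b|\psi_\mu|^p\leq\lambda_\mu\leq\lambda_1(\Omega_0)$ by $\mu$ and passing to the limit forces $\int_\Omega b|\psi^*|^p=0$, so $\psi^*$ is supported in $\overline{\Omega_0}$ and lies in $\Wspzz$ with $\|\psi^*\|_{L^p(\Omega_0)}=1$. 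Weak lower semicontinuity then sandwiches $\lambda_1(\Omega_0)\leq\|\psi^*\|_\Wspzz^p\leq L\leq\lambda_1(\Omega_0)$, so $L=\lambda_1(\Omega_0)$ and $\psi^*=\vpz$ by uniqueness of the normalized positive first eigenfunction; uniform convexity of $\Wspz$ upgrades weak-plus-norm convergence to strong convergence. Finally, the $L^\infty$ bound of the second step is uniform in $\mu$ because $\lambda_\mu\leq\lambda_1(\Omega_0)$, and the right-hand side $(\lambda_\mu-\mu b)\psi_\mu^{p-1}$ of the equation satisfies a uniform upper bound; the H\"older regularity result \cite[Theorem 2.7]{hold} then provides a uniform $C^\alpha(\overline\Omega)$ estimate on $(\psi_\mu)$. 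Arzel\`a--Ascoli together with uniqueness of the limit promote the convergence to $C(\overline\Omega)$.
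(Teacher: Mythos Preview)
Your proof is correct and follows the same overall plan as the paper: direct method for existence, test with $\vpz$ for the upper bound, extract a weak limit supported in $\overline{\Omega_0}$, and use uniform H\"older regularity plus Arzel\`a--Ascoli for the $C(\overline\Omega)$ upgrade. The two places where you diverge are both mild improvements. For the strict inequality $\lambda_\mu<\lambda_1(\Omega_0)$, the paper argues that equality would make $\vpz$ a minimizer and hence a solution of \eqref{PbI} on $\Omega$, and then cites a strong maximum principle to force $\vpz>0$ on all of $\Omega$, contradicting $\vpz\in\Wspzz$; your explicit nonlocal-tail computation against a nonnegative bump supported in $\Omega\setminus\overline{\Omega_0}$ is exactly the mechanism behind that principle, carried out by hand. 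For identifying the limit, the paper passes to the limit in the weak formulation against $\phi\in\Wspzz$ and then invokes simplicity of the first eigenvalue on $\Omega_0$, whereas your Rayleigh-quotient sandwich $\lambda_1(\Omega_0)\leq\|\psi^*\|_{\Wspzz}^p\leq L\leq\lambda_1(\Omega_0)$ reaches the same conclusion more directly and, via uniform convexity, additionally records strong $\Wspz$ convergence, which the paper does not.
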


\begin{proof} The existence of $(\psi_\mu,\lambda_\mu)_\mu$ is given by Theorem 3.4 in \cite{refeigen}. Moreover Remark 3.5 and  Lemma 3.2 of \cite{refeigen} imply that $\Psi_\mu >0$ on $\Omega$ and  Theorem 3.2 of \cite{franzina} yields that $\psi_\mu\in L^\infty(\Omega)$ for any $\mu>0$.\\
By definition of $\lambda_\mu$, the sequence $(\lambda_\mu)_\mu$ is nonincreasing and for any $w$ belonging to $\Wspzz\subset \Wspz$ such that $\|w\|_{L^p(\Omega_0)}=1$, we have 
$$\|\psi_\mu\|_{\Wspz}^p+\mu\int_\Omega b\psi_\mu^p\,dx=\lambda_\mu\leq ||w||^p_\Wspz= ||w||^p_\Wspzz.$$
In particular, taking the eigenfunction associated to $\lambda_1(\Omega_0)$, we obtain that $\lambda_\mu\leq\lambda_1(\Omega_0)$. We also deduce that the sequence $(\psi_\mu)_\mu $ is bounded in $\Wspz$ and
\begin{equation}\label{unp}
\int_{\Omega}b\psi_\mu^p\,dx\to 0 \text{ as } \mu \to \infty.
\end{equation}
Then, up to a subsequence, there exist $\psi\in \Wspz$ and $\lambda_\infty\in (0,\lambda_1(\Omega_0)]$ such that $\psi_\mu$ converges weakly to $\psi$ in $\Wspz$. Thus, by a Sobolev compact embedding, $\psi_\mu$ converges to $\psi$ in $L^p(\Omega)$, {\it a.e.} in $\Omega$ and $\lambda_\mu \to \lambda_\infty$ as $\mu \to +\infty$.\\
Furthermore \eqref{unp} implies that $\int_\Omega b\psi^p\,dx=0$ thus $\psi=0 $ on $\Omega_0^c$ and $\psi$ belongs to $\Wspzz$.\\ 
For any $\mu>0$, $\pfrac \psi_\mu \leq  \lambda_1(\Omega_0) \psi_\mu^{p-1}$ in $\Omega$ and noting that  Theorem 3.2 in \cite{franzina} is still valid for positive subsolutions as $\psi_\mu$, this yields that $(\psi_\mu)_\mu$ is uniformly bounded in $L^\infty(\Omega)$.\\
From \cite[Theorem 2.7]{hold}, we deduce that $(\psi_\mu)_\mu$ is uniformly bounded in $C^s(\Rn)$ and hence $(\psi_\mu)_\mu$ is equicontinuous in $C(K)$ for any $K$ compact set of $\Rn$ and thus  Ascoli-Arzela Theorem insures, up to a subsequence, that $\psi_\mu$ converges to $\psi$ in $C(\overline \Omega)$.\\
Since $(\psi_\mu)_\mu$ is bounded in $\Wspz$, the sequence $\big(\frac{\pow{\psi_\mu(x)-\psi_\mu(y)}{p-1}}{|x-y|^{(d+sp)/p'}}\big)_\mu$ is bounded in $L^{p'}(\Rn\times\Rn)$.
By compactness arguments, 
we identify
$$\frac{\pow{\psi_\mu(x)-\psi_\mu(y)}{p-1}}{|x-y|^{(d+sp)/p'}} \rightharpoonup \frac{\pow{\psi(x)-\psi(y)}{p-1}}{|x-y|^{(d+sp)/p'}} \mbox{  in } L^{p'}(\Rn\times\Rn).$$ 
We infer that, taking $\phi \in \Wspzz$, 
$$\langle \pfrac \psi, \phi\rangle=\lim_{\mu \to +\infty}\langle \pfrac \psi_\mu, \phi\rangle=\lim_{\mu \to +\infty}\lambda_\mu \int_{\Omega_0}\psi_\mu^{p-1}\phi=\lambda_\infty \int_{\Omega_0}\psi^{p-1}\phi.$$
Then, Theorem 3.7 in \cite{refeigen} and $||\psi_\mu||_{L^p(\Omega)}=1$ imply that $\lambda_\infty= \lambda_1(\Omega_0)$ and $\psi=\vpz$.\\ 
We now establish that $\lambda_\mu<\lambda_1(\Omega_0)$. We argue by contradiction assuming that $\lambda_\mu=\lambda_1(\Omega_0)$ for some $\mu$. Then, $\vpz$ satisfies:
\begin{equation*}
\|\vpz\|_{\Wspz}^p+\mu\int_\Omega b|\vpz|^p\,dx=\lambda_1(\Omega_0)=\lambda_\mu.
\end{equation*}
Hence $\vpz$ is a nonnegative solution of \eqref{PbI} and thus Theorem 2.9 of \cite{refeigen} implies $\vpz>0$ {\it a.e.} in $\Omega$ which contradicts $\vpz\in \Wspzz$.
\end{proof}
By a comparison principle, we prove the following result of uniform blow up on the compact subsets of $\Omega_0$.
\begin{theorem}\label{Thexplo0}
The sequence $(u_\lambda)_\lambda$ blows up uniformly on the compact subsets of $\Omega_0$ as $\lambda \to \sup \Lambda_q$.
\end{theorem}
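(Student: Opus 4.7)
The plan is, for each $\lambda$ approaching $\sup\Lambda_q$ (namely $+\infty$ if $q<p-1$ and $\lambda_1(\Omega_0)$ if $q=p-1$), to exhibit an explicit subsolution $\underline u_\lambda$ of \eqref{PbLE} satisfying the pointwise boundary control $\underline u_\lambda \le C\,d(\cdot,\Omega^c)^s$, and such that $\underline u_\lambda \to +\infty$ uniformly on every compact $K\subset\Omega_0$. Since $u_\lambda \ge c\,d(\cdot,\Omega^c)^s$ by Theorem \ref{thexi1}, the comparison principle of Proposition \ref{mono} then forces $u_\lambda \ge \underline u_\lambda$ in $\Omega$, yielding $\min_K u_\lambda\to+\infty$. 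The construction of $\underline u_\lambda$ is treated separately in the two cases allowed by the definition of $\Lambda_q$.

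For $q=p-1$ and $\lambda\to\lambda_1(\Omega_0)^-$, I take $\underline u_\lambda = \mathcal M(\lambda)\,\psi_{\mu(\lambda)}$ with $\psi_\mu$ the eigenfunction furnished by Theorem \ref{thvp}. Using the equation satisfied by $\psi_\mu$ in \eqref{PbI}, the subsolution inequality $\pfrac\underline u_\lambda\le\lambda\underline u_\lambda^{p-1}-b\,\underline u_\lambda^r$ reduces to
\[
(\lambda_\mu-\lambda)(\mathcal M\psi_\mu)^{p-1} \;\le\; b\,(\mathcal M\psi_\mu)^{p-1}\bigl[\mu-(\mathcal M\psi_\mu)^{r-p+1}\bigr]\qquad\text{in }\Omega.
\]
I first pick $\mu(\lambda)\to+\infty$ with $\lambda_{\mu(\lambda)}<\lambda$, which is available because $\lambda_\mu\nearrow\lambda_1(\Omega_0)$; this makes the left-hand side nonpositive. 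On $\Omega_0$, $b=0$, so the right-hand side vanishes and the inequality holds. On $\Omega\setminus\Omega_0$, the right-hand side is nonnegative as soon as $\mathcal M\sup_{\Omega\setminus\Omega_0}\psi_\mu\le\mu^{1/(r-p+1)}$; and since the $C(\overline\Omega)$-convergence $\psi_\mu\to\vpz$ from Theorem \ref{thvp} and $\vpz\equiv 0$ on $\Omega\setminus\Omega_0$ give $\sup_{\Omega\setminus\Omega_0}\psi_\mu\to 0$, this upper bound diverges, allowing a choice $\mathcal M(\lambda)\to+\infty$. The uniform convergence $\min_K\psi_\mu\to\min_K\vpz>0$ on any compact $K\subset\Omega_0$ then yields $\min_K\underline u_\lambda\to+\infty$.

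For $q<p-1$ and $\lambda\to+\infty$, a simpler candidate works: $\underline u_\lambda=t(\lambda)\,\vpz$, with $\vpz$ extended by $0$ outside $\Omega_0$. On $\Omega_0$ (where $b=0$), the subsolution inequality becomes $\lambda_1(\Omega_0)\,t^{p-1-q}\vpz^{p-1-q}\le\lambda$, which holds for $t(\lambda)=\bigl(\lambda/(\lambda_1(\Omega_0)\|\vpz\|_\infty^{p-1-q})\bigr)^{1/(p-1-q)}\to+\infty$ as $\lambda\to+\infty$. On $\Omega\setminus\Omega_0$, $\vpz$ and the right-hand side both vanish, and the pointwise representation
\[
\pfrac\vpz(x)=-2\int_{\Omega_0}\frac{\vpz(y)^{p-1}}{|x-y|^{d+sp}}\,dy\le 0
\]
yields the inequality. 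Blow-up on compact $K\subset\Omega_0$ then follows from $\min_K\vpz>0$ and $t(\lambda)\to+\infty$.

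The main delicate point is the $q=p-1$ case: the single function $\mathcal M\psi_\mu$ must diverge on $\Omega_0$ while remaining controlled by $\mu^{1/(r-p+1)}$ on $\Omega\setminus\Omega_0$. This double control is only possible thanks to the $C(\overline\Omega)$-convergence provided by Theorem \ref{thvp}, which forces $\psi_\mu\to 0$ uniformly on $\Omega\setminus\Omega_0$ and thereby lets the admissible range for $\mathcal M$ blow up as $\mu\to\infty$. A secondary technical item is verifying the Proposition \ref{mono} hypothesis on $\underline u_\lambda$: the bound $\underline u_\lambda\le C(\lambda)\,d(\cdot,\Omega^c)^s$ is inherited from the $C^s(\Rn)$-regularity of $\psi_\mu$ and $\vpz$ together with their vanishing on $\Omega^c$, and it is enough that $C(\lambda)$ is finite for each fixed $\lambda$.
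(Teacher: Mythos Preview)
Your approach for $q=p-1$ coincides with the paper's: both take the subsolution $\mathcal M\,\psi_\mu$ built from the auxiliary eigenfunction of Theorem \ref{thvp} and then apply Proposition \ref{mono}. The paper renormalizes to $\|\psi_\mu\|_\infty=1$ and sets $\mathcal M=\mu^{1/(r-p+1)}$, so that $(\mathcal M\psi_\mu)^{r-p+1}\le\mu$ holds \emph{everywhere} in $\Omega$ and no splitting into $\Omega_0$ and $\Omega\setminus\Omega_0$ is needed; your route via $\sup_{\Omega\setminus\Omega_0}\psi_\mu\to 0$ reaches the same conclusion with a slightly longer argument.

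For $q<p-1$ the two proofs diverge. The paper does \emph{not} build a subsolution of \eqref{PbLE} on all of $\Omega$; instead it observes that $u_\lambda$ is a supersolution of $\pfrac w=\lambda w^q$ in $\Omega_0$ (where $b\equiv 0$), introduces $v_\lambda=\lambda^{1/(p-1-q)}V$ with $V$ the unique solution of $\pfrac V=V^q$ in $\Omega_0$, $V=0$ in $\Rn\setminus\Omega_0$, and applies the comparison of Remark \ref{monob} \emph{only in} $\Omega_0$. Your candidate $t(\lambda)\vpz$ is equally valid for this purpose (it is a subsolution of $\pfrac w=\lambda w^q$ in $\Omega_0$ by the eigenvalue equation), and comparing in $\Omega_0$ alone already yields the blow-up on compact subsets of $\Omega_0$. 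Your stronger claim---that $t\vpz$ is a weak subsolution of \eqref{PbLE} in the whole of $\Omega$---requires the inequality $\langle\pfrac\vpz,\phi\rangle\le\lambda_1(\Omega_0)\int_{\Omega_0}\vpz^{p-1}\phi$ for every nonnegative $\phi\in\Wspz$, not merely the pointwise bound $\pfrac\vpz(x)\le 0$ on $\Omega\setminus\overline\Omega_0$. This does hold (the additional nonlocal term coming from $\phi|_{\Omega\setminus\Omega_0}$ is $-2\int_{\Omega\setminus\Omega_0}\phi(y)\int_{\Omega_0}\vpz(x)^{p-1}|x-y|^{-d-sp}\,dx\,dy\le 0$), but it needs to be said; the paper's restriction to $\Omega_0$ avoids the issue entirely.
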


\begin{proof}
Consider first the case $q=p-1$, then $\sup \Lambda_q=\lambda_1(\Omega_0)$.\\
Let $\mu>0$, taking $\psi_\mu$ and $\lambda_\mu$ defined by Theorem \ref{thvp} and normalizing $\psi_\mu$  such that $||\psi_\mu||_{L^\infty(\Omega)}=1$, we define $\underline{u}_\lambda=\mu ^\frac{1}{r-p+1}\psi_\mu$ which satisfies
$$\pfrac \underline{u}_\lambda = \lambda_\mu \underline{u}_\lambda^{p-1}-b\underline{u}_\lambda ^r\psi_\mu^{p-1-r}.$$
Since $r>p-1$ and $||\psi_\mu||_\infty=1$, for any $\lambda$ close enough to $\lambda_1(\Omega_0)$ ({\it i.e.} $\lambda_\mu\leq \lambda<\lambda_1(\Omega_0)$), we get:
\begin{equation}\label{2}
\pfrac \underline{u}_\lambda\leq \lambda \underline{u}_\lambda^{p-1}-b\underline{u}_\lambda^r
\end{equation}
hence $\underline{u}_\lambda$ is a subsolution of \eqref{PbLE}. Thus,  $u_\lambda \geq \underline{u}_\lambda=\mu^\frac{1}{r-p+1} \psi_\mu$  {\it a.e.} in  $\Omega$  for $\lambda\in [\lambda_\mu,\lambda_1(\Omega_0))$ by Proposition \ref{mono}.
Hence taking $\mu $ goes to $+\infty$, Theorem \ref{thvp} insures the conclusion of the proof in the case $q=p-1$.\\
Consider now the case $q<p-1$, then $\sup \Lambda_q=+\infty$.
Let $V$ be the unique solution of the following problem
\begin{equation*}
\left\{\begin{array}{l l}
\pfrac V=V^q &\text{in} \;\Omega_0,\\
V=0 &\text{in} \; ( \Rn \backslash \Omega_0), \\
V>0 &\text{in } \; \Omega_0.
\end{array}\right.
\end{equation*}
The solution $V$ is obtained by minimization and satisfies $V\geq  Cd(\cdot,\Omega_0^c)^s$ for some constant $C>0$  (see for instance Theorem 4.3 in \cite{MR4913772}).\\
Define, for any $\lambda \geq 1$, $v_\lambda=\lambda^{\frac{1}{p-1-q}}V$.

Since the solution $u_\lambda$ of \eqref{PbLE} is a supersolution of  $\pfrac w=\lambda w^q$ in $\Omega_0$, Remark \ref{monob} implies that  $u_\lambda\geq v_\lambda$ {\it a.e.} in $\Omega_0$.
Hence $u_\lambda\geq v_\lambda\geq C\lambda^\frac{1}{p-1-q}d(\cdot,\Omega_0^c)^s$ and we conclude the blow-up of $u_\lambda$ on the compact subsets of $\Omega_0$.
\end{proof}

\begin{theorem}\label{Thexplo}
The sequence $(u_\lambda)_\lambda$ blows up uniformly on the compact subsets of $\Omega$ as $\lambda \to \sup \Lambda_q$ under the additional condition $p\geq 2$ if $q<p-1$.
\end{theorem}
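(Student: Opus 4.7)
The proof extends Theorem \ref{Thexplo0} (blow-up on compact subsets of $\Omega_0$) to compact subsets of all of $\Omega$ by exploiting the nonlocal character of $\pfrac$: the singular kernel $|x-y|^{-(d+sp)}$ couples distant regions, so unbounded growth of $u_\lambda$ on $\Omega_0$ must propagate to the entire $\Omega$ through the equation.

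\textbf{Reduction and main argument.} Since each $u_\lambda$ is continuous on $\Rn$ and the family is monotone nondecreasing in $\lambda$ (Theorem \ref{thexi1}), a Dini-type argument (for any $M>0$ the open sets $\{u_\lambda > M\}$ form an increasing cover of any compact $K\subset \Omega$ on which $u_\lambda \to +\infty$ pointwise, so by compactness and monotonicity they coincide with $K$ for some $\lambda$ onwards) reduces the claim to pointwise blow-up $u_\lambda(x_0) \to +\infty$ at every $x_0 \in \Omega$. The case $x_0 \in \Omega_0$ is Theorem \ref{Thexplo0}; so fix $x_0 \in \Omega\setminus\overline{\Omega_0}$ and assume, for contradiction, that $u_{\lambda_n}(x_0) \leq M_0$ along some $\lambda_n \to \sup\Lambda_q$. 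Pick a compact $K_0 \subset \Omega_0$ with $|K_0|>0$ and $\delta:= d(x_0,K_0)>0$; Theorem \ref{Thexplo0} gives $M_n := \min_{K_0} u_{\lambda_n} \to +\infty$. Evaluating the equation pointwise at $x_0$ and isolating the contribution of $K_0$ to the nonlocal integral (where, for $n$ large, $u_{\lambda_n}(y) - u_{\lambda_n}(x_0) \geq M_n - M_0$ and $|x_0-y| \leq \operatorname{diam}(\Omega)$) yields
$$\pfrac u_{\lambda_n}(x_0) \leq -c(\delta,|K_0|)(M_n - M_0)^{p-1} + C(M_0) \longrightarrow -\infty,$$
where the constant $C(M_0)$ absorbs the principal value near $x_0$ (using the $C^s$ regularity of $u_{\lambda_n}$), the integral over $\Rn\setminus\Omega$ (where $u_{\lambda_n} \equiv 0$), and the contribution of $\Omega\setminus(K_0 \cup B_{\delta/2}(x_0))$ (whose integrand is either non-positive, which only helps, or bounded by $M_0^{p-1}$ times an integrable kernel). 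On the other hand the equation directly gives
$$\pfrac u_{\lambda_n}(x_0) = \lambda_n u_{\lambda_n}(x_0)^q - b(x_0) u_{\lambda_n}(x_0)^r \geq -b(x_0) M_0^r,$$
a fixed finite lower bound, contradicting the blow-up to $-\infty$. This forces $u_\lambda(x_0) \to +\infty$ at every $x_0$, and the reduction concludes.

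\textbf{Main obstacle.} The delicate point is to justify the pointwise evaluation of $\pfrac u_{\lambda_n}(x_0)$ with its sharp upper bound. The base regularity $u_{\lambda_n} \in C^s(\Rn)$ from Theorem \ref{thexi1} is not by itself sufficient for convergence of the principal value near $x_0$: one needs local Hölder regularity of order $sp/(p-1)+\varepsilon$. For $p \geq 2$ this is provided by interior regularity estimates for the fractional $p$-Laplacian with a bounded right-hand side, which is precisely why the additional condition $p \geq 2$ is imposed in the case $q < p-1$. When $q = p-1$, the exponent $\lambda$ stays bounded by $\lambda_1(\Omega_0)$ and one can bypass pointwise evaluation by testing the equation against a nonnegative bump centered at $x_0$ and supported away from $K_0$, bounding the bilinear form directly; this is why no restriction on $p$ is needed in that case.
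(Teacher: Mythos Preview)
Your intuition that the nonlocal kernel propagates the blow-up from $\Omega_0$ to all of $\Omega$ is correct, and the Dini reduction to pointwise blow-up is fine. But the pointwise evaluation at $x_0$ does not go through, and the obstacle is not the one you identify. Interior H\"older estimates for $\pfrac$ with bounded right-hand side control $[u]_{C^\alpha(B_{r/2}(x_0))}$ in terms of $\|u\|_{L^\infty(B_r)}$, $\|f\|_{L^\infty(B_r)}$, \emph{and} the nonlocal tail $\big(r^{sp}\int_{\Rn\setminus B_r(x_0)}|u|^{p-1}|y-x_0|^{-d-sp}\,dy\big)^{1/(p-1)}$. This tail contains the contribution of $K_0\subset\Omega_0$, which diverges by Theorem~\ref{Thexplo0}; hence no uniform-in-$n$ H\"older bound near $x_0$ is available, and your constant $C(M_0)$ for the principal value cannot be chosen independently of $n$. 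The fallback of testing against a bump function fails for the same reason: the hypothesis $u_{\lambda_n}(x_0)\le M_0$ gives no control of $u_{\lambda_n}$, hence of the right-hand side $\lambda_n u_{\lambda_n}^q-b\,u_{\lambda_n}^r$, on the support of the bump.

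The paper works entirely at the weak level and never evaluates the operator pointwise. Fixing a ball $\mathcal O\subset\Omega$ and a set $\omega$ with $\overline\omega\subset\Omega_0$ and $d(\omega,\mathcal O)>0$, one writes $v_\lambda=u_\lambda-u_\lambda\mathbb{1}_\omega$ and obtains on $W^{s,p}_0(\mathcal O)$ the equation $\pfrac v_\lambda+b\,v_\lambda^r=\lambda u_\lambda^q+h_\lambda$, where $h_\lambda(x)=2\int_\omega\big(u_\lambda(x)^{p-1}-(u_\lambda(x)-u_\lambda(y))^{p-1}\big)|x-y|^{-d-sp}\,dy$ is a genuine $L^\infty$ function because the kernel is bounded on $\mathcal O\times\omega$. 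The crucial lower bound $h_\lambda\ge C(\inf_\omega u_\lambda)^{p-1}$ follows from the elementary inequality $(|\xi|^{p-2}\xi-|\eta|^{p-2}\eta)(\xi-\eta)\ge c\,|\xi-\eta|^p$ for $p\ge2$, applied with $\xi=u_\lambda(x)$ and $\eta=u_\lambda(x)-u_\lambda(y)$; \emph{this algebraic fact, not regularity, is the reason for the hypothesis $p\ge2$ when $q<p-1$}. For $q=p-1$ one instead borrows a fraction of $\lambda u_\lambda^{p-1}$ and uses $(1+\alpha)t^{p-1}-(t-t_0)^{p-1}\ge c_p\,t_0^{p-1}$, which holds for every $p>1$. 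Once $\lambda u_\lambda^q+h_\lambda$ is arbitrarily large on $\mathcal O$, a weak comparison with the scaled torsion function $Rw$ (via the monotone operator $\mathcal A$) gives $u_\lambda\ge cR$ on the concentric half-ball; covering $K$ by finitely many such balls finishes the proof.
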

\begin{proof}
Let $K$ be a compact subset of $\Omega$.
Let $R<\frac12 d(K,\partial \Omega)$
and we consider the finite family $(x_k)\subset K$ such that $K\subset \bigcup B(x_k,R)$.
For any $k$, we set $\mathcal O= B(x_k,2R)\subset \Omega$ and we consider two cases $\overline{\mathcal O}\subset \Omega_0$ and  $\overline{\mathcal O}\not\subset \Omega_0$. In the first case, we apply Theorem \ref{Thexplo0} in $B(x_k,R)$.\\
For the second case, there exists $\omega \subset \Omega_0$ verifying $\overline \omega \subset \Omega_0$ and $d(\omega,\overline{\mathcal O})>0$.

We define $v_\lambda = u_\lambda- u_\lambda\mathbb{1}_\omega$ where $\mathbb{1}_\omega(x)=1$ if $x\in \omega$ and  $\mathbb{1}_\omega(x)=0$ otherwise, then by \cite[Lemma 2.8] {globalhold}, $v_\lambda$ satisfies in the weak sense on $W^{s,p}_0(\mathcal O)$
\begin{equation}\label{eq8}
\pfrac v_\lambda+bv_\lambda^r = \lambda u_\lambda^{q}+h_\lambda
\end{equation}
where, recalling the notation $\pow{t}{p-1}=|t|^{p-2}t$, for any $x\in \mathcal O$:
$$h_\lambda(x)=2\int_\omega \frac{\pow{u_\lambda(x)}{p-1}-\pow{u_\lambda(x)-u_\lambda(y)}{p-1}}{|x-y|^{d+sp}}dy.$$

First, for $q=p-1$, we choose $\alpha>0$ a suitable parameter independent of $\lambda$ small enough such that  for any $x\in \mathcal O$, we have 
\begin{equation}
 \lambda u_\lambda^{p-1 }(x)+h_\lambda(x)  \geq 2\int_\omega \frac{(1+\alpha)\pow{u_\lambda(x)}{p-1}-\pow{u_\lambda(x)-u_\lambda(y)}{p-1}}{|x-y|^{d+sp}}dy.
\end{equation}
Noting that, for any  $X\geq 0$, $Y>0$, $(1+\alpha)X^{p-1}-\pow{X-Y}{p-1}\geq  c_p Y^{p-1}$
where the constant $c_p$ depends on $\alpha$ and $p$. Hence,  we obtain, for any $x\in \mathcal O$ 
\begin{equation}\label{eq9}
h_\lambda(x) + \lambda u_\lambda^{p-1 }(x)\geq 2c_p\int_\omega \frac{u^{p-1}_\lambda(y)}{|x-y|^{d+sp}}dy\geq  C(\inf_\omega u_\lambda)^{p-1}
\end{equation}
where $C$ is independent of $\lambda$.\\
Now consider the case $q<p-1$ where $p\geq 2$.  The inequality \eqref{ineg2} implies directly that for any $x\in \mathcal O$, $h_\lambda(x) \geq C (\inf_{ \omega}u_\lambda)^{p-1}$ where $C>0$ is independent of $\lambda$.\\
Hence, the inequality \eqref{eq9} holds for $q<p-1$ and $p\geq 2$. Thus by Theorem \ref{Thexplo0}, we deduce in any case that the right-hand side in \eqref{eq8} can be chosen large enough in $\mathcal O$ as $\lambda\to \sup \Lambda_q$.\\
Consider now $w\in \Wspz\cap L^\infty(\Omega)$ the positive solution of 
\begin{equation*}
\left\{
\begin{array}{l l}
\pfrac w = 1 &\text{in} \;\mathcal O,\\
w= 0 &\text{in} \; ( \Rn \backslash \mathcal O),
\end{array}\right.
\end{equation*}
satisfying $w\geq cd(\cdot,\mathcal{O}^c)^s$ for some constant $c>0$ (see \cite[Theorem 1.5]{hopf}).\\
Then, for $\rho>0$ we set $w_\rho=\rho w$ which satisfies for $\lambda$ close enough to $\sup \Lambda_q$, for any $\varphi \in W^{s,p}_0(\mathcal O)$, $\varphi\geq 0$
\begin{equation*}
\begin{split}
\langle \pfrac w_\rho,\varphi\rangle + \int_{\mathcal O} bw_\rho^r\varphi\,dx 
&\leq \langle \pfrac v_\lambda,\varphi\rangle +\int_{\mathcal O} bv_\lambda^r \varphi\,dx.
\end{split}
\end{equation*} 
Taking $\varphi=(w_\rho-v_\lambda)_+$, we deduce that 
$u_\lambda =v_\lambda\geq w_\rho\geq c\rho d(\cdot,\mathcal{O}^c)^s \mbox{ in }\mathcal{O}$
where $c>0$ is independent of $\rho$ and we infer that for any $k$, for any $\rho>0$ and for $\lambda$ close enough to $\sup \Lambda_q$, 
$$\inf_{B(x_k,R)} u_\lambda\geq \tilde C_k \rho.$$   
Finally, we conclude that $u_\lambda$ blows up in any ball $B(x_k,R)$ and hence blows up uniformly on the compact set $K$.
\end{proof}
\subsubsection{As \texorpdfstring{$\lambda\to \inf \Lambda_q$}{lambda to inf Lambda q}}
\begin{theorem}\label{conv0}
The sequence $(u_\lambda)_\lambda$ converges uniformly to $0$ as $\lambda \to \inf \Lambda_q$.
\end{theorem}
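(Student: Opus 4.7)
The plan is to split according to $\Lambda_q$: when $q<p-1$ one has $\inf\Lambda_q=0$, while when $q=p-1$ one has $\inf\Lambda_q=\lambda_1(\Omega)$; in each case the goal is $\|u_\lambda\|_{L^\infty(\Omega)}\to 0$.

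For $q<p-1$ the result comes from a direct comparison. Since $b\ge 0$ and $u_\lambda>0$, $u_\lambda$ is a subsolution of the subhomogeneous equation $\pfrac w=\lambda w^q$. Let $w_1\in\Wspz$ be a positive solution of $\pfrac w_1=w_1^q$, obtained by minimization of $\frac1p\|v\|_\Wspz^p-\frac1{q+1}\int_\Omega |v|^{q+1}\,dx$ exactly as in the proof of Theorem \ref{Thexplo0}; it satisfies $c\,d(\cdot,\Omega^c)^s\le w_1\le C\,d(\cdot,\Omega^c)^s$ by $C^s$-regularity and the Hopf lemma. The scaling $w_\lambda:=\lambda^{1/(p-1-q)}w_1$ solves $\pfrac w_\lambda=\lambda w_\lambda^q$, so the comparison principle of Remark \ref{monob} gives $u_\lambda\le w_\lambda$ in $\Omega$, and hence $\|u_\lambda\|_\infty\le\lambda^{1/(p-1-q)}\|w_1\|_\infty\to 0$ as $\lambda\to 0^+$.

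For $q=p-1$, by the monotonicity of Theorem \ref{thexi1}, as $\lambda\searrow\lambda_1(\Omega)$ the family $u_\lambda$ decreases pointwise to some $u_*\ge 0$ and is uniformly $L^\infty$-bounded by any fixed $u_{\lambda_0}$. Testing the equation with $u_\lambda$ yields the energy identity $\|u_\lambda\|_\Wspz^p+\int_\Omega bu_\lambda^{r+1}\,dx=\lambda\|u_\lambda\|_{L^p(\Omega)}^p$, hence a uniform $\Wspz$ bound. Weak $\Wspz$-convergence combined with a.e.\ convergence allows passing to the limit in $\pfrac$ (by the $L^{p'}$-identification argument used in the proof of Theorem \ref{thvp}), while bounded convergence handles the right-hand side, so $u_*$ is a nonnegative weak solution of $\pfrac u_*=\lambda_1(\Omega)u_*^{p-1}-bu_*^r$. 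The key claim is $u_*\equiv 0$: otherwise $\|u_*\|_{L^p}>0$, and combining the energy identity for $u_*$ with the Rayleigh bound $\|u_*\|_\Wspz^p\ge\lambda_1(\Omega)\|u_*\|_{L^p}^p$ forces $\int_\Omega bu_*^{r+1}\,dx\le 0$, hence $u_*=0$ on $\{b>0\}$; but the same inequality, now an equality, makes $u_*$ a first Dirichlet eigenfunction of $\pfrac$ on $\Omega$, hence strictly positive in $\Omega$ by the simplicity/positivity of $\lambda_1(\Omega)$ (see \cite{refeigen}), contradicting $|\{b>0\}|>0$. Since $u_\lambda\searrow 0$ pointwise with each $u_\lambda\in C^s(\R^d)$ continuous on the compact $\overline\Omega$, Dini's theorem upgrades the convergence to uniform. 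The main obstacle is precisely this $q=p-1$ case, which requires both the limit-passage in the nonlinear nonlocal operator and the Rayleigh/simplicity argument to exclude a nontrivial pointwise limit.
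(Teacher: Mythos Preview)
Your proof is correct, but it diverges from the paper's in both cases.

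For $q<p-1$, the paper does not use scaling. It argues uniformly for both sub- and homogeneous cases: the monotonicity of Proposition~\ref{mono} gives a uniform $L^\infty$ bound near $\inf\Lambda_q$, hence a $\Wspz$ bound from the equation, then weak compactness plus the global $C^s$ estimate of \cite{globalhold} and Ascoli--Arzel\`a yield $u_\lambda\to u$ in $C(\overline\Omega)$; passing to the limit (as in the proof of Theorem~\ref{thvp}) shows that $u$ solves $\pfrac u+bu^r=(\inf\Lambda_q)\,u^q$, which forces $u=0$ by the comparison principle for $\mathcal A$ when $\inf\Lambda_q=0$, and by Theorem~\ref{thexi1} when $\inf\Lambda_q=\lambda_1(\Omega)$. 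Your scaling argument $u_\lambda\le \lambda^{1/(p-1-q)}w_1$ is more direct and avoids any compactness or regularity beyond Theorem~\ref{thexi1}; it even gives an explicit decay rate.

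For $q=p-1$, your limit passage is the same as the paper's, but you replace the citation of Theorem~\ref{thexi1} by a self-contained Rayleigh/simplicity argument to rule out $u_*\not\equiv 0$, and you replace the $C^s$--Ascoli step by Dini's theorem, exploiting the monotonicity in $\lambda$ that the paper does not use at this point. Both substitutions are legitimate; yours is more elementary in that it avoids the global H\"older result of \cite{globalhold}, at the price of requiring the monotone dependence on $\lambda$ (which is available from Theorem~\ref{thexi1}).
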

\begin{proof}
From Proposition \ref{mono}, for $\lambda$ close to $\inf \Lambda_q$, we have that the sequence $(u_\lambda)_\lambda$ is bounded in $L^\infty(\Omega)$.\\
We deduce taking the test function $u_\lambda$ in \eqref{ws}, that  the sequence $(u_\lambda)_\lambda$  is also bounded in $\Wspz$. Then, up to a subsequence and for some $u\in \Wspz$, $u_\lambda \rightharpoonup u$ in $\Wspz$.   By Sobolev embedding and interpolation, we get the convergence in $L^m(\Omega)$ for any $m\geq 1$.
Moreover Theorem 1.1 in \cite{globalhold} and Ascoli-Arzela Theorem give $u_\lambda\to u$ in $C(\overline\Omega)$.\\
Proceeding as in the proof of Theorem \ref{thvp}, we establish that $u$ is a weak solution of 
$$\left\{
\begin{array}{l l}
\pfrac u + bu^r = (\inf \Lambda_q )u^{q} & \mbox{ in } \Omega\\
u=0 &\mbox { in } \Rn\backslash \Omega.
\end{array}\right.
$$
In the case $q<p-1$, {\it i.e.} $\inf \Lambda_q=0$, $w=0$ is the unique solution by Proposition 2.1 in \cite{MR4913772}. For $q=p-1$, {\it i.e.} $\inf \Lambda_q=\lambda_1(\Omega)$,  Theorem \ref{thexi1} implies $u=0$.
\end{proof}
\subsubsection{For \texorpdfstring{$q>p-1$}{q>p-1}}
\ \\
In this subsection, we get the asymptotic behavior for the superlinear case.\\
We set $\Sigma_q\subseteq \R^+_*$ the set of $\lambda$ such that \eqref{PbLE} admits a weak solution. We denote that for $q\leq p-1$,  $\Sigma_q=\Lambda_q$ by Theorem \ref{thexi1} and for $r$ and $q$ as in Theorem \ref{thexi2} we have $\Sigma_q=(0,+\infty)$.
\begin{theorem}\label{q>p-1}
Let $q>p-1$. Assume that $\inf \Sigma_q=0$, then the sequence $(||u_\lambda||_\infty)_{\lambda\in \Sigma_q} $ goes to $\infty$ as $\lambda \to 0$.
\end{theorem}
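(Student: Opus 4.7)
The plan is to argue by contradiction using a simple energy estimate with $u_\lambda$ itself as a test function. Suppose that there exists a sequence $\lambda_n \in \Sigma_q$ with $\lambda_n \to 0$ and a constant $M>0$ such that $\|u_{\lambda_n}\|_{L^\infty(\Omega)}\leq M$ for every $n$. Since $u_{\lambda_n}\in \Wspz$ is a weak solution of \eqref{PbLE}, testing \eqref{ws} against $\phi = u_{\lambda_n}$ gives
\begin{equation*}
\|u_{\lambda_n}\|_{\Wspz}^p + \int_\Omega b(x)\, u_{\lambda_n}^{r+1}\, dx = \lambda_n \int_\Omega u_{\lambda_n}^{q+1}\, dx.
\end{equation*}

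Since $b\geq 0$, the left-hand side is bounded below by $\|u_{\lambda_n}\|_{\Wspz}^p$. On the right, the hypothesis $q>p-1$ together with the uniform $L^\infty$ bound yields $u_{\lambda_n}^{q+1}\leq M^{q+1-p}\, u_{\lambda_n}^p$ pointwise. Consequently,
\begin{equation*}
\|u_{\lambda_n}\|_{\Wspz}^p \leq \lambda_n M^{q+1-p}\, \|u_{\lambda_n}\|_{L^p(\Omega)}^p.
\end{equation*}
I would then invoke the fractional Poincaré inequality, i.e.\ the variational definition of $\lambda_1(\Omega)>0$ recalled in the preliminaries, which gives $\|u_{\lambda_n}\|_{L^p(\Omega)}^p\leq \lambda_1(\Omega)^{-1}\|u_{\lambda_n}\|_{\Wspz}^p$. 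Substituting produces the self-improving estimate
\begin{equation*}
\|u_{\lambda_n}\|_{\Wspz}^p \leq \frac{\lambda_n M^{q+1-p}}{\lambda_1(\Omega)}\, \|u_{\lambda_n}\|_{\Wspz}^p.
\end{equation*}

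For $n$ large enough, $\lambda_n M^{q+1-p}/\lambda_1(\Omega) < 1$, so the inequality forces $\|u_{\lambda_n}\|_{\Wspz}=0$, hence $u_{\lambda_n}\equiv 0$. This contradicts the requirement $u_{\lambda_n}>0$ in $\Omega$ from the definition of a weak solution of \eqref{PbLE}, and therefore no such bounded sequence can exist. Since this rules out any sequence $\lambda_n\to 0$ along which $\|u_{\lambda_n}\|_\infty$ stays bounded, we obtain $\|u_\lambda\|_\infty\to +\infty$ as $\lambda\to 0$, $\lambda\in\Sigma_q$.

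There is essentially no serious obstacle here: the argument is a textbook energy/Poincaré estimate, and the only place where the superlinear hypothesis $q>p-1$ is decisive is in the step $u^{q+1}\leq M^{q+1-p}u^p$, which converts the superlinear nonlinearity into a linear-in-$L^p$ quantity absorbable by the Dirichlet seminorm. The positivity statement $u_\lambda>0$ from Theorems \ref{thexi1}--\ref{thexi2} is crucial to close the contradiction, as is the strict positivity of $\lambda_1(\Omega)$.
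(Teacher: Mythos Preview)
Your argument is correct and is essentially identical to the paper's own proof: both test \eqref{ws} with $u_{\lambda_n}$, drop the nonnegative $b$-term, use $u^{q+1}\leq M^{q+1-p}u^p$ to reach $\|u_{\lambda_n}\|_{\Wspz}^p\leq \lambda_n M^{q+1-p}\|u_{\lambda_n}\|_{L^p}^p$, and conclude by Poincar\'e. Your write-up is in fact slightly cleaner than the paper's version.
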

\begin{proof}
Let $(\lambda_n)$ be a minimizing sequence of $\Sigma_q$. 
It suffices to show that any subsequences of $(u_{n}):=(u_{\lambda_n})$ are unbounded. For that, we argue by contradiction assuming that there exists $(\lambda_{n'})$ such that $||u_{n'}||_\infty\leq M$ for any $n'$. Taking $u_{n'}$ as test function in \eqref{ws}, this yields 
\begin{align*}
||u_{n'}||^p_\Wspz& \leq \lambda_{n'} \int_\Omega u_{n'}^q\,dx \leq \lambda_{n'} M^{q+1-p} ||u_{n'}||^{p}_{L^p(\Omega)}.
\end{align*}
Hence, for $n'$ large enough we obtain a contradiction with the Poincaré inequality  since $u_{n'}\not\equiv0$.
\end{proof}

\section{The parabolic problem}\label{sec3}

\subsection{Existence of solutions and properties}\label{sec31}
In this section, we prove Theorem \ref{mainpara}. We start by proving a local existence result.

\begin{theorem}\label{exipara}
Let $u_0\in \Wspz\cap L^\infty(\Omega)$ be a nonnegative function and let $q>0$. Then, there exists $T_{*}\in (0,+\infty]$ such that, for any $T<T_*$, \eqref{pbp} admits a weak solution on $Q_T$ belonging to $C([0,T];L^m(\Omega))$ for any $m\in [1,+\infty)$. 
\end{theorem}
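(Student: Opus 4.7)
The plan is to follow the discretization strategy of \cite{MR4913772}: construct a sequence of approximate solutions by an implicit Euler scheme in time, derive uniform $L^\infty$ and energy estimates, and pass to the limit using monotonicity and Aubin--Lions compactness. The maximal time $T_*$ will be determined by an ODE comparison encoding the superlinear source $\lambda u^q$.

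\textbf{Discrete scheme and pointwise bounds.} Fix $N\in \mathbb N$, set $\dt=T/N$ and $u_0^{\dt}=u_0$. Given $u_n^{\dt}\geq 0$, we define $u_{n+1}^{\dt}\in \Wspz$ as the (unique) minimizer of the strictly convex coercive functional
\[
v \mapsto \frac{1}{2\dt}\|v-u_n^{\dt}\|_{L^2}^2 + \frac{1}{p}\|v\|_{\Wspz}^p + \frac{1}{r+1}\int_\Omega b |v|^{r+1}\,dx - \lambda \int_\Omega (u_n^{\dt})^q v\,dx,
\]
so that $u_{n+1}^{\dt}$ solves $(u_{n+1}^{\dt}-u_n^{\dt})/\dt + \mathcal A u_{n+1}^{\dt}=\lambda (u_n^{\dt})^q$ weakly in $\Wspz$. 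Testing with $(u_{n+1}^{\dt})_-$ and invoking Remark \ref{pmbis} yields $u_{n+1}^{\dt}\geq 0$. For the pointwise upper bound, let $M$ be the solution to the ODE $M'=\lambda M^q$, $M(0)=\|u_0\|_{L^\infty}+1$, defined on a maximal interval $[0,T_*)$ (with $T_*=+\infty$ if $q\leq 1$, and $T_*$ finite and explicit if $q>1$). For any $T<T_*$ and $\dt$ small enough, the constants $M_n:=M(n\dt)$ satisfy $M_{n+1}-M_n\geq \dt\,\lambda M_n^q$, hence $\mathcal A(M_{n+1})\geq 0$ makes $(M_n)$ a supersolution of the scheme; Remark \ref{pmbis} gives $0\leq u_n^{\dt}\leq M_n\leq C(T)$.

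\textbf{Energy estimates and compactness.} Testing the scheme with $(u_{n+1}^{\dt}-u_n^{\dt})$ and using the convexity inequality $\langle \mathcal A u_{n+1}^{\dt}, u_{n+1}^{\dt}-u_n^{\dt}\rangle \geq \mathcal E(u_{n+1}^{\dt})-\mathcal E(u_n^{\dt})$ for the energy $\mathcal E(v)=\frac 1p\|v\|_\Wspz^p+\frac{1}{r+1}\int_\Omega b|v|^{r+1}$, together with Cauchy--Schwarz and the $L^\infty$ bound, yields
\[
\sum_{n=0}^{N-1}\frac{\|u_{n+1}^{\dt}-u_n^{\dt}\|_{L^2}^2}{\dt} + \max_{0\leq n\leq N}\mathcal E(u_n^{\dt}) \leq C(u_0,T,\lambda,\|b\|_{\infty}).
\]
Let $u^{\dt}$ denote the piecewise affine in time interpolation and $\bar u^{\dt}$ the piecewise constant one. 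Then $\partial_t u^{\dt}$ is bounded in $L^2(Q_T)$, $\bar u^{\dt}$ is bounded in $L^\infty(0,T;\Wspz\cap L^\infty(\Omega))$, and an Aubin--Lions argument (using the compact embedding $\Wspz\hookrightarrow L^p(\Omega)$) provides a subsequence converging strongly in $L^p(Q_T)$ and a.e. to some $u\in X_T$.

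\textbf{Passing to the limit and final regularity.} The strong a.e. convergence plus the $L^\infty$ bound pass the source terms $\lambda(u_n^{\dt})^q$ and $b(u_{n+1}^{\dt})^r$ to the limit by dominated convergence. For the nonlocal operator, weak convergence of $\pfrac \bar u^{\dt}$ in the appropriate dual space is combined with the Minty monotonicity trick (as in \cite[Section 3]{MR4913772}) to identify the limit with $\pfrac u$, so that $u$ solves \eqref{pbp} in the sense of Definition \ref{defsol2}. The initial condition $u(0)=u_0$ is preserved since $u^{\dt}(0)=u_0$ and $u^{\dt}\to u$ in $C([0,T];L^2(\Omega))$ through the time-derivative bound. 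Finally, $u\in L^\infty(Q_T)\cap C([0,T];L^2(\Omega))$ interpolates to $C([0,T];L^m(\Omega))$ for every $m\in[1,+\infty)$.

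\textbf{Main obstacle.} The delicate point is the $L^\infty$ comparison at the discrete level: one must verify that the ODE $M'=\lambda M^q$ genuinely provides a discrete supersolution (requiring an explicit Euler inequality that matches the implicit scheme), and that the resulting maximal time $T_*$ depends only on $\|u_0\|_{L^\infty}$ and $\lambda$. Handling the superlinear case $q>1$ then consists in stopping the scheme before the ODE blowup; the monotone structure of $\mathcal A$ takes care of the rest via Minty.
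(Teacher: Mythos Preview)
Your proposal is correct and follows the same overall architecture as the paper's proof: an implicit-in-$\mathcal A$, explicit-in-source Euler scheme, uniform $L^\infty$ and energy bounds, and passage to the limit via monotonicity/compactness as in \cite{MR4913772}.

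The one noteworthy difference is how the $L^\infty$ bound (and hence $T_*$) is obtained. The paper first \emph{truncates} the source to $\lambda\min(R,u)^q$, which makes the right-hand side globally bounded by $\lambda R^q$; the discrete scheme then gives the linear bound $\|u_n\|_{L^\infty}\leq \|u_0\|_{L^\infty}+\lambda T R^q$, and $T_*$ is extracted \emph{a posteriori} by choosing $R$ so that this bound stays below $R$ (the truncation becomes inactive). You instead skip the truncation and compare the untruncated scheme directly with the ODE barrier $M'=\lambda M^q$, using convexity of $M$ to get the discrete inequality $M_{n+1}-M_n\geq \dt\,\lambda M_n^q$. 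Both routes yield the same $T_*$; the paper's truncation is slightly more robust (no need to justify applying the comparison principle of Remark~\ref{pmbis} with the constant $M_{n+1}$, which is not literally in $W^{s,p}(\Rn)$), while your ODE barrier is more direct and makes the dependence $T_*=T_*(\|u_0\|_{L^\infty},\lambda,q)$ explicit from the outset.
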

\begin{proof}
The proof follows the ones of Theorem 3.1 and Corollary 3.2 in \cite{MR4913772} adapting some points. For the convenience of the readers, we only give the idea of the proof and the important points which are necessary for the other proofs. First, we replace the operator $\pfrac$ by the operator $\mathcal A$ which satisfies the maximum principle by Remark \ref{pmbis} and we define, for some $R>0$, the truncated problem
\begin{equation*}\label{pbpR}\tag{$\mathcal P_{\lambda,R}$}
   \left\{ \begin{array}{l l}
     \partial_t u+ \mathcal A u= \lambda \min(R,u)^q &\text{in} \;Q_T,\\
      u=0 &\text{in} \;(0,T)\times\Omega^c,\\
      u\geq 0 &\text{in} \;Q_T,\\
      u(0)=u_0 &\text{in } \Omega.
    \end{array}\right.
\end{equation*}
We also consider the associated discretization scheme of \eqref{pbpR}: 
\begin{equation}\label{approx}
\left\{\begin{array}{l l}
\dfrac{u_n-u_{n-1}}{\dt}+\mathcal A u_n=\lambda \min(R,u_{n-1})^q & \mbox{ in } \Omega,\\
u_n=0 & \mbox{ in } \Rn\backslash \Omega,\\
u_n\geq 0 & \mbox{ in } \Omega
\end{array}\right.
\end{equation}
where $\dt=\frac{T}{N}$ for some $N\in \mathbb N^*$. The sequence $(u_n)$ is well defined and bounded in $L^\infty(\Omega)$, more precisely, we have {for $1\leq n\leq N$}
\begin{equation}\label{pas9}
\|u_n\|_{L^\infty(\Omega)}\leq \|u_0\|_{L^\infty(\Omega)}+\lambda T R^q.
\end{equation}
Then, we define the functions $u_\dt =u_n$  and $\tilde u _\dt=\frac{\cdot-t_{n-1}}{\dt}(u_n-u_{n-1})+u_{n-1}$ defined on $ [(n-1)\dt,n\dt)$ for $1\leq n\leq N$ which satisfy
\begin{equation*}\label{pbdt}
\partial_t \tilde u_\dt+\mathcal A u_\dt=\lambda u_{\dt}^q(\cdot-\dt) \text{ in } Q_T.
\end{equation*}
As in the proof of Theorem 3.1 in \cite{MR4913772}, we obtain estimates of the sequences $(u_\dt)_{\dt}$ and $(\tilde u_\dt)_{\dt}$ which imply by compactness arguments the convergence in suitable spaces. Hence, for any $T>0$ and for any $R$, we get a weak solution (in the sense of Definition \ref{defsol2}) of the problem \eqref{pbpR} $u_R\in C([0,T];L^m(\Omega))$, for any $m\in [1,+\infty)$   
 limits of $(u_\dt)_{\dt}$ and $(\tilde u_\dt)_{\dt}$ and satisfying by \eqref{pas9}
\begin{equation}\label{9}
\|u_R\|_{L^\infty(Q_T)}\leq \|u_0\|_{L^\infty(\Omega)}+\lambda T R^q.
\end{equation}
Finally, as in the proof of Corollary 3.2 of \cite{MR4913772}, we deduce the existence of $T_*=T_*(q,u_0,\lambda)$  such that for suitable $R>0$, $\|u_R\|_{L^\infty(Q_T)}<R$ for any $T<T_*$ and hence $u_R$ is a weak solution of \eqref{pbp} on $Q_T$.
\end{proof}
\begin{remark}\label{parab}
As in Corollary 3.2 of \cite{MR4913772}, $T_*$ does not depend on $u_0$ if $q\leq 1$ more precisely we have that $T_*=+\infty$ when $q<1$ and $T_*=\frac1\lambda$ when $q=1$.
\end{remark}
Under additional conditions, we establish a existence result for any time.
\begin{theorem}\label{para1}
Let $u_0\in \Wspz\cap L^\infty(\Omega)$ be a nonnegative function. Assume $q\in(0, p-1]$ and $\lambda< \sup\Lambda_q$.  Then, for any $T>0$, \eqref{pbp} admits a weak solution on $Q_T$ belonging to $C([0,T];L^m(\Omega))$ for any $m\in [1,+\infty)$. 
\end{theorem}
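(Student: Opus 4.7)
The plan is to upgrade the local solution given by Theorem~\ref{exipara} to a global one by producing a stationary $L^\infty$ supersolution $\bar u\geq u_0$, comparing the truncated solutions $u_R$ of \eqref{pbpR} against $\bar u$ through the discretization \eqref{approx}, and then choosing $R>\|\bar u\|_\infty$ so that the truncation in \eqref{pbpR} becomes inactive; this forces $u_R$ to solve \eqref{pbp} on every $Q_T$.

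The supersolution is built on a slightly enlarged regular domain $\tilde\Omega$ with $\overline\Omega\Subset\tilde\Omega$. First I would fix $\mu\in(\lambda,\sup\Lambda_q)$ and extend $b$ by a positive constant on $\tilde\Omega\setminus\Omega$, producing $\tilde b$ with $\tilde b^{-1}(\{0\})=\overline{\Omega_0}$ unchanged. In the subcase $q=p-1$, I would further enlarge $\tilde\Omega$ until $\lambda_1(\tilde\Omega)<\mu<\lambda_1(\Omega_0)=\sup\Lambda_q$, which is possible because $\lambda_1(\tilde\Omega)\to 0$ as the domain grows while the extension keeps $\sup\Lambda_q$ fixed. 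Theorem~\ref{thexi1} then provides $\tilde u_\mu\in W^{s,p}_0(\tilde\Omega)\cap C^s(\Rn)$ with $\tilde u_\mu\geq c\,d(\cdot,\tilde\Omega^c)^s$, and because $\overline\Omega\Subset\tilde\Omega$ this forces $\tilde u_\mu\geq\eta>0$ throughout $\Omega$. Setting $M:=\max(1,\|u_0\|_\infty/\eta)$ and $\bar u:=M\tilde u_\mu$, I would verify that $\bar u\geq u_0$ on $\Omega$, $\bar u\geq 0$ on $\Omega^c$, and that $\bar u$ is a stationary supersolution of \eqref{pbp}: by the $(p-1)$-homogeneity of $\pfrac$ and the equation for $\tilde u_\mu$, the claim reduces to the algebraic inequality
\[
(M^{p-1}\mu-\lambda M^q)\,\tilde u_\mu^q\;\geq\;(M^{p-1}-M^r)\,b\,\tilde u_\mu^r,
\]
whose left-hand side is nonnegative (since $M\geq 1$, $p-1\geq q$, $\mu>\lambda$) and right-hand side nonpositive (since $M\geq 1$ and $p-1<r$).

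For the comparison, rather than develop a parabolic maximum principle from scratch, I would revisit the discretization \eqref{approx} that underlies $u_R$. A direct induction on $n$, starting from $u_0\leq\bar u$ and using the monotonicity of $v\mapsto v/\dt+\lambda\min(R,v)^q$, reduces each step to applying the elliptic comparison principle of Remark~\ref{pmbis} to the operator $v\mapsto\mathcal A v+v/\dt$ (whose comparison property is immediate from that of $\mathcal A$ since $v\mapsto v/\dt$ is strictly monotone). This yields $u_n\leq\bar u$ a.e.\ in $\Omega$ for every $n$, and passing to the limit $\dt\to 0$ gives $u_R\leq\bar u$ a.e.\ in $Q_T$. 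Choosing any $R>M\|\tilde u_\mu\|_\infty\geq\|\bar u\|_\infty$ then makes $\min(R,u_R)=u_R$, so $u_R$ is a weak solution of \eqref{pbp} on $Q_T$; since $T$ is arbitrary we obtain a global solution, and the regularity $u_R\in C([0,T];L^m(\Omega))$ for $m\in[1,\infty)$ is inherited from the limit procedure of Theorem~\ref{exipara}.

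The main obstacle is the construction of $\bar u$: a naive choice $Mu_\mu$ based on the elliptic solution on $\Omega$ itself cannot dominate an arbitrary $u_0\in\Wspz\cap L^\infty$, because $u_\mu$ decays like $d(\cdot,\Omega^c)^s$ at $\partial\Omega$ while $u_0$ need not satisfy any matching pointwise boundary decay. The larger-domain trick repairs this by pushing the Hopf degeneracy away from $\Omega$; the freedom in enlarging $\tilde\Omega$ is what keeps $\mu$ inside the admissibility range $\Lambda_q(\tilde\Omega)$ in the delicate case $q=p-1$, while the extension of $b$ by a positive constant is what anchors $\sup\Lambda_q$ at $\lambda_1(\Omega_0)$.
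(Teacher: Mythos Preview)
Your proposal is correct and follows essentially the same route as the paper: build a stationary supersolution $\bar u=M\tilde u_\mu$ from the elliptic solution on an enlarged domain $\tilde\Omega\supset\overline\Omega$ with $b$ extended positively so that $\sup\Lambda_q$ is unchanged, exploit the Hopf lower bound to make $\bar u\geq u_0$, then compare through the discretization \eqref{approx} via Remark~\ref{pmbis} and take $R\geq\|\bar u\|_\infty$. Your explicit algebraic verification of the supersolution inequality and the discussion of why the larger-domain trick is needed (to avoid the $d(\cdot,\Omega^c)^s$ degeneracy) spell out points the paper leaves implicit, but the strategy is identical.
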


\begin{proof}
The proof is a close adaptation of the one of Theorem 1.4 in \cite{MR4913772}. For this reason, we skip it however we only mention the main changes.\\
First,  we establish the existence of a weak solution $u_R$ of \eqref{pbpR}. 
As in the proof of Theorem 1.4 in \cite{MR4913772}, it is sufficient to prove that $u_R$ is uniformly bounded independently of $T$ and $R$ in $Q_T$. For that, it suffices to obtain a uniform bound of the sequence $(u_n)$ defined by \eqref{approx}.\\
Let $\tilde{\lambda}\in \Lambda_q$ such that $\tilde \lambda>\lambda$ and let $\tilde u$ be the solution of $(\mathcal E_{\tilde \lambda})$ in  $\widetilde{\mathcal  O}  \supset \overline \Omega$ where the function $b$ is extended by a positive function $\tilde b$ on $\widetilde{\mathcal  O} \backslash \Omega$ such that $\tilde b^{-1}(\{0\})=\overline \Omega_0$. The function  $\tilde u\in W^{s,p}_0(\widetilde{\mathcal  O})$ is well defined by Theorem \ref{thexi1} because $\sup \Lambda_q$ only depends on $\Omega_0$.\\
Then, the function $\overline{u}=\alpha \tilde{u}$ with $\alpha\geq 1$ is a supersolution of \eqref{PbLE} in $\Omega$ by Proposition \ref{mono} and since  $q\leq p-1< r$. \\
Moreover, by Theorem \ref{thexi1}, $\tilde{u}\geq C$ in $\overline\Omega$ hence for $\alpha$ large enough  we have $\overline{u}\geq u_0$ {\it a.e.} in $\Omega$ and we choose $R\geq\|\overline u\|_{L^\infty(\Omega)}$. \\
Hence by induction argument, we have for any $\varphi\in \Wspz$, $\varphi \geq 0$.
\begin{equation*}
\begin{split}
\int_\Omega \frac{u_n-\overline{u}}{\dt} \varphi \,dxdt+\langle \mathcal Au_n-\mathcal A\overline u,\varphi\rangle=\int_\Omega &\frac{u_{n-1}-\overline{u}}{\dt}\varphi \,dxdt\\
&+\lambda \int_\Omega (u_{n-1}^q-\overline{u}^q)\varphi \,dxdt\leq 0,
\end{split}
\end{equation*}
and we deduce that for any $n$ 
\begin{equation}\label{ot9}
0\leq u_n\leq \overline u \mbox{  and  then } \|u_R\|_{L^\infty(Q_T)}\leq \|\overline u\|_{L^\infty(\Omega)}.
\end{equation} 
Finally, by the choice of $R$, we deduce that $u_R$ is a weak solution of \ref{pbp} in $Q_T$ for any $T>0$.
\end{proof}

\begin{remark}\label{compari}
For any $q>0$, we have a comparison principle about the solutions obtained by Theorem \ref{exipara} and \ref{para1}. More precisely, consider $\Omega \subseteq\widetilde \Omega$, $u_0\in \Wspz\cap L^\infty(\Omega)$ and $\tilde u_0\in W^{s,p}_0(\widetilde \Omega)\cap L^\infty(\widetilde \Omega)$ such that $0\leq u_0\leq \tilde u_0$, then the solutions $u$, $\tilde u$ obtained by Theorem \ref{exipara} and \ref{para1} are ordered {\it i.e.} $u\leq \tilde u$.  Indeed the comparison principle arises from the discretization scheme, Remark \ref{pmbis} and an induction argument.
\end{remark}

\begin{proposition}\label{para2}
Let $u_0\in \Wspz\cap L^\infty(\Omega)$ be a nonnegative function. Assume either $q\in(0, p-1]$ and $\lambda< \sup\Lambda_q$ or $q\leq 1$.  Then, there exists $u_{glob}\in L^\infty_{loc}((0,+\infty),L^\infty(\Omega))$ a global solution of \eqref{pbp} {\it i.e.} $u_{glob}$ is a weak solution of  \eqref{pbp} on $Q_T$ for any $T>0$. 
Moreover $u_{glob}$ belongs to $C([0,\infty);L^m(\Omega))$ for any $m\in [1,+\infty)$.
\end{proposition}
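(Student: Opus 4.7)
The plan is to combine Theorems \ref{exipara} and \ref{para1} via uniform-in-$T$ $L^\infty$ bounds, then glue the resulting local solutions into a single global one. Under the first hypothesis ($q\in(0,p-1]$ and $\lambda<\sup\Lambda_q$), Theorem \ref{para1} already provides, for every $T>0$, a weak solution $u^T$ on $Q_T$ with the $T$-independent bound $\|u^T\|_{L^\infty(Q_T)}\leq \|\overline u\|_{L^\infty(\Omega)}$, since the elliptic supersolution $\overline u=\alpha\tilde u$ used there depends only on $u_0$, $\lambda$, $q$, and $\Omega$, not on $T$.

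Under the second hypothesis $q\leq 1$, I would construct a spatially-constant, time-dependent supersolution by integrating the scalar ODE $V'(t)=\lambda V(t)^q$ with $V(0)>\|u_0\|_{L^\infty(\Omega)}$. This admits the closed form $V(t)=\bigl(V(0)^{1-q}+\lambda(1-q)t\bigr)^{1/(1-q)}$ if $q<1$, and $V(t)=V(0)e^{\lambda t}$ if $q=1$, both finite and nondecreasing on $[0,+\infty)$. For any fixed $T>0$, I would set $R=V(T)$ in the truncated discretization \eqref{approx} and prove by induction on $n$, using Remark \ref{pmbis} applied to the operator $\mathcal A+\tfrac{1}{\Delta t}\,\mathrm{Id}$, that $u_n\leq V(n\Delta t)$; the induction step reduces to the inequality $V_n-V_{n-1}\geq \Delta t\,\lambda V_{n-1}^q$, which is immediate from $V'=\lambda V^q$ being nondecreasing. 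Passing to the limit $\Delta t\to 0$ yields $\|u_R\|_{L^\infty(Q_T)}\leq V(T)=R$, so $u_R$ is a weak solution of \eqref{pbp} on $Q_T$, exactly as in the conclusion of the proof of Theorem \ref{exipara}.

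For the gluing step, I would fix $T_2>T_1>0$ and choose a single truncation level $R$ valid for both intervals ($R=\|\overline u\|_\infty$ in the first case, $R=V(T_2)$ in the second). Then the discretization sequences $(u_n)$ producing $u^{T_1}$ and $u^{T_2}$ are identical on the indices covering $[0,T_1]$, so their $\Delta t\to 0$ limits coincide on $Q_{T_1}$. Setting $u_{glob}(t):=u^T(t)$ for any $T>t$ therefore defines an unambiguous global weak solution, and the uniform $L^\infty$ bound on each $Q_T$ immediately gives $u_{glob}\in L^\infty_{\mathrm{loc}}((0,+\infty),L^\infty(\Omega))$.

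The step I expect to be the main obstacle is precisely this compatibility of $u^{T_1}$ and $u^{T_2}$, since Theorem \ref{mainpara}(ii) only guarantees uniqueness of the weak solution when $q\geq 1$, leaving the case $q<1$ uncovered. The workaround is to remain at the discretization level: because the sequence $(u_n)$ is fully determined by $u_0$, $\lambda$, and the common $R$, the restriction of the longer sequence literally equals the shorter one, so consistency is automatic and no uniqueness theorem for weak solutions is invoked.
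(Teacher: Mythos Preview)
Your approach is genuinely different from the paper's and largely sound, but the gluing step is where the two strategies diverge most sharply, and your version has a real gap there.

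The paper does not try to build $u^T$ on each $Q_T$ and then reconcile them. Instead it fixes a single $\tilde T\in(0,\tfrac1\lambda)$, independent of the initial datum, and observes that under either hypothesis Theorem~\ref{exipara} (via Remark~\ref{parab}, which gives $T_*=+\infty$ for $q<1$ and $T_*=1/\lambda$ for $q=1$) or Theorem~\ref{para1} produces a solution on $[0,\tilde T]$ from any admissible initial condition. One then iterates: solve on $[0,\tilde T]$ with datum $u_0$, then on $[0,\tilde T]$ again with datum $u_1(\tilde T)\in\Wspz\cap L^\infty(\Omega)$, and so on, concatenating $u_{glob}(t)=u_{n+1}(t-n\tilde T)$ on $[n\tilde T,(n+1)\tilde T]$. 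No compatibility question ever arises, because the pieces are constructed sequentially with matched endpoints rather than as restrictions of longer solutions.

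Your ODE barrier $V'=\lambda V^q$ is a perfectly good substitute for Remark~\ref{parab} and in fact yields the sharper statement that \eqref{pbpR} with $R=V(T)$ already solves \eqref{pbp} on the whole of $Q_T$ when $q\le1$ (one small technicality: Remark~\ref{pmbis} as stated asks $v\in W^{s,p}(\Rn)$, which the constant $V_n$ is not, though the underlying test-function argument with $(u_n-V_n)_+\in\Wspz$ goes through). The difficulty is the next step. You claim that for a common $R$ the discrete sequences on $[0,T_1]$ and $[0,T_2]$ literally coincide and hence so do their limits. The first assertion is true only for a common $\Delta t$; but the solutions $u^{T_1}$ and $u^{T_2}$ are extracted as limits along possibly different subsequences of $\Delta t\to0$, and without uniqueness (the case $q<1$) there is no a~priori reason the limits agree on $Q_{T_1}$. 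This is fixable by a diagonal extraction over $T=1,2,3,\dots$ along a single sequence $\Delta t_k\to0$, but that argument is missing from your write-up. The paper's iteration sidesteps the issue entirely, which is what makes it cleaner here.
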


\begin{proof}
Let $\widetilde T\in (0,\frac{1}{\lambda})$ be fixed and independent of $u_0$. By Theorem \ref{exipara}, \ref{para1} and Remark \ref{parab}, for any $q\leq 1$ or $q\leq p-1$ and $\lambda< \sup \Lambda_q$, there exists a weak solution $u_1$ of \eqref{pbp} on $Q_{\widetilde T}$. By Definition \ref{defsol2}, $u_1(\widetilde T,\cdot)$ belongs to $L^\infty(\Omega)\cap \Wspz$ hence we apply either Theorem \ref{exipara} or Theorem \ref{para1} with the initial condition $u_1(\widetilde T,\cdot)$ and we get a weak solution $u_2$ of \eqref{pbp} on $Q_{\widetilde T}$.\\
Then we proceed by induction argument to construct a sequence $(u_n)\subset X_{\widetilde T}$ such that, for any $n\in \mathbb N^*$, $u_n$ is a weak solution of \eqref{pbp} on $Q_{\widetilde T}$ with the initial data $u_n(0)=u_{n-1}(\widetilde T)$.\\
We conclude defining $u_{glob}$ as follows $u_{glob}= u_{n+1}(\cdot-n\widetilde T,\cdot)$ on the interval $[n\widetilde T, (n+1)\widetilde T]\times \Omega$.    
\end{proof}
\begin{remark}
For $q\in(0, p-1]$ and $\lambda< \sup\Lambda_q$, $u_{glob}\in L^\infty([0,\infty)\times \Omega)$. 
\end{remark}
Now we deal with the question of the uniqueness establishing the following result:
\begin{proposition}\label{compara}
Let $u_0\in \Wspz\cap L^\infty(\Omega)$ be a nonnegative function. Let $T>0$ and let $q\geq 1$. Then, \eqref{pbp} admits at most a weak solution in the sense of Definition \ref{defsol2}.
\end{proposition}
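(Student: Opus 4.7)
\textbf{Proof sketch of Proposition \ref{compara}.} Let $u_1,u_2\in X_T$ be two weak solutions of \eqref{pbp} sharing the initial datum $u_0$. The plan is to subtract their weak formulations, insert $\phi=u_1-u_2$ as test function, and close the resulting estimate by Gronwall's lemma, exploiting that $x\mapsto x^q$ is Lipschitz on bounded intervals when $q\geq 1$.

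First I would verify that $\phi:=u_1-u_2$ is admissible in Definition \ref{defsol2}: since $u_1,u_2\in L^\infty(0,T;\Wspz)\cap L^\infty(Q_T)$, the difference lies in $L^1(0,T;\Wspz)\cap L^2(Q_T)$. Subtracting \eqref{fv2} written for $u_1$ and for $u_2$ and testing against $\phi$ at an arbitrary $t\in[0,T]$ gives
\begin{equation*}
\int_0^t\!\!\int_\Omega \partial_t(u_1-u_2)(u_1-u_2)\,dxd\tau + \int_0^t\langle\pfrac u_1-\pfrac u_2,\,u_1-u_2\rangle\,d\tau = \int_0^t\!\!\int_\Omega \bigl[\lambda(u_1^q-u_2^q) - b(u_1^r-u_2^r)\bigr](u_1-u_2)\,dxd\tau.
\end{equation*}

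Next I would handle each term. Since $\partial_t u_i\in L^2(Q_T)$ and the initial data agree, the first term equals $\tfrac12\|u_1(t)-u_2(t)\|_{L^2(\Omega)}^2$ by the standard chain rule for functions in $H^1(0,T;L^2(\Omega))$. The operator term is nonnegative by the pointwise monotonicity $(|a|^{p-2}a-|b|^{p-2}b)(a-b)\geq 0$ and the definition of $\langle\pfrac\cdot,\cdot\rangle$. On the right-hand side, the reaction term involving $b$ has the correct sign to be dropped: indeed $b\geq 0$ and $x\mapsto x^r$ is nondecreasing on $[0,\infty)$, so $-b(u_1^r-u_2^r)(u_1-u_2)\leq 0$.

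The remaining $u^q$-term is where $q\geq 1$ is used. Both solutions are bounded in $Q_T$; setting $M:=\max(\|u_1\|_{L^\infty(Q_T)},\|u_2\|_{L^\infty(Q_T)})$, the mean value theorem applied to $x\mapsto x^q$ on $[0,M]$ gives $|u_1^q-u_2^q|\leq qM^{q-1}|u_1-u_2|$. Consequently
\begin{equation*}
\tfrac12\|u_1(t)-u_2(t)\|_{L^2(\Omega)}^2 \leq \lambda qM^{q-1}\int_0^t\|u_1(\tau)-u_2(\tau)\|_{L^2(\Omega)}^2\,d\tau,
\end{equation*}
and Gronwall's lemma forces $u_1=u_2$ a.e.\ in $Q_T$. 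The one delicate point is justifying the chain rule identity for $w:=u_1-u_2$ with $w\in L^\infty(0,T;\Wspz)$ and $\partial_t w\in L^2(Q_T)$; this is standard via time mollification or the Lions--Magenes lemma. The argument breaks down for $q<1$ since $x\mapsto x^q$ is then only Hölder continuous near $0$, which is precisely why the statement restricts to $q\geq 1$.
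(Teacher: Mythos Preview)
Your proof is correct and follows essentially the same approach as the paper: subtract the two weak formulations, test with $u_1-u_2$, discard the monotone $\pfrac$ and $b$ terms by sign, use the local Lipschitz continuity of $x\mapsto x^q$ for $q\geq 1$ together with the $L^\infty$ bound, and conclude by Gronwall. Your version is in fact slightly more detailed than the paper's (you explicitly check admissibility of the test function and flag the chain rule justification), but the argument is the same.
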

\begin{proof}
Let $u$, $v$ be two solutions of \eqref{pbp} on $Q_T$. Taking $\phi=u-v$ in \eqref{fv2},  we obtain  for any $t\leq T$:
$$\frac12\|(u-v)(t)\|^2_{L^2(\Omega)}\leq \int_0^t\int_\Omega\lambda(u^q-v^q)(u-v)dxd\tau.$$
Since  $u$, $v\in L^\infty(Q_T)$ and the mapping $y\mapsto y^q$ is locally Lipschitz in $\mathbb R$, the previous inequality yields
$$\frac12\|(u-v)(t)\|^2_{L^2(\Omega)}\leq C\int_0^t\int_\Omega\|(u-v)(\tau)\|^2_{L^2(\Omega)}d\tau.$$
Finally, we conclude applying the Gronwall Lemma. 
\end{proof}
We introduce the energy functional of \eqref{pbp} given, for any $v\in \Wspz$, by:
$$E(v)=\frac{1}{p}\|v\|^p_\Wspz-\frac{\lambda}{q+1}\|v\|^{q+1}_{L^
{q+1}(\Omega)}+\frac{1}{r+1}\int_\Omega bv^{r+1}\,dx.$$
Then, we obtain
\begin{proposition}\label{propReg}
Let $u$ be the weak solution of \eqref{pbp} obtained by Theorem \ref{exipara} or \ref{para1} in $Q_T$ for some suitable $T>0$ with the initial data $u_0\in \Wspz\cap L^\infty(\Omega)$, $u_0\geq 0$ {\it a.e.} in $\Omega$. Then, $u\in C([0,T);\Wspz)$ and
\begin{equation}\label{energyineq}
\int_{t_0}^{t_1} \int_\Omega |\partial_t u|^2dxdt+E(u(t_1)) = E(u(t_0)),
\end{equation}
for any $t_0$, $t_1\in [0,T)$ such that $t_1 \geq t_0$. Additionally, if $q\geq 1$ and $u_0\in D(\mathcal A)$ then  $u$ belongs to $C([0,T];C(\overline \Omega))$.
\end{proposition}
\begin{proof}
Let $t_0\in [0,T)$, Proposition 3.4 of \cite{MR4913772} implies that for any $t\in (t_0,T)$ 
\begin{equation}\label{energyineq1}
\begin{split}
\int_{t_0}^{t} \int_\Omega  |\partial_t u|^2\,dxdt+&\frac{1}{p}\|u(t)\|^p_\Wspz \\
&\leq \frac{1}{p}\|u(t_0)\|^p_\Wspz +\int_{t_0}^{t}\int_\Omega (\lambda u^q-bu^r) \partial_t u\,dxdt,
\end{split}
\end{equation}
thus
\begin{equation*}\label{energyineq2}
\int_{t_0}^{t} \int_\Omega |\partial_t u|^2dxdt+E(u(t)) \leq E(u(t_0)).
\end{equation*}
Hence passing to the limit in the previous inequality as $t_k\to t_0$ with for any $k$, $t_k>t_0$, we deduce that $\limsup_{k\to\infty}\|u(t_k)\|_\Wspz\leq \|u(t_0)\|_\Wspz$. On the other hand, $u\in L^\infty([0,T];\Wspz)\cap C([0,T];L^m(\Omega))$ for any $m\in [1,+\infty)$ thus the mapping $t\mapsto u(t)$ is weakly continuous from $[0,T]$ to $\Wspz$. We finally get that the right continuity of $u$ in $t_0$, for any $t_0\in [0,T)$.\\
Let $h>0$ be small enough. Choosing  $\tau_h u=\frac{u(\cdot+h,\cdot)-u}{h}$ as test function in \eqref{fv2} first on $[0,t_0]$ and then on $[0,t]$, this yields subtracting the both equations and by convexity of $\frac1p\|\cdot\|^p_\Wspz$:
\begin{equation*}
\begin{split}
\int_{t_0}^{t}\int_\Omega\partial_tu \tau_h u\;dxdt&+\frac{1}{hp}\int_{t}^{t+h}\|u\|^p_\Wspz dt\\
&\geq \frac{1}{hp}\int_{t_0}^{t_0+h}\|u\|^p_\Wspz dt+\int_{t_0}^{t}\int_\Omega(\lambda u^q-bu^r) \tau_h u\;dxdt.
\end{split}
\end{equation*}
Since $\partial_t u\in L^2(Q_T)$, $u\in L^\infty(Q_T)\cap L^\infty([0,T];\Wspz)$ and by the Dominated Convergence Theorem, we pass to the limit in the previous inequality as $h\to 0$. Hence we have for any $t\in (t_0,T)$
\begin{equation*}
\begin{split}
\int_{t_0}^{t}\int_\Omega|\partial_tu|^2dxdt&+\frac1p\|u(t)\|^p_\Wspz\\
&\geq \frac1p\|u(t_0)\|^p_\Wspz+\int_{t_0}^{t}\int_\Omega(\lambda u^q-bu^r) \partial_t u \;dxdt,
\end{split}
\end{equation*}
which gives the equality in \eqref{energyineq1} and hence \eqref{energyineq} holds. \\
Then, passing to the limit in \eqref{energyineq}, for $t_k\to t$ with $t_k\leq t$ we deduce that $\limsup_{k\to\infty}\|u(t_k)\|_\Wspz= \|u(t)\|_\Wspz$. Plugging with the weak continuity of $u$ in $t$, we establish the left continuity of $u$ in $t$, for any $t\in (0,T)$. Finally we conclude that $u\in C([0,T),\Wspz).$\\
We now take $q\geq 1$ and $u_0\in {D(\mathcal A)}$ and we show that $u\in C([0,T];C(\overline \Omega))$.\\
For that, we subtract the discretization scheme \eqref{approx} at the range $n$ from the one in $n-1$ then Proposition \ref{accre} implies
$$\|u_n-u_{n-1}\|_{L^\infty(\Omega)}\leq (1+C\dt\lambda)\|u_{n-1}-u_{n-2}\|_{L^\infty(\Omega)},$$
where  the constant $C$ is independent of $n$. Using discrete Gronwall  Lemma, we get for any $n\in \mathbb N^*$
$$\|u_n-u_{n-1}\|_{L^\infty(\Omega)}\leq e^{C\lambda T}\|u_1-u_0\|_{L^\infty(\Omega)}.$$
Since $u_0\in D(\mathcal A)$, we apply again Proposition \ref{accre} and we obtain
\begin{equation}\label{ps9}
\|u_n-u_{n-1}\|_{L^\infty(\Omega)}\leq \dt e^{C\lambda T} \|\lambda u_0^q-\mathcal Au_0\|_{L^\infty(\Omega)}=C(u_0,T)\dt.
\end{equation}
Hence, plugging \eqref{pas9},\eqref{ot9} and \eqref{ps9}, we deduce that there exists a constant $C$ such that for any $n\in \mathbb N^*$
$$\|\lambda u_{n-1}^q+bu_n^r -\frac{u_n-u_{n-1}}{\dt}\|_{L^\infty(\Omega)}\leq C,$$
and hence Theorem 2.7 in \cite{hold} implies that the sequence $(u_n)_n$ is bounded in $C^s(\Rn)$.\\
The Ascoli Theorem  implies up to a subsequence, for any $t\in [0,T]$, $\tilde u_\dt(t)$ converges to $u(t)$ in $L^\infty(\Omega)$ and $\tilde u_\dt(t)$ converges $u(t)$ in $C(\overline \Omega)$. \\
Since \eqref{ps9} holds for any $n$, $(\partial_t\tilde{u}_\dt)_\dt$ is uniformly bounded in $L^\infty(Q_T)$ hence there exists a constant $C>0$ independent of $\dt$ such that for any $t$, $t_0\in[0,T]$
$$\|\tilde u _\dt (t)-\tilde u_\dt (s) \|_{L^\infty(\Omega)}\leq C|t-s|,$$
thus passing to the limit as $\dt \to 0$,  we conclude that $u\in C([0,T];C(\overline \Omega))$.
\end{proof}
\begin{remark}    
If $p\geq2$, $q\in[1,p-1]$, $\lambda< \sup\Lambda_q$ and $u_0\in \overline{D(\mathcal A)}^{L^\infty}\cap \Wspz$, then using compactness arguments, $u\in C([0,T];C(\overline \Omega)).$
\end{remark}
Finally we establish Theorem \ref{mainpara} 

\begin{proof}[Proof of Theorem \ref{mainpara}]
The existence of a solution of \eqref{pbp} is given by Theorems \ref{exipara} and \ref{para1}, the regularity by Proposition \ref{propReg}.
The first point comes from Proposition \ref{para2} and the uniqueness from Proposition \ref{compara}.
\end{proof}

\subsection{Asymptotic behaviors} \label{sec32}
\subsubsection{Case \texorpdfstring{$q\leq p-1$}{q leq p-1}}\ \\
We begin by proving Theorem \ref{stab1}. The arguments are classical, we construct barrier functions which converge to the stationary solution of \eqref{pbp}.
\begin{proof}[Proof of Theorem \ref{stab1}] We split the proof in several steps.\\
\textbf{Step 1.} Barrier functions\\
Let $\mathcal{O}_n:=\{x\in \R^N\,|\, d(x,\Omega)<\frac1n\}$. We consider $\overline u_{0,n}\in W^{s,p}_0(\mathcal{O}_n)$ the supersolution of \eqref{PbLE} constructed as in the proof of Theorem \ref{para1} such that $\overline u_{0,n}\geq u_0$ in $\Omega$.\\ 
We consider now $w_\lambda\in \Wspz \cap C^s(\Rn)$  the solution of \eqref{PbLE} in $\tilde \Omega \subset \Omega$ satisfying  $w_\lambda \leq cd(x,^c\tilde \Omega)^s$, such a solution exists as we take $\lambda\in \Lambda_q$ and assuming in addition $\Omega_0\subset\tilde \Omega$ if $q=p-1$. So we define $\underline{u}_0=\epsilon w_\lambda$,  which is a subsolution of \eqref{PbLE} in $\tilde \Omega$ such that $\underline{u}_0\leq u_0$ {\it a.e.} in $\Omega$ for $\epsilon$ small enough. \\
Let $\overline u_n\in C([0,+\infty),L^m(\mathcal O_n))$ be the global solution of \eqref{pbp} on $[0,\infty)\times \mathcal{O}_n$ obtained by Proposition \ref{para2} with $\overline u_n(0)=\overline u_{0,n}$. In the same way, we consider $\underline{u}\in C([0,+\infty),L^m(\Omega))$ the global solution of \eqref{pbp} with $\underline u(0)=\underline u_0$.\\ 
By construction,  the function $t\mapsto \underline u(t)$ is nondecreasing. Indeed, in the discretization scheme \eqref{approx} with $R$ large enough, 
we have for any $\varphi\in W^{s,p}_0(\tilde\Omega)$, $\varphi\geq 0$:
\begin{equation*}
\int_\Omega\frac{\underline{u}_1-\underline{u}_0}{\dt}\varphi\,dx+\langle \mathcal A \underline{u}_1,\varphi\rangle=\int_\Omega \lambda\underline{u}_0^q \varphi\,dx \geq \langle \mathcal A \underline{u}_0,\varphi\rangle,
\end{equation*}
and using $\underline{u}_1\geq 0$ on $\Omega\backslash\tilde \Omega$ and applying Proposition 2.1 of \cite{MR4913772}, we conclude first that $\underline{u}_1\geq \underline{u}_0$ {\it a.e.} in $\Omega$. By induction, we have $\underline{u}_n\geq \underline{u}_{n-1}$ and hence the approximation functions $\underline u_{\dt}$ and $\underline{\tilde  u}_{\dt}$ are nondecreasing in time. Thus taking $\dt\to 0$, we have $\underline{u}$ nondecreasing. \\
In the same way, considering the discretization scheme \eqref{approx} in $\mathcal O_n$, we establish that, for any $n\in \mathbb N^*$, the mapping $t\mapsto \overline u_n(t)$ is nonincreasing. Moreover, we get $\underline{u}(t,x)\leq u(t,x)\leq \overline{u}_n(t,x)$ for any $(t,x)\in [0,+\infty)\times \Omega$ and for any $n$.\\
\textbf{Step 2.} Convergence in $L^m(\Omega)$  to $u_\lambda$ the solution of \eqref{PbLE}\\
Define $\underline{u}_\infty=\lim_{t\to\infty} \underline u(t)$ and $\overline{u}_{\infty,n}=\lim_{t\to\infty} \overline u_n(t)$, thus $\underline{u}$,  $\overline{u}_n$ converge $\underline{u}_\infty$, $\overline{u}_{\infty,n}$ in $L^m(\Omega)$ for any $m\in [1,+\infty)$ by Dominated Convergence Theorem as $t\to \infty$.\\
In the following, the open set $\mathcal O$ represents either $\mathcal O=\Omega$ or $\mathcal O=\mathcal O_n$.\\
We introduce $\{S(t,\mathcal O)\}_{t\geq0}$ defined on $W^{s,p}_0(\mathcal O)\cap L^\infty(\mathcal O)$ by $S(t,\mathcal O)v_0=v(t)$ where $v$ is the global solution of \eqref{pbp} obtained by Theorem \ref{para1} and Proposition \ref{para2}. By the construction of $v$ in the proof of Proposition \ref{para2}, we have $v\in C([0,\infty),L^1(\mathcal O))$ and for $k\in \mathbb{N}$: $S(t+k\tilde T,\mathcal O)v_0=S(t,\mathcal O)S(k\tilde T,\mathcal O)v_0$ hence if $v(t)$ converges, we have
$$\lim_{k\to \infty} S(t+k\tilde T,\mathcal O)v_0=S(t,\mathcal O)\lim_{k\to \infty} S(k\tilde T,\mathcal O)v_0$$
From this, we deduce  that $\underline{u}_\infty$ is a nonnegative stationary solution of \eqref{PbLE} and by uniqueness, $\underline{u}_\infty=u_\lambda$. Similarly, we obtain $\overline{u}_{\infty,n}$ is the solution of \eqref{PbLE} in $\mathcal{O}_n$.\\
It remains to establish  that $\overline{u}_{\infty,n}$ converges to $u_\lambda$ as $n\to \infty$.
Using Proposition \ref{mono}, the sequence $(\overline{u}_{\infty,n})_n$ is nonincreasing and hence bounded in $L^\infty(\mathcal{O}_1)$ and also in $W^{s,p}_0(\mathcal{O}_1)$ by the variational formulation. Thus we have for any $\phi \in \Wspz\subset W^{s,p}_0(\mathcal{O}_n)$ and for any $n$
$$\langle \pfrac \overline{u}_{\infty,n}, \phi\rangle=\int_\Omega (\lambda \overline{u}_{\infty,n}^q-b\overline{u}_{\infty,n}^r)\phi,$$
and as in the proof of Theorem \ref{thvp}, we pass to the limit in the previous equation and we obtain that $\overline{u}_{\infty,n}\to u_\lambda$ in $L^m(\Omega)$ for any $m\in [1,+\infty)$.\\
By \textbf{Step 1.},  we have for any $n$: 
$$u_\lambda \leq \lim_{t\to + \infty} u(t)\leq \overline{u}_{\infty,n} \ \mbox{{\it a.e.} in } \Omega $$
and hence as $n\to \infty$, we deduce the pointwise convergence of $u(t)$ to $u_\lambda$. Finally, by the $L^\infty(\Omega)$ bounds we conclude that $u$ tends to $u_\lambda$  as $t$ goes to $\infty$ in $L^m(\Omega)$ for any $m$.\\
\textbf{Step 3.} Convergence in $L^\infty(\Omega)$\\
We now take $q>1$ and $u_0\leq Cd(\cdot, \Omega^c)^s$. From the previous estimate of $u_0$, we can choose directly $\widetilde {\mathcal O}=\Omega$  in the proof of Theorem \ref{para1} to build the supersolution. Indeed, we choose $\alpha $ large enough such that $\overline u_0=\alpha u_{\tilde \lambda}$, where $\tilde \lambda \geq\lambda$, is a supersolution of \eqref{PbLE} and $\overline u_0\geq u_0$.\\
By construction, $\overline{u}_0$ and $\underline{u}_0$ belong to $D(\mathcal A)$ hence  we have $\underline{u}(t),\,\overline{u}(t)\in C(\overline \Omega)$ by Proposition \ref{propReg}. Finally Dini Theorem yields that $\underline{u}(t)$, $\overline{u}(t)$ converge to $u_\lambda$ in $L^\infty(\Omega)$.
\end{proof}
\begin{remark}
For the case $q=p-1$ and for any $\lambda\leq\lambda_1(\Omega)$, we obtain a similar result as in Theorem \ref{stab1}, but in this case the limit is $0$. For the proof, we take $0$ as a subsolution and the condition $u_0\geq cd(\cdot,\tilde \Omega^c)^s$ is not necessary.
\end{remark}
\begin{proof}[Proof of Theorem \ref{stabex}]
For $q\leq 1$, the solution $u$ is global and for $q>1$, Remark \ref{tmax} deals with $T_{max}<\infty$. Hence we just consider the case $t\to +\infty$.

We set $\underline{u}_0=\epsilon w$ where $w$ is the positive solution of:
\begin{equation*}
\left\{ \begin{array}{l l}
\pfrac w=\lambda_1(\tilde \Omega) w^{p-1} &\text{in} \;\tilde\Omega,\\
w=0 &\text{in} \; ( \Rn \backslash \tilde \Omega). \\
\end{array}\right.
\end{equation*}
As in the proof of Theorem \ref{stab1}, the solution $\underline u$ of \eqref{pbp} with $\underline u(0,.)=\underline u_0$ in $\Omega$, is nondecreasing in time and satisfies 
$\underline{u}\leq u$.\\
 Let $\ell= \lim_{t\to\infty}\|\underline u(t)\|_{L^\infty(\Omega)}$.  Assume that $\ell< \infty$ , then  $\|\underline u\|_{L^\infty(Q_\infty)}\leq C$ where $Q_\infty=(0,+\infty)\times \Omega$. Thus, as in the proof of Theorem \ref{stab1}, $\underline u$ converges to a stationary solution of \eqref{pbp} for $\lambda\geq \lambda_1(\tilde \Omega)\geq \lambda_1(\Omega_0)$ which contradicts Theorem \ref{thexi1}. Hence $\ell=+\infty$ and we conclude the proof of Theorem \ref{stabex} by comparison.
\end{proof}
\subsubsection{Case  \texorpdfstring{$q>p-1$}{q>p-1}}\ \\
We consider the following problem for $\lambda>0$
\begin{equation*}\label{8}\tag{$\mathbb P_0$}
    \left\{ \begin{array}{l l}
     \partial_t v+ \pfrac v= \lambda v^{q} &\text{in} \;(0,T)\times\Omega_0 ,\\
      v=0 &\text{in} \; (0,T)\times\Omega_0^c,\\
      v\geq 0 &\text{in} \; (0,T)\times\Omega_0,\\
      v(0)=v_0 &\text{in } \Omega_0,
    \end{array}\right.
\end{equation*}
where $v_0\in \Wspzz\cap L^\infty(\Omega_0)$ is a nonnegative function.

Following the proofs of Theorem \ref{exipara} and Proposition \ref{compara} (or by Theorem 1.3 in \cite{MR4913772}), we get that for any $\lambda>0$, for any $v_0\in \Wspzz\cap L^\infty(\Omega)$ a nonnegative function and for $q\geq p-1$, there exists $T_{\diamond}\in (0,+\infty]$ such that, for any $T<T_\diamond$, \eqref{8} admits a weak solution $v$ on $[0,T]\times \Omega_0$ in the sense of Definition \ref{defsol2}. In addition,  if $q\leq 1$, $v$ is a global solution and if $q\geq 1$, the uniqueness holds.
As in Proposition \ref{propReg}, we obtain that $v\in C([0,T),\Wspzz)$ and for any $t_0$, $t_1 \in [0,T)$
\begin{equation}\label{NRJ}
\int_{t_0}^{t_1} \int_{\Omega_0} |\partial_t v|^2dxdt+E_{\Omega_0}(v(t_1)) = E_{\Omega_0}(v(t_0)),
\end{equation}
where $E_{\Omega_0}$ is given by \eqref{Eomega0}. Furthermore, assuming that $E_{\Omega_0}(v_0)<0$ and $q>p-1$, Theorem 1.8 of \cite{MR4913772} implies for that the solution $v$ of \eqref{8} satisfies the following asymptotic properties with respect to $q$: 
\begin{equation*}\label{ppty}\tag{$\mathcal A _q$}
\left\{\begin{array}{l}
    \mbox{ if $q>1$, then $T_{max}<+\infty,$}\\
    \mbox{ if $q=1$, then $\|v(t)\|_{L^2(\Omega_0)}\to+ \infty$ as $t\to +\infty,$}\\
    \mbox{ if $q<1$, then $\sup_{(0,+\infty)}\|v\|_{L^\infty(\Omega_0)}=+\infty$.}
		\end{array} \right.
\end{equation*}
In the case $q=p-1$, we also get the convergence result:
\begin{proposition}
Let $q=p-1$, $\lambda>\lambda_1(\Omega_0)$ and $v_0\in \Wspzz\cap L^\infty(\Omega_0)$ such that $E_{\Omega_0}(v_0)<0$. Then, the solution $v$ of \eqref{8} satisfies $\|v(t)\|_{L^2(\Omega_0)}\to \infty$ as $t\to \infty$ if $q\leq 1$ and $T_{\max} <+\infty$ if $q>1$.
\end{proposition}
\begin{proof}
Let us begin by the case $q\leq 1$. By the energy equality \eqref{NRJ}, for any $t\in [0,T)$, $E_{\Omega_0}(v(t)) \leq  E_{\Omega_0}(v_0)$ 
and so, we take the solution $v$ as test function in the weak formulation of \eqref{8} given by \eqref{fv2} (integrating in $\Omega_0$) and the regularity of $v$ and  Proposition 1.4.29 of \cite{cazenave} imply
$$\frac12d_t\|v(t)\|^2_{L^2(\Omega_0)}=-pE_{\Omega_0}(v(t))\geq -p E_{\Omega_0}(v_0).$$
Finally, by integrating we deduce that $\|v(t)\|^2_{L^2(\Omega_0)}\to \infty$ as $t\to \infty$.
\\
For the case $q>1$, we can see \cite[Theorem 3.5]{li&xie} or \cite[Theorem B]{fujii} and adapt the method for $s\in (0,1)$. 
\end{proof}
Hence we obtain:
\begin{proof}[Proof of Proposition \ref{lemma}]
Let $\psi_{1,\Omega_0}$ be the normalized eigenfunction associated to $\lambda_1(\Omega_0)$. The function $\psi_{1,\Omega_0}$ satisfies in the weak sense
$$\pfrac \psi_{1,\Omega_0}=\lambda_1(\Omega_0)\psi_{1,\Omega_0}^{p-1}\text{ in }\Omega_0$$
and $\psi_{1,\Omega_0}\leq cd(\cdot,\Omega_0^c)$. Hence setting $v_0=\epsilon \psi_{1,\Omega_0}$,  we choose $\epsilon>0$ small enough such that $v_0\leq u_0$ and $E_{\Omega_0}(v_0)<0$. Then, the solution $v$ of \eqref{8} with $v(0,.)=v_0$ satisfies if $q\leq 1$, $\|v(t)\|^2_{L^2(\Omega_0)}\to \infty$ as $t\to\infty$ and if $q>1$, $T_{max}<+\infty$. Finally,  adapting the comparison principle in Remark \ref{compari} for the problem \eqref{8}, we conclude the result of Proposition \ref{lemma}.
\end{proof}
 We establish Theorem \ref{q>p*-1} following the same idea of the previous result.
\begin{proof}[Proof of Theorem \ref{q>p*-1}]
Let $u$ be the solution of \eqref{pbp}, then by Remark \ref{compari}, we have $u\geq v$ in $\Omega_0\times [0,T]$ where $v$ is the solution of \eqref{8} with $v(0)=v_0$ defined by the condition $u\in \mathcal H$.
Thus the  results $(i)-(iii)$ in Theorem \ref{q>p*-1} follow directly from the properties \eqref{ppty}.
\end{proof}
In order to prove Theorem \ref{exploextin} in the case $u_0\in \mathcal H_u$, we use a Sattinger type method involving the set:
$$\mathcal U_{\Omega_0}=\{v\in \Wspzz \;|\;E_{\Omega_0}(v)<m_{\Omega_0} \mbox{ and } I_{\Omega_0}(v)<0\}.$$
By straightforward computations, we have for $v\in \Wspzz\setminus\{0\}$, 
$$E_{\Omega_0}(\theta^*_0 v)=\sup_{\theta>0}E_{\Omega_0}(v\theta)$$
where
\begin{equation}\label{theta}
\theta^*_0=\theta^*_0(v)=\bigg(\frac{\|v\|^p_\Wspzz}{\lambda\|v\|^{q+1}_{L^{q+1}(\Omega_0)}}\bigg)^\frac{1}{q+1-p}\text{ satisfying }I_{\Omega_0}(\theta^*_0v)=0
\end{equation}
and hence from the definition of $m_{\Omega_0}$, we have
$$ m_{\Omega_0}=\bigg(\dfrac{1}{p}-\dfrac{1}{q+1}\bigg)\lambda^{\frac{-p}{q+1-p}}  S_0^\frac{p(q+1)}{q+1-p} \mbox{ where } S_0=\inf_{v\in \Wspzz\backslash\{0\}} \dfrac{\|v\|_\Wspzz}{\|v\|_{_{L^{q+1}(\Omega_0)}}}.$$ 
\begin{remark}
   For $sp<d$, the Sobolev embeddings imply that $S_0>0$ when $q\leq p^*-1$ and $S_0=0$ when $q> p^*-1$. In the latter case, we return to the case described in Theorem \ref{q>p*-1}. For $sp\geq d$, we have $S_0>0$ for any  $q>0$.
\end{remark}
As previously,  we compare the solution of \eqref{pbp} with the solution of \eqref{8} with ordered initial data as in the proof of Theorem \ref{q>p*-1} to prove Theorem \ref{exploextin}. For that,  we establish first the following result for the solutions of \eqref{8}:
\begin{theorem}\label{sattExplo}
Let $v_0\in \mathcal U_{\Omega_0}\cap L^\infty(\Omega_0)$ be a nonnegative function and assume that $S_0>0$. Then, the solution $v$ of \eqref{8} satisfies \eqref{ppty} and  $v(t) \in \mathcal U_{\Omega_0}$ for any $t\in [0,T_\diamond)$.
\end{theorem}
\begin{proof}
Let $T\in [0,T_\diamond)$.  We first show that if $v_0\in \mathcal U_{\Omega_0}$ then $v(t)\in \mathcal U_{\Omega_0}$ for any $t\in [0,T]$. 
From \eqref{NRJ}, we deduce $E_{\Omega_0}(v(t))\leq E_{\Omega_0}(v_0)<m_{\Omega_0}$ and $t\mapsto E_{\Omega_0}(v(t))$ is nonincreasing. 

It remains to get $I(v(t))<0$. We argue by contradiction assuming that there exists $t\leq T$ such that $I_{\Omega_0}(v(t))\geq 0$. Proposition \ref{propReg} holds for the problem \eqref{8} hence $v\in C([0,T);\Wspzz)$ and then there exists $t_0$ such that $I_{\Omega_0}(v(t_0))=0$ which implies $E_{\Omega_0}(v(t_0))\geq m_{\Omega_0}$. We get a contradiction then we conclude $v(t)\in \mathcal U_{\Omega_0}$.\\
We now study the asymptotic behavior of the solution.\\
Since $t\to E_{\Omega_0}(v(t))$ is  nonincreasing, we have from the weak formulation
\begin{equation}\label{10}
\begin{split}
\frac{1}{2}d_t\|v(t)\|^2_{L^2(\Omega_0)}&=-pE_{\Omega_0}(v(t))+\lambda(1-\frac{p}{q+1})\|v(t)\|^{q+1}_{L^{q+1}(\Omega_0)}\\
&\geq-pE_{\Omega_0}(v_0)+\lambda(1-\frac{p}{q+1})\|v(t)\|^{q+1}_{L^{q+1}(\Omega_0)}\\
&\geq -p(1-\tau)m_{\Omega_0}+\lambda(1-\frac{p}{q+1})\|v(t)\|^{q+1}_{L^{q+1}(\Omega_0)},
\end{split}
\end{equation}
where $\tau=1-\frac{E_{\Omega_0}(v_0)}{m_{\Omega_0}}>0$. From \eqref{theta}, we get:
\begin{equation}\label{11}
\begin{split}
-pm_{\Omega_0}= -p\inf_{w\in \Wspzz} E_{\Omega_0}(\theta^*_0 w)&=\lambda(\frac{p}{q+1}-1 )\inf_{w\in \Wspzz}\|\theta^*_0 w\|^{q+1}_{L^{q+1}(\Omega_0)}\\
&\geq \lambda(\frac{p}{q+1}-1 )\|v(t) \|^{q+1}_{L^{q+1}(\Omega_0)}
\end{split}
\end{equation}
since $\theta^*_0<1$ in $\mathcal U_{\Omega_0}$. 
Finally combining \eqref{10} and \eqref{11} we infer that
$$d_t\|v(t)\|^2_{L^2(\Omega_0)}\geq c\|v(t)\|^{q+1}_{L^{q+1}(\Omega_0)}$$
where $c$ is a positive constant depending on $\lambda$, $v_0$, $p$ and $q$.\\
We define the continuous function $Y(t)=\|v(t)\|_{L^2(\Omega_0)}^2$. Thus by the previous inequality, by H\"older inequality and for $q\geq1$, $Y$ satisfies:
\begin{equation}\label{3}
Y'(t)\geq cY(t)^\gamma\  \mbox{ with $\gamma=\frac{q+1}{2}$.} 
\end{equation}
For $q=1$, we have $\gamma=1$ and hence $Y(t)\geq c_0\exp(ct)\to \infty$ as $t\to \infty$, we deduce that the second property in \eqref{ppty} holds.\\
For $q>1$,  we argue by contradiction assuming $T_{max}=+\infty$. Integrating \eqref{3}, we get for any $t>0$, $Y^{1-\gamma}(t)\leq  - C(\gamma-1) t +Y^{1-\gamma}(0)$ which implies that $Y(t)<0$ for $t$ large enough and gives a contradiction.

Finally, for $q<1$, we also argue by contradiction assuming that for any $t\geq 0$, $\|v(t)\|_{L^\infty(\Omega_0)}\leq C$. Then, $Y(t) \leq C^{1-q}\|v\|^{q+1}_{L^{q+1}(\Omega_0)}$ and we obtain $Y'(t)\geq \tilde c Y(t)$. As the case $q=1$, we deduce that $Y(t)$ goes to $\infty$ as $t\to \infty$ which gives a contradiction.
\end{proof}
\begin{theorem}\label{sattExt}
Let $v_0\in \mathcal S_{\Omega_0}\cap L^\infty(\Omega_0)$ be a nonnegative function and assume that $sp>d$. Then, \eqref{8} admits a global solution  $v$ tending to $0$ in $L^m(\Omega_0)$ for any $m\in [1,+\infty)$ as $t\to +\infty$. 
\end{theorem}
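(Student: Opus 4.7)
The plan is a Sattinger-style invariance argument carried out in four linked steps: invariance of the stable set, uniform $\Wspzz$ control from the energy, global existence via the embedding $\Wspzz\hookrightarrow L^\infty(\Omega_0)$ afforded by $sp>d$, and quantitative decay of $\|v(t)\|_{L^2(\Omega_0)}^2$ propagated to every $L^m$ by interpolation.

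First I would show $v(t)\in\mathcal{S}_{\Omega_0}\cup\{0\}$ throughout the lifespan $[0,T_\diamond)$. The energy identity \eqref{NRJ} for \eqref{8} gives $E_{\Omega_0}(v(t))\leq E_{\Omega_0}(v_0)<m_{\Omega_0}$, while $v\in C([0,T_\diamond),\Wspzz)$ (analog of Proposition~\ref{propReg}) makes $t\mapsto I_{\Omega_0}(v(t))$ continuous. If a first time $t_1>0$ occurred with $I_{\Omega_0}(v(t_1))=0$ and $v(t_1)\neq 0$, the optimal scaling \eqref{theta} would force $\theta_0^*(v(t_1))=1$, hence $E_{\Omega_0}(v(t_1))=\sup_{\theta>0}E_{\Omega_0}(\theta v(t_1))\geq m_{\Omega_0}$, a contradiction. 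The decomposition
$$E_{\Omega_0}(v)=\Big(\tfrac1p-\tfrac1{q+1}\Big)\|v\|^p_{\Wspzz}+\tfrac1{q+1}I_{\Omega_0}(v)$$
combined with $I_{\Omega_0}(v(t))>0$ then delivers the uniform bound $\|v(t)\|^p_{\Wspzz}<\tfrac{p(q+1)}{q+1-p}E_{\Omega_0}(v_0)$. Since $sp>d$ gives $\Wspzz\hookrightarrow L^\infty(\Omega_0)$, this upgrades to a uniform $L^\infty$ bound on $v(t)$, and the blow-up alternative for \eqref{8} (analog of Remark~\ref{tmax}) rules out $T_\diamond<+\infty$.

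For the decay, testing \eqref{8} with $v(t)$ gives $\tfrac12\tfrac{d}{dt}\|v(t)\|^2_{L^2(\Omega_0)}=-I_{\Omega_0}(v(t))$. Using Sobolev $\|v\|_{L^{q+1}}\leq C_S\|v\|_{\Wspzz}$ and the uniform bound of the previous step, the strict gap $E_{\Omega_0}(v_0)<m_{\Omega_0}$ provides $\eta>0$ such that $\lambda C_S^{q+1}\|v(t)\|^{q+1-p}_{\Wspzz}\leq 1-\eta$, whence $I_{\Omega_0}(v(t))\geq \eta\|v(t)\|^p_{\Wspzz}\geq c\|v(t)\|^p_{L^2(\Omega_0)}$ after embedding $\Wspzz\hookrightarrow L^p(\Omega_0)$. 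Setting $y(t):=\|v(t)\|^2_{L^2(\Omega_0)}$, the differential inequality $y'\leq -cy^{p/2}$ forces $y(t)\to 0$ (exponentially for $p=2$, polynomially for $p>2$). Finally, interpolating $\|v\|_{L^m}^m\leq \|v\|_{L^\infty}^{m-2}\|v\|_{L^2}^2$ for $m\geq 2$ and using $L^2\subset L^m$ for $m<2$ produces the claimed $L^m$ convergence to $0$.

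The main obstacle is Step 4, namely producing the positive spectral gap $\eta>0$: I have to track the strict inequality $E_{\Omega_0}(v_0)<m_{\Omega_0}$ through the scaling identities expressing $m_{\Omega_0}$ in terms of $C_S$, so as to see that the uniform Sobolev bound from Step 2 lies strictly below the critical threshold $(\lambda C_S^{q+1})^{-1/(q+1-p)}$. A robust fallback, should the constants not align cleanly, is an $\omega$-limit argument: from $\int_0^\infty\|\partial_t v\|^2_{L^2}<+\infty$ and the $\Wspzz$ bound, one extracts $t_n\to+\infty$ with $\partial_t v(t_n)\to 0$ in $L^2$ and $v(t_n)\to v_\infty$ strongly (the embeddings available for $sp>d$ being essential to pass to the limit in $\pfrac$); the limit $v_\infty$ is a stationary solution with $I_{\Omega_0}(v_\infty)=0$, and $v_\infty\neq 0$ would give $E_{\Omega_0}(v_\infty)\geq m_{\Omega_0}$, contradicting the nonincreasing energy, whence $v_\infty=0$.
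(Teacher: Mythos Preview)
Your proposal is correct. The invariance of $\mathcal S_{\Omega_0}$, the uniform $\Wspzz$ bound from the decomposition $E_{\Omega_0}(v)=(\tfrac1p-\tfrac1{q+1})\|v\|^p_{\Wspzz}+\tfrac1{q+1}I_{\Omega_0}(v)$, and the global existence via $\Wspzz\hookrightarrow L^\infty(\Omega_0)$ for $sp>d$ are exactly what the paper does.

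Where you diverge from the paper is in the decay step. The paper takes precisely your ``fallback'' route: it uses the energy identity to get $\int_t^{t+1}\|\partial_t v\|^2_{L^2}\to 0$, translates $v_n(\cdot)=v(\cdot+n)$, extracts a limit $w$ by compactness, passes to the limit in the weak formulation to obtain that $w$ is stationary with $I_{\Omega_0}(w)=0$, and then argues $w\neq 0$ would force $E_{\Omega_0}(w)\geq m_{\Omega_0}$, contradicting the energy bound. Your primary route via the differential inequality $y'\leq -cy^{p/2}$ is a genuinely different and more quantitative argument: it yields explicit decay rates (exponential for $p=2$, algebraic for $p>2$, and in fact finite-time extinction of $\|v\|_{L^2}$ for $p<2$, a case you should mention), whereas the paper's $\omega$-limit argument is softer but requires handling the passage to the limit in the nonlinear term $\langle\pfrac v_n,\phi\rangle$ with only weak-star convergence in $L^\infty(0,T;\Wspzz)$.

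Your worry about the constants in Step~4 is unfounded: the algebra closes exactly. From $\|v(t)\|^p_{\Wspzz}\leq \tfrac{p(q+1)}{q+1-p}E_{\Omega_0}(v_0)$ and the identity $\tfrac{p(q+1)}{q+1-p}m_{\Omega_0}=(\lambda C_S^{q+1})^{-p/(q+1-p)}$ (this is just the formula for $m_{\Omega_0}$ rewritten in terms of the Sobolev constant $C_S$), one obtains
\[
\lambda C_S^{q+1}\|v(t)\|^{q+1-p}_{\Wspzz}\ \leq\ \Big(\tfrac{E_{\Omega_0}(v_0)}{m_{\Omega_0}}\Big)^{(q+1-p)/p}\ =:\ 1-\eta\ <\ 1,
\]
so the spectral gap $\eta>0$ is explicit and your differential inequality follows without needing the fallback.
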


\begin{proof}
let $T>0$ and let $v$ be a weak solution of \eqref{8} on $Q_T$, we note as in the proof of Theorem \ref{sattExplo} that  $v_0\in \mathcal S_{\Omega_0}$ implies $v(t)\in \mathcal S_{\Omega_0}$ for any $t\in [0,T]$.\\
Moreover, by the definition of $\mathcal S_{\Omega_0}$ and by the Sobolev embedding since $sp>d$, we deduce that there exists a constant $C>0$ independent of  $T$ such that 
\begin{equation}\label{sob}
 \sup_{t\in [0,T]}\|v(t)\|_{\Wspzz}+\sup_{t\in [0,T]}\|v(t)\|_{L^\infty(\Omega_0)}\leq C.
\end{equation}
We now prove the existence of a global solution. For $q\leq1$, we  already know the existence of a global solution of \eqref{8}. For $ q>1$, the inequality \eqref{sob}  and Remark \ref{tmax} insure that $T_{max}=+\infty$ and thus the unique solution of \eqref{8} is global.\\
Let $v$ be a global solution of \eqref{8}, then $v$ belongs to $C([0,\infty);\Wspzz)$.\\ 
 Let $T>0$, we define the sequence $(v_n)$ by  $v_n(t, \cdot)=v(t+nT,\cdot)$ for any $t\in[0,T]$.\\
By the definition of $\mathcal S_{\Omega_0}$, we have for any $t\in [0,\infty)$
$$E_{\Omega_0}(v(t))=\frac{1}{p}I_{\Omega_0}(v(t))+(\frac{1}{p}-\frac{1}{q+1})\|v(t)\|^{q+1}_{L^{q+1}(\Omega_0)}>0$$
and  the mapping $t\to E_{\Omega_0}(v(t))$ is nonincreasing  thus $E_{\Omega_0}(v)$ admits a finite limit as $t\to \infty$. Hence we deduce from \eqref{NRJ} that
\begin{equation}\label{12}
\lim_{n\to \infty} \int_{nT}^{(n+1)T}\|\partial_t v\|_{L^2(\Omega_0)}^2\,d\tau=\lim_{n\to \infty} \int_{0}^{T}\|\partial_t v_n\|_{L^2(\Omega_0)}^2\,d\tau= 0.
\end{equation}
From \eqref{sob} and \eqref{12}, $(v_n)_n$ is bounded in $L^\infty(0,T;\Wspzz)\cap L^\infty([0,T]\times\Omega_0)$ and $(\partial_tv_n)_n$ is bounded in $L^2(0,T;L^2(\Omega_0))$. Thus, using compactness arguments, there exists $w$ such that, up to a subsequence, 
\begin{equation}\label{conver}
\begin{array}{l}
v_n \overset{\ast}{\rightharpoonup} w \mbox{ in } L^\infty(0,T;\Wspzz)\\
v_n \to w \mbox{ in } C([0,{T}],L^m(\Omega)) \mbox{ for any } m\in [1,\infty)
\end{array}
\end{equation}
Indeed, the Aubin-Simon Theorem yields the strong convergence, for more details see for instance Step 4 of the proof of Theorem 3.1 of \cite{MR4913772}. 
We also have $\mbox{for any } \phi \in L^1(0,T;\Wspzz),\mbox{ for any } t\in [0,T]$:
$$\int_0^{t}\langle\pfrac v_n-\pfrac w, \phi\rangle \,dt\to 0. $$

Moreover, from \eqref{12}, we deduce that $w$ does not depend on time.\\
Let now show that $w=0$ {\it a.e.} in $\Omega$. For any $t\in [0,T]$, 
we have
$$\int_0^{t}\int_{\Omega_0}\partial_t v_n w \,dxds+\int_0^{t}\langle\pfrac v_n,w\rangle \,dt=\int_0^{t}\int_{\Omega_0}\lambda v_n^qw\,dxds,$$
and passing the limit as $n\to \infty$, \eqref{12} implies that 
$$\int_0^t I_{\Omega_0}(w) \,ds=0,$$
hence  
$I_{\Omega_0}(w)=0$. 
Assume that $w\neq 0$, then $ E_{\Omega_0}(w)\geq m_{\Omega_0}$ and in other hand, since $w$ is independent of $t$ we can  write by \eqref{conver}
\begin{equation*}
   \begin{split}
        E_{\Omega_0}(w)&= \frac1p\|w\|^p_{L^\infty(0,T;\Wspzz)}-\frac{\lambda}{q+1} \| w\|_{L^\infty(0,T;L^{q+1}(\Omega_0))}^{q+1}\\
        &\leq \liminf \frac1p \|v_n\|^p_{L^\infty(0,T;\Wspzz)}+\lim_{n\to \infty} -\frac{\lambda}{q+1}\| v_n\|_{{L^\infty(0,T;L^{q+1}(\Omega_0))}}^{q+1}.
             \end{split}
\end{equation*}
By continuity of $v$, there exists $t_{0,n}\in [0,T]$ such that $ \|v_n(t_{0,n})\|_{\Wspzz}=\sup_{t\in [0,T]} \|v_n(t)\|_{\Wspzz}$. Hence, we get 
\begin{equation*}
\begin{split}
         E_{\Omega_0}(w)&\leq \liminf \frac1p \|v_n(t_{0,n})\|^p_{\Wspzz}+\lim_{n\to \infty} -\frac{\lambda}{q+1}\| v_n(t_{0,n})\|_{L^{q+1}(\Omega_0)}^{q+1}\\
         &\leq \liminf \left( E_{\Omega_0}(v_n(t_{0,n}) \right)<m_{\Omega_0}
\end{split}         
\end{equation*}
which is absurd. Thus $w=0$.\\
Let us now consider $\widetilde w$ to be another limit point of the sequence $(v_n)$ in $C([0,T];L^m(\Omega_0)$. Then, up to a new subsequence, \eqref{conver} holds with $\widetilde w$ as limit, as do the subsequent calculations. Thus it follows that $\widetilde w = 0$ is the unique limit point of $(v_n)$.\\
Finally, the sequence $(v_n)$ converges to 0 in $C([0,T];L^m(\Omega_0))$, for any $m\in [1,\infty)$ and thus, we deduce that that $v(t)$ goes to $0$ in $L^m(\Omega_0)$ as $t\to \infty$.
\end{proof}

\begin{proof}[Proof of Theorem \ref{exploextin}]
Assume $u_0\in \mathcal H_u$, then there exists $v_0\in \mathcal U_{\Omega_0}$ such that $u_0\geq v_0$. Then, Remark \ref{compari} implies that  $u\geq v$ in $[0,T]\times\Omega_0 $ where $v$ is the solution of \eqref{8} with $v(0)=v_0$. Hence {\it (i)-(iii)} comes from Theorem \ref{sattExplo}.

Now assume  $u_0\in \mathcal H_s$, then there exists $w_0\in \mathcal S_{\Omega}$ such that $u_0\leq w_0$. Then, replacing $\Omega_0$ by $\Omega$, Theorem \ref{sattExt} yields that the global solution $w$  of \eqref{8} such that $w(0)=w_0$ in $\Omega$ converges to $0$ in $L^r(\Omega)$ for any $r\geq 1$. Finally, we use again Remark \ref{compari} to conclude the proof.
\end{proof}
\subsection*{Acknowledgment}
The authors would like to thank Professor Jacques Giacomoni for our discussions about the problem. The authors would also like to thank the anonymous referee for their careful reading of this manuscript, and for their suggestions which have helped to improve it.

\begin{appendices}
\section{ }
\begin{proposition}\cite{simon,kato}\label{inegalg}
There exist $c_1,c_2$ positive constants such that for any $\xi , \eta \in \mathbb{R}^d$:
\begin{align}\label{ineg1}\tag{\ref{inegalg}.1}
||\xi|^{p-2}\xi -|\eta|^{p-2}\eta|\leq c_1 \left\{ \begin{array}{lr}  |\xi -\eta |(|\xi|+|\eta|)^{p-2} & \text{if } p\geq 2, \\
|\xi-\eta|^{p-1} & \text{if } p \leq 2,
\end{array} \right.
\end{align}

\begin{align}\label{ineg2}\tag{\ref{inegalg}.2}
(|\xi|^{p-2}\xi -|\eta|^{p-2}\eta).(\xi-\eta) \geq c_2 \left\{ \begin{array}{lr}  |\xi -\eta |^{p} & \text{if } p\geq 2, \\
\dfrac{|\xi-\eta|^{2}}{(|\xi|+|\eta|)^{2-p}} & \text{if } p \leq 2.
\end{array} \right.
\end{align}
\end{proposition}
\begin{proposition}\label{accre}
The operator $\mathcal A$ is accretive in $L^\infty(\Omega)$ in the sense of Definition 3.1 of \cite{barbu2}.
\end{proposition}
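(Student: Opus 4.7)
The accretivity condition in the sense of Barbu amounts to showing that for every $u_1,u_2\in D(\mathcal A)$ and every $\mu>0$,
\begin{equation*}
\|u_1-u_2\|_{L^\infty(\Omega)}\leq \|(u_1-u_2)+\mu(\mathcal Au_1-\mathcal Au_2)\|_{L^\infty(\Omega)}.
\end{equation*}
Setting $f_i:=u_i+\mu\mathcal Au_i\in L^\infty(\Omega)$ and $M:=\|f_1-f_2\|_{L^\infty(\Omega)}$, my plan is to prove by a Stampacchia-type truncation that $u_1-u_2\leq M$ a.e.\ in $\Omega$; the symmetric bound then completes the result. The candidate test function is $\phi:=(u_1-u_2-M)_+$, which belongs to $\Wspz$ and is nonnegative, since $u_1,u_2$ vanish outside $\Omega$, $M\geq 0$, and $z\mapsto(z-M)_+$ is $1$-Lipschitz.

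First, because $u_i\in D(\mathcal A)$, the relation $\mu\mathcal Au_i=f_i-u_i$ holds pointwise a.e.\ in $\Omega$; pairing against $\phi\in\Wspz$ yields
\begin{equation*}
\mu\langle\mathcal Au_1-\mathcal Au_2,\phi\rangle=\int_\Omega\bigl[(f_1-f_2)-(u_1-u_2)\bigr]\phi\,dx\leq -\int_\Omega\phi^2\,dx,
\end{equation*}
the inequality coming from $f_1-f_2\leq M$ and from $u_1-u_2-M=\phi$ on $\{\phi>0\}$. The heart of the proof is the opposite sign $\langle\mathcal Au_1-\mathcal Au_2,\phi\rangle\geq 0$. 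The zero order term $\int_\Omega b\bigl(|u_1|^{r-1}u_1-|u_2|^{r-1}u_2\bigr)\phi\,dx$ is nonnegative because $b\geq 0$, the map $t\mapsto|t|^{r-1}t$ is nondecreasing, and $u_1>u_2$ on $\{\phi>0\}$. For the nonlocal part, writing $U:=u_1-u_2$ we have $\phi=T(U)$ with the monotone nondecreasing truncation $T(z):=(z-M)_+$; consequently $\phi(x)-\phi(y)$ has the same sign as $U(x)-U(y)=(u_1(x)-u_1(y))-(u_2(x)-u_2(y))$, and by monotonicity of $g(t):=|t|^{p-2}t$ so does $g(u_1(x)-u_1(y))-g(u_2(x)-u_2(y))$. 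Hence the integrand of the resulting double integral over $\Rn\times\Rn$ is pointwise $\geq 0$.

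Combining the two estimates forces $\int_\Omega\phi^2\,dx=0$, whence $\phi\equiv 0$ a.e. and $u_1-u_2\leq M$ a.e.\ in $\Omega$. Exchanging the roles of $u_1$ and $u_2$ gives $|u_1-u_2|\leq M$ a.e., which is the required accretivity. The main delicacy lies in the sign analysis for the fractional $p$-Laplacian, but the choice $\phi=(u_1-u_2-M)_+$ makes $\phi$ a pointwise monotone truncation of $U$, so the sign matching in the double integral proceeds uniformly for every $p\in(1,+\infty)$, without needing the case distinction of Property \ref{inegalg}.
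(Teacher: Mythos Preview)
Your proof is correct and takes a genuinely different route from the paper. The paper tests the resolvent identity against $\phi=|u_1-u_2|^{q-1}(u_1-u_2)$ for $q>1$, obtains $\|u_1-u_2\|_{L^{q+1}(\Omega)}\leq\|f_1-f_2\|_{L^{q+1}(\Omega)}$, and then lets $q\to\infty$ to reach the $L^\infty$ estimate. Your Stampacchia truncation $\phi=(u_1-u_2-M)_+$ reaches the $L^\infty$ bound in one step, without a limiting argument and without needing Property~\ref{inegalg} to justify that the power $|u_1-u_2|^{q-1}(u_1-u_2)$ lies in $\Wspz$. The paper's approach, on the other hand, yields the full scale of $L^{q+1}$ resolvent contractions along the way, which is additional information even if it is not used elsewhere. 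Both arguments rest on the same monotonicity observation $\langle\mathcal Au_1-\mathcal Au_2,\phi\rangle\geq 0$; your sign analysis for the nonlocal double integral via the monotone truncation $T(z)=(z-M)_+$ is clean and works uniformly in $p$.
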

\begin{proof}
Let $\lambda>0$ and let $u,\,v \in D(\mathcal A)$ such that $u+\lambda \mathcal A u= f$ and $v+\lambda \mathcal Av=g$. Since $u,\, v\in \Wspz\cap L^\infty(\Omega)$, by \eqref{ineg1}, $\phi=|u-v|^{m-1}(u-v)$ belong to $\Wspz \cap L^\infty(\Omega)$ for any $m>1$. We have by direct computation:
$$\langle \mathcal A u-\mathcal Av,\phi\rangle\geq 0.$$
So taking the test function $\phi$ in $u-v+\mathcal Au -\mathcal Av=f-g$, this yields 
$$\|u-v\|_{L^{m+1}(\Omega)}^{m+1}=\int_\Omega(f-g)\phi\leq \|f-g\|_{L^{m+1}(\Omega)}\|u-v\|_{L^{m+1}(\Omega)}^{m},$$
which gives, when $m\to \infty$, $\|u-v\|_{L^\infty(\Omega)}\leq \|f-g\|_{L^\infty(\Omega)}$ giving accretive in $L^\infty(\Omega)$.
\end{proof}
\end{appendices}

\end{document}